\documentclass[12pt,oneside]{amsart}

\usepackage{amssymb}
\usepackage{amsmath}
\usepackage{mathrsfs}
\usepackage{verbatim}
\usepackage{lipsum}
\usepackage[retainorgcmds]{IEEEtrantools}

\usepackage[margin=30 mm,heightrounded=true,centering]{geometry}
\setlength{\marginparwidth}{2.1cm}




 \theoremstyle{plain}
 \newtheorem{theorem}{Theorem}[section]
 \newtheorem{lemma}[theorem]{Lemma}
 \newtheorem{question}[theorem]{Question}
 \newtheorem{corollary}[theorem]{Corollary}
 \newtheorem{proposition}[theorem]{Proposition}
      
 \theoremstyle{definition}
 \newtheorem{definition}[theorem]{Definition}
      
 \theoremstyle{remark}

\makeatletter
\def\@setcopyright{}
\def\serieslogo@{}
\makeatother

\begin{document}

\author{Ran Ji}

\title{On The Martin Boundary of rank $1$ manifolds with nonpositive curvature}

\newcommand{\Addresses}{{
  \bigskip
  \footnotesize

  \textsc{Yau Mathematical Sciences Center,
Tsinghua University, Beijing, China 100084}\par\nopagebreak
  \textit{E-mail:} \texttt{rji@mail.tsinghua.edu.cn}

}}

\begin{abstract}
For a manifold with nonpositive curvature, the Martin boundary is described by the behavior of normalized Green's functions at infinity. A classical result by Anderson and Schoen states that if the manifold has pinched negative curvature, the geometric boundary is the same as the Martin boundary. In this paper, we study the Martin boundary of rank $1$ manifolds admitting compact quotients. It is proved that a generic set in the geometric boundary can be identified naturally with a subset of the Martin boundary. This gives a partial answer to one of the open problems in geometry collected by Yau.
\end{abstract}

\maketitle

\section{Introduction}

In this paper we study compactifications of noncompact Riemannian manifolds.

Consider a  complete, simply connected $n$-dimensional Riemannian manifold $M$ with nonpositive sectional curvature. Fix a base point $p \in M$. It is well known that the exponential map at $p$ induces a diffeomorphism between $T_p M$ and $M$. $M(\infty)$, which is defined as the set of equivalence classes of geodesic rays, can be identified with the unit sphere $\mathbb{S}^{n-1}$. A basic fact is that $\overline{M}=M \cup M(\infty)$ with the `cone topology' is a compactification of $M$ \cite{MR0336648}.

On a non-parabolic manifold $M$, i.e., a manifold that possesses an entire Green's function for the Laplacian, Martin \cite{MR0003919} introduced another way to compactify $M$ by attaching equivalent normalized Green's functions. The set of equivalence classes of normalized Green's functions is called the Martin boundary of $M$ and is denoted by $\mathcal{M}$. The `Martin topology' on $\widetilde{M}=M \cup \mathcal{M}$ is compact and induces the topology on $M$.

In 1985, Anderson and Schoen proved that on a manifold with pinched negative curvature, the Martin boundary of $M$ can be identified with the geometric boundary.

\begin{theorem}(\cite{MR794369})
\label{AS}
Let $M$ be a complete, simply connected Riemannian manifold whose sectional curvature satisfies $-b^2 \leq K_M \leq -a^2<0$. Then there exists a natural homeomorphism $\Phi: \widetilde{M} \to \overline{M}$ from the Martin compactification to the geometric compactification which is the identity on $M$.
\end{theorem}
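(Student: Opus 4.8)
The plan is to construct the homeomorphism directly: send a point $\xi\in M(\infty)$ to the limit of the normalized Green's functions $K_y=G(\cdot,y)/G(p,y)$ as $y\to\xi$ in the cone topology, declare the map to be the identity on $M$, check that the resulting map $\Psi\colon\overline M\to\widetilde M$ is a homeomorphism, and set $\Phi=\Psi^{-1}$. \emph{Step 1 (analytic toolkit).} I would first assemble the consequences of two-sided pinching. The Harnack inequality holds for positive harmonic functions on balls of a fixed radius with a constant depending only on $n,a,b$, and with interior Schauder estimates this makes any family of positive harmonic functions normalized at $p$ precompact in $C^2_{\mathrm{loc}}$. Radial comparison with the space forms of curvature $-a^2$ and $-b^2$ shows that $e^{-\lambda d(\cdot,q)}$ is superharmonic for $0<\lambda<(n-1)a$ and subharmonic for $\lambda>(n-1)b$ away from $q$; these are the exponential barriers. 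The same comparison gives that $M$ is non-parabolic (so $G$ exists), that $\overline M$ with the cone topology is a metrizable compactification, and that distinct geodesic rays diverge at an exponential rate, so that ``continuous vanishing at a single boundary point'' is a stable condition.

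\emph{Step 2 (boundary Harnack --- the crux).} The technical heart is an estimate of the following shape: there are constants $C\ge1$ and $\alpha>0$ depending only on $n,a,b$ so that, for any boundary ball $B=B(\eta,r)\subset M(\infty)$ and any positive harmonic $u$ on $M$ vanishing continuously on $B$, one has $u(x)\le Ce^{-\alpha t}\,u(x_r)$ whenever $x$ lies in the cone over $B$ at cone-distance $t$ from $\partial B$, $x_r$ being a fixed reference point; and, consequently, any two such functions $u_1,u_2$ vanishing on $B$ are comparable on the cone over $B(\eta,r/2)$. This is proved by iterating the maximum principle against the superharmonic barriers of Step 1 on a nested sequence of ``shells'', the upper curvature bound forcing each iteration to contract by a definite factor, while the lower bound produces matching lower bounds so that the comparability is genuinely two-sided. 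I expect this step to be the main obstacle: patching the barriers across nested regions, and keeping all constants uniform in $\eta$ and $r$, is where the real work lies and where both curvature hypotheses are essential.

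\emph{Step 3 (definition of $\Psi$).} Fix $\xi\in M(\infty)$ and a sequence $y_i\to\xi$ in the cone topology. By Step 1, $K_{y_i}$ subconverges in $C^2_{\mathrm{loc}}$ to a positive harmonic $h$ with $h(p)=1$. Given any $\eta\ne\xi$, the poles $y_i$ eventually sit deep in the cone over a boundary ball around $\eta$ that omits $\xi$, so Step 2 forces $K_{y_i}(x)\to0$ uniformly for $x$ in the cone over a smaller ball around $\eta$; hence $h$ extends continuously to $\overline M\setminus\{\xi\}$ with boundary value $0$. The comparability half of Step 2, together with a maximum-principle argument of the classical boundary-uniqueness type, shows that any two positive harmonic functions that equal $1$ at $p$ and vanish on $M(\infty)\setminus\{\xi\}$ coincide; therefore $K_{y_i}$ converges --- with no need to pass to a subsequence --- to a single limit $K_\xi\in\widetilde M$. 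Setting $\Psi(\xi)=K_\xi$ and $\Psi=\mathrm{id}$ on $M$ defines $\Psi\colon\overline M\to\widetilde M$, and a diagonal argument applied to the convergence just established shows $\Psi$ is continuous.

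\emph{Step 4 ($\Psi$ is a homeomorphism).} Injectivity: for $\xi\ne\eta$, $K_\xi$ is bounded near $\eta$ whereas $K_\eta$ is not, the poles defining it accumulating at $\eta$, so $K_\xi\ne K_\eta$. Surjectivity onto $\mathcal M$: any Martin kernel is a limit of $K_{y_i}$ for some $y_i\to\infty$ in $M$; passing to a cone-topology limit point $\xi\in M(\infty)$ of the $y_i$ and invoking the uniqueness from Step 3 identifies that kernel with $K_\xi$, while the same estimate shows each $K_\xi$ is a minimal positive harmonic function, so $\mathcal M$ is exactly the minimal Martin boundary and nothing is missed. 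Thus $\Psi$ is a continuous bijection; since $\overline M$ and $\widetilde M$ are compact and Hausdorff, $\Psi$ is a homeomorphism, and $\Phi:=\Psi^{-1}$ is the asserted homeomorphism, equal to the identity on $M$.
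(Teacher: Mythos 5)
This theorem is cited, not proved, in the paper; there is no in-text proof to compare against. Your sketch faithfully reproduces the structure of Anderson--Schoen's original argument --- exponential barriers from two-sided pinching, a boundary Harnack inequality proved by iterating the maximum principle against those barriers in nested shells, convergence and uniqueness of the normalized Green's functions, and a continuous-bijection-between-compact-Hausdorff-spaces finish --- and that is exactly the proof the paper attributes to \cite{MR794369}. So as a blueprint it is sound, and the one place you flag as ``the main obstacle'' (uniform constants in the shell iteration) is indeed where the real work sits.

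Worth noting, though, since it bears on the rest of this paper: the technical machinery the author actually develops in Sections 5--7 for Theorem \ref{main} is Ancona's potential-theoretic variant of the boundary Harnack inequality, not Anderson--Schoen's barrier method. Ancona replaces the explicit superharmonic barriers $e^{-\lambda d(\cdot,q)}$ by a comparison between $\Delta$- and $(\Delta+tI)$-Green's functions and harmonic measures (Lemma \ref{measure comparison} and Corollary \ref{measure comparison 2}), which buys two things your sketch would not: it works for coercive operators with merely measurable coefficients, and it decouples the ``contraction per unit step'' from pointwise curvature bounds, which is precisely what lets the present paper push the argument past flat pieces where no barrier of the form $e^{-\lambda r}$ is available. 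Your Step 2, by contrast, leans on $K_M\leq -a^2<0$ at every point to make each shell iteration contract, and that hypothesis is exactly what is absent in the rank-$1$ setting. So your proof is correct for the cited theorem, but it is not the route this paper would (or could) take for its own results.

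Two small points of rigor worth tightening if you flesh this out: in Step 3, ``any two positive harmonic functions $=1$ at $p$ vanishing on $M(\infty)\setminus\{\xi\}$ coincide'' needs the extremal-multiple argument (take $c_0=\sup\{c:u_1\geq cu_2\}$, show $u_1-c_0u_2$ is either $0$ or again a vanishing positive harmonic function, and derive a contradiction from boundary Harnack), not just ``a maximum-principle argument''; and in Step 4, injectivity should be phrased as $K_\xi$ vanishing continuously at $\eta$ while $K_\eta$ does not (rather than ``bounded vs.\ unbounded'', since $K_\eta$ is in fact bounded on cones away from $\eta$).
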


To prove Theorem \ref{AS}, Anderson and Schoen established the `boundary Harnack inequality', the main tool in the study of Martin boundary, to estimate the growth of positive harmonic functions that vanish continuously at infinity in cones. In 1987, using measure theory, Ancona \cite{MR890161} was able give a simpler proof of the boundary Harnack inequality and generalize it to elliptic weakly coercive operators with measurable and bounded coefficients.

In general, the boundary Harnack inequality does not hold on a manifold containing flat strips. In \cite{MR1847509}, the author constructs certain manifolds with nonpositive curvature on which the boundary Harnack inequality fails. However, it is proved that the Martin boundary is still the same as the geometric boundary. Given an additional assumption that $M$ admits a rank $1$ compact quotient, Yau asked the following question in \cite{MR1216573}.

\begin{question}
\label{Yau}
What is the Martin boundary of the universal cover of a rank $1$ compact manifold with nonpositive curvature?
\end{question}

In accordance with the case of pinched negative curvature, Ballmann \cite{MR990144} proved the solvability of the asymptotic Dirichlet problem for such a manifold. Furthermore, it is shown in \cite{MR1269841} that the Poisson boundary, which can be regarded as a subset of the Martin boundary, is naturally isomorphic to the geometric boundary. There are many other classical results for manifolds with pinched negative curvature which carry over to rank $1$ manifolds admitting compact quotients, see, e.g.,\cite{MR1465601}. Hence it is natural to expect that the geometric boundary coincides with the Martin boundary in this case. When $M$ is $3$-dimensional, we are able to show that this is true for at least a generic set in $M(\infty)$, i.e., a set contains a countable intersection of open and dense sets. To be precise, we prove the following theorem.
\begin{theorem}
\label{main}
Let $M$ be a complete, simply connected $3$-dimensional Riemannian manifold with nonpositive curvature. Suppose that $M$  admits a discrete group of isometries $D$ such that $M/D$ is a compact rank $1$ manifold. Then there exits a generic set $R \subset M(\infty)$ such that for any $\xi \in R$ and $\{y_n\}$ any sequence in $M$ converging to $\xi$ in the cone topology, the normalized Green's functions $h_{y_n}=\dfrac{G(x,y_n)}{G(p,y_n)}$ converges to a unique limiting function $h_\xi$ which is independent of the sequence $\{y_n\}$. Moreover, $h_\xi$ vanishes on $M(\infty) \setminus \{\xi\}$ and therefore $h_\xi \neq h_{\xi'}$ for distinct $\xi,\xi' \in R$.
\end{theorem}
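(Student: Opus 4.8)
The plan is to transplant the Anderson--Schoen scheme, and its streamlining by Ancona, from the pinched negative curvature setting to the rank $1$ setting, replacing the hypothesis $-b^2\le K_M$ by the fact that a rank $1$ geodesic in a compact quotient necessarily recurs to the region where the geodesic flow is uniformly contracting --- which is precisely where strict negativity of the curvature is used in the classical proof.

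First I would fix the generic set. Call a unit vector, and the geodesic it generates, \emph{rank $1$} if that geodesic bounds no flat half-plane. Since $M/D$ is compact the rank is upper semicontinuous along sequences of geodesics (a nonzero parallel perpendicular Jacobi field along a limit geodesic is produced, by Arzel\`a--Ascoli in the unit tangent bundle of $M/D$, from such fields along the approximating geodesics), so the set of rank $1$ vectors is open and $D$-invariant; since $M/D$ is a rank $1$ manifold it is also dense, the axes of rank $1$ axial isometries, and the rays joining $p$ to their endpoints, forming a dense family. Let $R\subset M(\infty)$ be the set of $\xi$ for which the ray $\gamma_\xi$ from $p$ to $\xi$ is rank $1$ and satisfies, in addition, a suitable recurrence condition to a fixed compact set of uniformly contracting vectors. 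Using the openness and density above, the density of rank $1$ periodic geodesics, and a Baire category argument, one checks that $R$ contains a countable intersection of open dense sets, i.e.\ is generic. Dimension $3$ enters here, and decisively in the estimates below: a non--rank-$1$ geodesic lies in a $2$-flat, and in a compact rank $1$ $3$-manifold such flats are isolated and finitely many up to $D$, so the locus in $\overline M$ along which the boundary Harnack inequality can fail is thin and of codimension one.

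Next, the soft part: since $h_y$ is positive harmonic on $M\setminus\{y\}$ with $h_y(p)=1$, the Harnack inequality together with interior elliptic estimates make $\{h_y:y\notin B\}$ uniformly bounded and equicontinuous on each $B'\Subset B$, so any sequence $y_n\to\xi$ has a subsequence along which $h_{y_n}$ converges locally uniformly to a positive harmonic $h$ with $h(p)=1$. Everything then reduces to three assertions about such limits: (i) $h$ does not depend on the subsequence or on $\{y_n\}$; (ii) $h(x)\to 0$ as $x\to\eta$ for every $\eta\in M(\infty)\setminus\{\xi\}$; (iii) $h(x)\to+\infty$ as $x\to\xi$. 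The heart of the argument is a local boundary Harnack inequality at $\xi\in R$ giving all three. Fix $\xi\in R$ and a truncated cone $\mathcal C\subset\overline M$ about $\xi$. At the recurrence times of $\gamma_\xi$ the flow contracts nearby geodesics toward $\gamma_\xi$ exponentially, so $d(\cdot,\gamma_\xi)$ is convex and, in the relevant directions and at those times, uniformly convex; a suitable function of it is therefore a superharmonic barrier, and the Anderson--Schoen maximum-principle argument on a nested family of cones yields constants $A\ge 1$, $\alpha>0$ with the Carleson-type bound $u(x)\le A\,u(x_0)\,e^{-\alpha\,d(x,\gamma_\xi)}$ for every positive harmonic $u$ on $\mathcal C$ vanishing on $\mathcal C\cap M(\infty)$, together with the companion Ancona-type estimate that $G(z,x)$ is comparable, up to a fixed multiplicative constant, to $G(z,w)\,G(w,x)$ whenever $w$ lies between $z$ and $x$ along $\gamma_\xi$. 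The main obstacle --- the step I expect to demand the most work --- is controlling these comparisons across the flat stretches of $\gamma_\xi$ and the $2$-flats passing near $\xi$, exactly where the inequality fails in general; here I would use that in a $3$-manifold such a flat is $2$-dimensional, hence of codimension one, so that near it the barrier may be taken affine in the single transverse direction and contributes no growth, while isolatedness of the flats keeps the total contribution of the excursions under control.

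Granting this, the conclusion is classical. For two subsequential limits $h,h'$ obtained from sequences tending to $\xi$, the poles eventually leave $\mathcal C$, so $h$ and $h'$ are positive harmonic on $\mathcal C$ and, by the Carleson estimate, vanish on $\mathcal C\cap M(\infty)$ and obey a two-sided bound $c\le h/h'\le C$ there; the Ancona/Anderson--Schoen argument then shows $h/h'$ tends to a limit $\ell\in(0,\infty)$ as $x\to\xi$, and $h-\ell h'$, a difference of positive harmonic functions vanishing on all of $M(\infty)$ with controlled growth at $\xi$, must vanish identically, so $\ell=1$ and $h=h'$. Thus $h_\xi:=\lim_n h_{y_n}$ is well defined and, by the same computation, vanishes on $M(\infty)\setminus\{\xi\}$, proving (i) and (ii). For (iii), the Ancona estimate gives $h_{y_n}(x)=G(x,y_n)/G(p,y_n)$ comparable to $e^{-\alpha(\ell(x,y_n)-\ell(p,y_n))}$, where $\ell(\cdot,\cdot)$ measures the contracting portion of the connecting geodesic; as $y_n\to\xi$ this exponent tends to $-\alpha\,B_\xi(x)$ with $B_\xi$ the Busemann function normalized by $B_\xi(p)=0$, and $B_\xi(x)\to-\infty$ as $x\to\xi$, whence $h_\xi(x)\to+\infty$. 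Finally, if $\xi\ne\xi'$ were points of $R$ with $h_\xi=h_{\xi'}$, then $h_{\xi'}$, which vanishes on $M(\infty)\setminus\{\xi'\}$, would vanish at $\xi$, contradicting $h_\xi(x)\to+\infty$ as $x\to\xi$; hence $h_\xi\ne h_{\xi'}$.
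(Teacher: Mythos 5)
Your high-level plan (geodesic-flow recurrence to a ``hyperbolic'' region, boundary Harnack there, Ancona/Anderson--Schoen bookkeeping) is the same as the paper's, and the soft parts (compactness of $\{h_y\}$, the final step from the Harnack inequality to uniqueness and vanishing on $M(\infty)\setminus\{\xi\}$) are essentially what the paper does in Section~7. But the hard step is handled incorrectly. You propose to build an Anderson--Schoen barrier out of $d(\cdot,\gamma_\xi)$ and to treat the flat stretches by taking the barrier ``affine in the single transverse direction,'' asserting that such stretches ``contribute no growth.'' This does not produce a barrier: on a flat an affine function is harmonic, not strictly superharmonic, so it gives no comparison at all; and the flat directions very much \emph{do} matter --- this is precisely the regime where the boundary Harnack inequality can genuinely fail (the paper cites a counterexample). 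Nothing in ``recurrence to a uniformly contracting compact set'' bounds the time $\gamma_\xi$ spends near flats, so the decay estimate $u(x)\le A\,u(x_0)e^{-\alpha\,d(x,\gamma_\xi)}$ is not justified. The paper's mechanism is different and is where the $3$-dimensional/rank $1$ hypotheses earn their keep: Lemma~\ref{half flat} produces a flat plane $F$ and a half-space $M^+$ such that no geodesic of $F$ bounds a flat half-plane in $M^+$; Lemmas~\ref{rectangle} and~\ref{sector} then show that embedded squares and sectors in $M^+$ perpendicular to $F$ with one side on $F$ have total Gaussian curvature bounded away from zero, which yields a \emph{linear} lower bound $\eta(r-jR_0)$ on the spherical divergence of nested cones over $F$. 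That quantitative divergence, not a barrier, is what drives the induction in Lemma~\ref{Harnack cone} and hence Proposition~\ref{proposition}; the paper then uses Ancona's measure-theoretic chain-of-cones argument (via Lemmas~\ref{measure comparison} and \ref{funtion-measure}, i.e.\ comparing $\Delta$- and $(\Delta+\varepsilon I)$-harmonic measures), not a maximum-principle barrier.

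Two further discrepancies worth flagging. First, the paper's generic set is not ``rank $1$ rays plus a recurrence condition'' in the abstract: Proposition~\ref{proposition} and Theorem~\ref{Harnack final} are proved for one specific pair of cones attached to the normal $\upsilon$ of the chosen flat $F$, and $R$ is then the set of endpoints of rays whose geodesic flow orbit enters $D\cdot V$ infinitely often, where $V$ is a small neighborhood in $SM$ of a vector $\omega_0$ to which that concrete inequality applies; genericity is the Baire statement $\bigcap_i V_i$. Your softer description would still need the key inequality proved for an open set of geodesics through a flat, which is exactly what your barrier argument does not deliver. Second, your step (iii), that $h_\xi(x)\to+\infty$ as $x\to\xi$ with Busemann asymptotics, is not established in the paper and is not needed: once $h_\xi$ vanishes on $M(\infty)\setminus\{\xi\}$, two distinct $\xi,\xi'\in R$ already give $h_\xi\ne h_{\xi'}$ because a nonzero positive harmonic function cannot vanish on all of $M(\infty)$.
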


\emph{Remark.} Our result can be generalized to higher dimensional manifolds that contain only isolated flat hypersurfaces. In particular, Theorem \ref{main} holds for the universal cover of an irreducible analytic manifold containing a flat hypersurface.
\\

Our proof is a modification of an argument due to Ancona \cite{MR890161}. The paper is organized as follows. In Section 2 and Section 3 we collect preliminaries for nonpositively curved manifolds and rank $1$ manifolds respectively. In Section 4 we discuss the structure of embedded flat planes in $3$-manifolds and show the hyperbolicity that is needed in the proof of the boundary Harnack inequality. In Section 5 we recall some classical results in potential theory. Section 6 is the main technical part of this paper, we establish the boundary Harnack inequality along geodesics that intersect flat planes transversally. And in Section 7 we show how the boundary Harnack inequality implies Theorem \ref{main}.
\\

\textbf{Acknowledgements} The author would like to thank Professor J\'ozef Dodziuk for the invaluable suggestions and constant encouragement. The author is also grateful to Professor Werner Ballmann for helpful discussions.

\section{Boundaries at infinity}

\subsection{The geometric boundary}

The most natural way to compactify $M$ is by the asymptotic classes of geodesic rays, introduced by Eberlein and O'Neill \cite{MR0336648}.

We say two geodesic rays $\gamma_1, \gamma_2:[0,\infty) \to M$ are equivalent, and denoted it by $\gamma_1 \sim \gamma_2$, if there exists a constant $C$ such that the following inequality
$$d(\gamma_1(t),\gamma_2(t)) \leq C$$ holds for all $t \geq 0$.

Define $M(\infty)$, the sphere at infinity, to be
$$M(\infty)= \text{the set of all geodesic rays}/ \sim.$$ 

Denote by $S_p$ the unit sphere in $T_p(M)$. Given $\upsilon \in S_p$, there exists a unique geodesic ray $\gamma_\upsilon:[0,\infty) \to M$ satisfying $\gamma_\upsilon(0)=p$ and $\gamma'_\upsilon(0)=\upsilon$. It follows from the classical Toponogov comparison theorem that two geodesic rays $\gamma_1$ and $\gamma_2$ starting from $p$ are equivalent  if and only if $\gamma_1=\gamma_2$. On the other hand each equivalence class contains a representative emanating from $p$. Thus $M(\infty)$ can be identified with $S_p$ for each $p \in M$.

Now we can define the cone $C_p(\upsilon, \theta)$ about $\omega$ of angle $\theta$ by
$$C_p(\upsilon, \theta)=\{x \in M:\angle(\upsilon,\gamma'_{px}(0))<\theta\},$$
here $\gamma_{px}$ is the geodesic ray starting from $p$ that passes through $x$. We call
$$T_p(\upsilon,\theta,R)=C_p(\upsilon, \theta) \setminus B_p(R)$$
a truncated cone of radius $R$. We denote $M \cup M(\infty)$ by $\overline{M}$. Then the set of $T_p(\upsilon,\theta,R)$ for all $\upsilon \in S_p$, $\theta$ and $R>0$ and $B_q(r)$ for all $q \in M$ and $r>0$ form a basis of a topology on $\overline{M}$, which is called the cone topology. This topology makes $\overline{M}$ a compactification of $M$.

\subsection{The Martin boundary}

In \cite{MR0003919}, the author introduced another way to compactify non-parabolic manifolds using normalized Green's functions.

Suppose that $M$ admits an entire Green's function $G$ for the Laplacian $\Delta$. For $x,y \in M$, let
\begin{displaymath}
h_y(x)=\dfrac{G(x,y)}{G(p,y)}
\end{displaymath}
be the normalized Green's function with $h_y(p)=1$. A sequence $Y=\{y_i\}$ is called fundamental if $h_{y_i}$ converges to a positive harmonic function $h_Y$ on $M$. Two fundamental sequences $Y$ and $\overline{Y}$ are said to be equivalent if the corresponding limiting harmonic functions $h_Y$ and $h_{\overline{Y}}$ are the same.

\begin{definition}
The Martin boundary $\mathcal{M}$ of $M$ is the set of equivalence classes of non-convergent fundamental sequences.
\end{definition}

Let $\widetilde{M}=M \cup \mathcal{M}$. For every $y \in M$, all sequences  converging to $y$ correspond to an equivalence class $[Y]$. On the other hand, two fundamental sequences that have different limit points in $M$ are not equivalent. Thus $\widetilde{M}$ can be identified with the set of  equivalence classes of fundamental sequences. Define a metric $\rho$ on $\widetilde{M}$
\begin{equation*}
\rho([Y],[Y'])=\sup_{B_p(1)} |h_Y(x)-h_{Y'}(x)|
\end{equation*}
for $[Y],[Y'] \in \widetilde{M}$. The topology induced by $\rho$ makes $\widetilde{M}$ a compactification of $M$.

\emph{Remark.} In contrast with the case when $M$ has negative pinched curvature, the Liouville theorem says that a positive harmonic function on $\mathbb{R}^n$ must be constant. Hence the only positive limiting function is $h=1$ and the Martin compactification of $\mathbb{R}^n$ is homeomorphic to the one point compactification $\mathbb{S}^n$.

\section{Rank $1$ manifolds}

In this section we give a quick overview of rank $1$ manifolds. Unless otherwise stated, all results can be found in \cite{MR656659}.

Let $\gamma$ be a complete geodesic in $M$. Recall that $\mathrm{rank}(\gamma)$ is the dimension of the space of all parallel Jacobi fields along $\gamma$. We can define the rank of $M$ to be
\begin{equation*}
\mathrm{rank}(M)=\min \{\mathrm{rank}(\gamma_\omega),\omega \in S M\},
\end{equation*}
where $\gamma_\omega$ denotes the complete geodesic with initial velocity $\omega$. By definition, $1 \leq \mathrm{rank}(M) \leq n$. In particular, a manifold is of rank $1$ if it admits a geodesic which doesn't bound an infinitesimal flat strip. 

The following lemma is essential in the study of rank $1$ manifolds. 
\begin{lemma}
\label{rigidity}
Let $M$ be a complete, simply connected manifold with nonpositive curvature. If $g$ is a geodesic of rank $1$ and $\{h_n\}$ is a sequence of geodesics such that $h_n(\infty) \to g(\infty)$ and $h_n(-\infty) \to g(-\infty)$ in the cone topology, then $\{h_n\}$ converges to $g$ in the sense that 
\begin{equation*}
\mathrm{dist}({g}'(0), {h}'_n) \to 0 \text{ as } n \to \infty,
\end{equation*}
where $\mathrm{dist}$ denotes the distance in the tangent bundle $TM$.
\end{lemma}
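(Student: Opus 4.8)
The plan is to argue by contradiction, using the Flat Strip Theorem together with the rank $1$ hypothesis to single out $g$ among all geodesics joining $g(-\infty)$ to $g(\infty)$, and then a compactness argument to finish. First I record the rigidity input: if $c$ is any complete geodesic with $c(\pm\infty)=g(\pm\infty)$, then $c$ and $g$ share both endpoints at infinity, hence lie at bounded distance (a convex, eventually bounded function of the parameter is bounded), so by the Flat Strip Theorem their convex hull is isometric to a flat strip $\mathbb{R}\times[0,w]$; since $\mathrm{rank}(g)=1$ the geodesic $g$ carries no nonzero parallel normal Jacobi field, forcing $w=0$, i.e.\ $c=g$ as an oriented geodesic. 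Equivalently, normalizing the Busemann functions $b_{g(\pm\infty)}$ to vanish at $p:=g(0)$, the convex function $f:=b_{g(+\infty)}+b_{g(-\infty)}$ satisfies $\min f=0$, with $\{f=0\}$ equal to the image of $g$.

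Now suppose the conclusion fails. Passing to a subsequence there is $\varepsilon>0$ with $\mathrm{dist}(g'(0),h'_n)\ge\varepsilon$ for all $n$, where I parametrize each $h_n$ so that $h_n(0)$ is the point of $h_n$ nearest $p$, and so that $h_n(\pm\infty)\to g(\pm\infty)$ (the labelling is unambiguous for large $n$ because $g(\infty)\neq g(-\infty)$). The crux is to show that $\{h_n(0)\}$ lies in a bounded subset of $M$. Granting this, pass to a further subsequence with $h_n(0)\to q$ and $h'_n(0)\to v\in S_qM$; by continuous dependence of geodesics on their initial data, $h_n$ converges to the geodesic $c$ with $c(0)=q$, $c'(0)=v$, uniformly on compact parameter intervals and in $TM$. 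Continuity of the endpoint maps in the cone topology gives $c(\pm\infty)=g(\pm\infty)$, so by the rigidity input $c=g$ as an oriented geodesic, and the nearest-point normalization forces $q=g(0)$, $v=g'(0)$. Hence $h'_n(0)\to g'(0)$ in $TM$, so $\mathrm{dist}(g'(0),h'_n)\le\mathrm{dist}(g'(0),h'_n(0))\to 0$, contradicting $\mathrm{dist}(g'(0),h'_n)\ge\varepsilon$.

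It remains to bound $\{h_n(0)\}$, and this is where rank $1$ is genuinely needed. I would use that $b_{h_n(\pm\infty)}\to b_{g(\pm\infty)}$ uniformly on compact subsets of $M$, so that $f_n:=b_{h_n(+\infty)}+b_{h_n(-\infty)}$ converges to $f$ uniformly on compact subsets. Each $f_n$ is convex, is constant along $h_n$, and attains its minimum $m_n\le f_n(p)=0$ there; moreover, since $d(\cdot,h_n)$ is convex and $1$-Lipschitz, the segment $[p,h_n(0)]$ realizes the distance to $h_n$ at each of its points, so it meets $h_n$ orthogonally and $h_n(0)$ is its foot. Assume $d(p,h_n(0))\to\infty$. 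A short comparison-triangle argument then shows that the unit vectors at $p$ pointing toward $h_n(0)$ subconverge to some $w\perp g'(0)$, while along $[p,h_n(0)]$ the convex function $f_n$ lies below the chord joining the values $f_n(p)=0$ and $f_n(h_n(0))=m_n$. The rank $1$ condition is precisely what keeps $m_n$ bounded below (equivalently, keeps the Gromov products $(h_n(\infty)\mid h_n(-\infty))_p$ bounded); granting that, $f_n\to 0$ uniformly along these segments, and in the limit (using $f\ge 0$) one obtains a geodesic ray issuing from $p=g(0)$ in the direction $w\perp g'(0)$ on which $f\equiv 0$, hence contained in $\{f=0\}$ — impossible, since $\{f=0\}$ is the image of $g$. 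Morally, an escaping sequence $\{h_n\}$ forces in the limit a flat half-plane with $g$ on its boundary, contradicting $\mathrm{rank}(g)=1$.

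The one genuine obstacle is this boundedness step. It is the only place the rank $1$ hypothesis enters essentially, and it is exactly what fails for higher-rank geodesics: in $\mathbb{R}^2$ the horizontal lines $\{y=n\}$ share both endpoints at infinity with the $x$-axis yet run off to infinity, and correspondingly there $\{f=0\}$ is the whole plane rather than a single geodesic. Everything else is formal, relying only on the Flat Strip Theorem and the standard compactness of geodesics in a Hadamard manifold.
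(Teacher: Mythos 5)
The paper gives no proof of this lemma: the opening line of Section~3 states that all results there are taken from Ballmann's paper \cite{MR656659}, and the lemma is simply quoted. So there is no in-paper proof to compare against; I will assess your argument on its own.

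Your overall plan is sound and is essentially the standard one: use the Flat Strip Theorem plus $\mathrm{rank}(g)=1$ to show $g$ is the \emph{unique} geodesic joining $g(-\infty)$ to $g(\infty)$, reduce the lemma to the boundedness of the nearest points $h_n(0)$ by a compactness argument, and rule out escape to infinity via Busemann functions. The perpendicularity $w\perp g'(0)$ that you invoke does follow from a short angle-comparison: the segment $[p,h_n(0)]$ meets $h_n$ at a right angle, the ideal triangles $p\,h_n(0)\,h_n(\pm\infty)$ have angle sum at most $\pi$, so the angle at $p$ between $[p,h_n(0)]$ and $[p,h_n(\pm\infty)]$ is at least $\pi/2$ on both sides, and in the limit both are exactly $\pi/2$.

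The one thing you flag as ``the one genuine obstacle''---that $m_n=\min f_n$ must be bounded below---is in fact not needed, and your own setup already closes the argument. Since $f_n$ is convex with $f_n(p)=0$ and $f_n(h_n(0))=m_n\le 0$, convexity gives $f_n\le 0$ on the \emph{entire} segment $[p,h_n(0)]$, regardless of how negative $m_n$ is. Combined with the uniform-on-compacts convergence $f_n\to f$ (continuity of Busemann functions in the boundary point) and the convergence of the initial arc of $[p,h_n(0)]$ to the ray $\gamma_w$, one gets $f\le 0$ on $\gamma_w$; since $f\ge 0$ always, $f\equiv 0$ on $\gamma_w$, which together with $w\perp g'(0)$ contradicts $\{f=0\}=g(\mathbb{R})$. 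So there is no need to control the Gromov products separately, and the rank~$1$ hypothesis enters only where you already used it: in identifying $\{f=0\}$ with the single geodesic $g$. You were right that this is where rank~$1$ is essential and where the Euclidean example escapes, but you were too pessimistic about the step being a gap---your convexity observation already plugs it.
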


We are mainly interested in the case that there is a discrete group of isometries $D$ acting freely on $M$ such that $M/D$ is compact. It is known that every isometry $\phi$ in $D \setminus \{Id\}$ is axial, i.e., there exists a geodesic $\gamma$ and a constant $T>0$ such that $\phi(\gamma(t))=\gamma(t+T)$ for all $t \in \mathbb{R}$. $\gamma$ is called an axis of $\phi$.
\begin{definition}
An axial isometry $\phi$ is called hyperbolic if it has an axis $\gamma$ with $\mathrm{rank}(\gamma)=1$.
\end{definition}

This definition is motivated by following theorem, which is well known if the sectional curvature $K_M$ is bounded from above by a negative constant.

\begin{theorem}
\label{hyperbolic}
Let $\phi$ be a hyperbolic axial isometry and $\gamma$ be an axis of $\phi$. Then for any neighborhoods $U$ of $\gamma(-\infty)$ and $V$ of $\gamma(\infty)$, there exists $N \in \mathbb{N}$ such that $\phi^N(M\setminus U) \subset V$.
\end{theorem}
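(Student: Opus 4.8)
The statement is the ``north--south'' (contracting) dynamics of a loxodromic isometry, familiar from pinched negative curvature; what replaces the missing uniform hyperbolicity is exactly the rank-$1$ rigidity of Lemma~\ref{rigidity}, and the plan is to build a compactness argument around it. I argue by contradiction. Suppose the conclusion fails for some open neighbourhoods $U\ni\gamma(-\infty)$ and $V\ni\gamma(\infty)$; then for every $N$ there is a point $z_N\in M\setminus U$ with $\phi^N(z_N)\notin V$. Choosing $n_k\to\infty$ and, by compactness of $\overline M$, passing to subsequences, we obtain $z_k:=z_{n_k}\to z$ and $w_k:=\phi^{n_k}(z_k)\to w$ in $\overline M$, with $z\neq\gamma(-\infty)$ (the $z_k$ avoid the open set $U$) and $w\neq\gamma(\infty)$ (the $w_k$ avoid $V$).

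The engine of the proof is to transport geodesics by the identity $\phi^{n_k}\bigl(\gamma(-n_kT)\bigr)=\gamma(0)$. Let $\sigma_k$ be the minimizing geodesic segment from $\gamma(-n_kT)$ to $z_k$, with complete extension $\bar\sigma_k$, so that $\phi^{n_k}(\sigma_k)$ is the minimizing segment from the \emph{fixed} point $\gamma(0)$ to $w_k$. Since $\gamma(-n_kT)\to\gamma(-\infty)\neq z$, the lengths $L_k:=d(\gamma(-n_kT),z_k)=d(\gamma(0),w_k)$ tend to $\infty$; hence $w\in M(\infty)$, and, passing to a further subsequence so that the initial directions of $\phi^{n_k}(\sigma_k)$ at $\gamma(0)$ converge, the segments $\phi^{n_k}(\sigma_k)$ converge to the geodesic ray from $\gamma(0)$ to $w$, which is distinct from $\gamma|_{[0,\infty)}$ because $w\neq\gamma(\infty)$.

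To finish, one feeds an appropriate family of complete geodesics into Lemma~\ref{rigidity}. The natural candidates are the $\bar\sigma_k$: one of their endpoints is dragged out to $\gamma(-\infty)$ together with $\gamma(-n_kT)$, and, using the convexity of the distance to the rank-$1$ geodesic $\gamma$ together with standard properties of the cone topology, one checks that the corresponding endpoints $\bar\sigma_k(-\infty)$ also converge to $\gamma(-\infty)$. When the opposite endpoints are forced to $\gamma(\infty)$ as well, Lemma~\ref{rigidity} yields $\bar\sigma_k\to\gamma$; applying $\phi^{n_k}$ and using that the images $\phi^{n_k}(\sigma_k)=[\gamma(0),w_k]$ already meet $\gamma$ at $\gamma(0)$ while $\bar\sigma_k$ contains $\gamma(-n_kT)$, the convexity of the distance function pins $\phi^{n_k}(\sigma_k)$ near $\gamma$ over longer and longer parameter ranges, forcing $w_k\to\gamma(\infty)$ and hence the contradiction $w=\gamma(\infty)$. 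The remaining positions of $z$ (generic boundary points, for which the forward endpoints of $\bar\sigma_k$ need not approach $\gamma(\infty)$) are handled analogously, replacing $\bar\sigma_k$ by geodesics whose endpoint at $\gamma(-\infty)$ is pinned once $\phi^{n_k}$ is applied, since $\phi$ fixes $\gamma(-\infty)$, and exploiting that $\gamma(\pm\infty)$, being endpoints of a rank-$1$ axis, are joined to the remaining boundary points by geodesics that behave like those of a negatively curved space.

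The hard part is precisely this last paragraph: making rigorous the passages to the limit and the identification of the endpoints at infinity of the transported geodesics in the cone topology. The delicacy is genuine --- without curvature pinching, two asymptotic geodesics need not converge to one another, and the cone topology interacts subtly with sequences of geodesics escaping to the boundary --- so one must lean carefully on Lemma~\ref{rigidity} and on the rank-$1$ character of $\gamma(\pm\infty)$, and organize the estimates so that every $z\neq\gamma(-\infty)$ is covered. Everything else is soft.
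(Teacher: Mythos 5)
The paper does not actually prove this statement: Section~3 opens with the remark that ``all results can be found in \cite{MR656659}'' (Ballmann's \emph{Axial isometries of manifolds of non-positive curvature}), and Theorem~\ref{hyperbolic} is quoted there without proof. So the comparison is necessarily with what one expects from that source, where north--south dynamics is indeed deduced from the rigidity statement that appears here as Lemma~\ref{rigidity}. Your framework --- argue by contradiction, use compactness of $\overline{M}$, transport geodesics along $\gamma$ via $\phi^{n_k}\bigl(\gamma(-n_kT)\bigr)=\gamma(0)$, and feed the resulting family into Lemma~\ref{rigidity} --- is therefore the right one, and the preliminary steps (extraction of $z\neq\gamma(-\infty)$ and $w\neq\gamma(\infty)$, the fact that $L_k\to\infty$, the subconvergence of $\phi^{n_k}(\sigma_k)$ to a ray from $\gamma(0)$ to $w$) are carried out correctly.

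The difficulty is that the argument, as you have written it, only closes in the case $\bar\sigma_k(\infty)\to\gamma(\infty)$, which is essentially never the case you need: the forward endpoint of $\bar\sigma_k$ lies past $z_k$ and so tracks $z$ (if $z\in M(\infty)$) or is entirely uncontrolled (if $z\in M$); there is no reason for it to approach $\gamma(\infty)$, so the hypotheses of Lemma~\ref{rigidity} are not satisfied and $\bar\sigma_k\not\to\gamma$. You acknowledge this and relegate ``the remaining positions of $z$'' --- i.e.\ the entire theorem --- to two sentences asserting that one should replace $\bar\sigma_k$ by geodesics whose backward endpoint is pinned at $\gamma(-\infty)$ and then exploit that rank-$1$ endpoints are ``joined to the remaining boundary points by geodesics that behave like those of a negatively curved space.'' Neither assertion is argued. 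The first tacitly uses that arbitrary $\xi\neq\gamma(-\infty)$ can be joined to $\gamma(-\infty)$ by a geodesic, which is itself a nontrivial theorem of Ballmann's proved \emph{via} Lemma~\ref{rigidity} and logically prior to north--south dynamics; the second is precisely the conclusion you are trying to establish. The correct invocation of Lemma~\ref{rigidity} is more delicate: roughly, one shows that if $\sigma_k$ avoided a fixed ball about $\gamma(0)$ then, after translating by $\phi^{n_k}$, extracting a sub-limit, and reversing orientations so that both endpoints are driven to $\gamma(\pm\infty)$, the limiting geodesic would stay a bounded distance away from $\gamma$, contradicting Lemma~\ref{rigidity}. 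Arranging the limits so that this actually happens --- in particular controlling the cone-topology behaviour of both endpoints of the transported geodesics without already assuming the dynamics --- is the substantive content, and your sketch stops exactly where that work begins. As written, this is a correct outline of the intended strategy, not a proof.
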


We will also use the following properties of hyperbolic axial isometries. Recall that $D$ is a group of isometries of $M$ such that $M/D$ is a compact manifold.

\begin{theorem}
\label{density}
Suppose $g$ is a geodesic of rank $1$. For any neighborhoods $U$ of $g(-\infty)$ and $V$ of $g(\infty)$, there exists an axis $\gamma$ of a hyperbolic axial isometry $\phi \in D$ such that $\gamma(-\infty) \in U$ and $\gamma(\infty) \in V$.
\end{theorem}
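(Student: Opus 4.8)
The plan is to derive the statement from the \emph{duality condition} for $D$, which holds because $M/D$ is compact. Recall that this says that for every complete geodesic $c$ the endpoints $c(-\infty)$ and $c(\infty)$ are $D$-dual: for any $p\in M$ there are $\phi_n\in D$ with $\phi_n p\to c(\infty)$ and $\phi_n^{-1}p\to c(-\infty)$ in the cone topology. (One lifts $t\mapsto c(t)$ into a fixed compact fundamental domain, getting $\psi_t\in D$ with $\psi_t^{-1}c(t)$ bounded, and passes to limit points as $t\to\pm\infty$; see \cite{MR656659}.) First I would apply this to $c=g$ with base point $o:=g(0)$, producing $\phi_n\in D$ with $\phi_n o\to g(\infty)$ and $\phi_n^{-1}o\to g(-\infty)$. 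Since $d(o,\phi_n o)\to\infty$ we have $\phi_n\neq\mathrm{Id}$ for all large $n$, so $\phi_n$ is axial; fix an axis $\sigma_n$ — the one through the point of the minimal displacement set $\mathrm{Min}(\phi_n)$ nearest $o$ — with translation length $T_n>0$ and endpoints $\sigma_n^{\pm}:=\sigma_n(\pm\infty)$, parametrised so that $\sigma_n(0)$ is the point of $\sigma_n$ closest to $o$.

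The heart of the argument is to show that $\sigma_n^-\to g(-\infty)$ and $\sigma_n^+\to g(\infty)$ in the cone topology. The crucial input — and the only place where the rank $1$ hypothesis on $g$ enters nontrivially — is the estimate that $\delta:=\sup_n d(o,\sigma_n)$ is finite. Granting this, $T_n\le d(o,\phi_n o)\le T_n+2\delta$, so $T_n\to\infty$; moreover $\sigma_n(T_n)=\phi_n(\sigma_n(0))$ lies within $\delta$ of $\phi_n o$, hence $\sigma_n(T_n)\to g(\infty)$ (points a bounded distance apart have the same limit at infinity). On the other hand $\sigma_n|_{[0,\infty)}$ and the ray from $o$ to $\sigma_n^+$ are asymptotic, hence remain within $\delta$ of each other, while $d(o,\sigma_n(T_n))\ge T_n-\delta\to\infty$; the angle comparison at $o$ then gives $\angle_o(\sigma_n(T_n),\sigma_n^+)\to 0$. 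Combining, $\angle_o(\sigma_n^+,g(\infty))\to 0$, i.e.\ $\sigma_n^+\to g(\infty)$, and symmetrically $\sigma_n^-\to g(-\infty)$.

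Once this is in hand the conclusion is quick. By Lemma~\ref{rigidity}, $\sigma_n^{\pm}\to g(\pm\infty)$ forces $\sigma_n\to g$ in $TM$. The set of rank $1$ vectors is open in $SM$: if $v_k\to v$ and each $\gamma_{v_k}$ carried a unit parallel Jacobi field orthogonal to $\gamma_{v_k}'$, then, by continuous dependence of the Jacobi equation on its geodesic, a subsequence of these fields would converge to such a field along $\gamma_v$, contradicting $\mathrm{rank}(\gamma_v)=1$. Since $g'(0)$ is a rank $1$ vector, $\sigma_n$ is a rank $1$ geodesic for all large $n$, that is, $\phi_n$ is hyperbolic; and for all large $n$ we have $\sigma_n^-\in U$ and $\sigma_n^+\in V$. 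Taking $\gamma:=\sigma_n$ and $\phi:=\phi_n$ for any such $n$ finishes the proof.

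The step I expect to be the main obstacle is the estimate $\sup_n d(o,\sigma_n)<\infty$. Geometrically it asserts that an isometry cannot ``shear'' a neighbourhood of the rank $1$ geodesic $g$: were $g$ to bound a flat strip, the axes $\sigma_n$ could run off to infinity (and the $\phi_n$ need not be hyperbolic at all), so the bound must exploit rank $1$ essentially. I would obtain it from the rigidity of Lemma~\ref{rigidity} together with the ``Morse''-type property of the rank $1$ geodesic $g$ in the cocompact setting — namely that the segments $[\phi_n^{-1}o,\phi_n o]$, whose endpoints converge to $g(\mp\infty)$, fellow-travel $g$ uniformly — and then transfer that control to $\mathrm{Min}(\phi_n)$. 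That estimate is the real content; the remainder is formal, and is essentially the argument of \cite{MR656659}.
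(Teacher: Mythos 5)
The paper cites this statement from Ballmann \cite{MR656659} without giving a proof, so there is no in-paper argument to compare against; I assess your proposal on its own terms. Your overall strategy --- duality to produce $\phi_n\in D$ with $\phi_n^{\pm1}o\to g(\pm\infty)$, show the axes $\sigma_n$ converge to $g$, then invoke semicontinuity of rank --- has the right shape, and it is the spirit of Ballmann's argument. The deduction of $\sigma_n^{\pm}\to g(\pm\infty)$ from $\delta=\sup_n d(o,\sigma_n)<\infty$, and the closing steps via Lemma~\ref{rigidity} and openness of the rank-one locus in $SM$, are all correct as you write them.

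The genuine gap is precisely the step you flag, and the route you sketch to it does not close as stated. The rank-one hyperbolicity (``Morse'') lemma controls geodesics \emph{both} of whose endpoints lie in fixed small neighborhoods $U'$, $V'$ of $g(-\infty)$, $g(\infty)$. You may apply it to the single segment $[\phi_n^{-1}o,\phi_n o]$, since duality puts its endpoints in $U'$, $V'$; but the axis $\sigma_n$ has endpoints $\sigma_n^{\pm}$ --- the fixed points of $\phi_n$ on $M(\infty)$ --- and duality says nothing a priori about where those lie. Controlling $\sigma_n^{\pm}$ is exactly what you are trying to \emph{deduce} from the bound on $d(o,\sigma_n)$, so ``transferring'' fellow-travelling of $[\phi_n^{-1}o,\phi_n o]$ to $\mathrm{Min}(\phi_n)$ is the hard step, not a formality, and the obvious attempt (apply Lemma~\ref{rigidity} or the Morse lemma to $\sigma_n$) presupposes what it is meant to prove. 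Breaking the circle requires the north--south dynamics argument that Ballmann actually uses: iterate the hyperbolicity lemma along the orbit to show $\phi_n^{k}o\in V'$ for all $k\ge1$ (and $\phi_n^{-k}o\in U'$) once $n$ is large --- equivalently $\phi_n(\overline{V'})\subset V'$ --- whence $\sigma_n^{+}=\lim_k\phi_n^{k}o\in\overline{V'}$ (or appeal to Brouwer's theorem on $\overline{V'}\cap M(\infty)$ for a fixed point of $\phi_n$). Only after $\sigma_n^{\pm}$ are pinned in $U'$, $V'$ does the hyperbolicity lemma apply to $\sigma_n$ itself to give $d(o,\sigma_n)\le R$ and $\mathrm{rank}(\sigma_n)=1$ (which, incidentally, renders your openness-of-rank-one step redundant). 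As written, the proposal assumes its hardest estimate, and the intended shortcut to it is circular.
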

\begin{theorem}
\label{free}
The isometry group $D$ contains a free subgroup on $2$ generators.
\end{theorem}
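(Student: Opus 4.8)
The plan is to run the classical ping-pong argument, feeding it the dynamics of hyperbolic axial isometries recorded in Theorems~\ref{density} and~\ref{hyperbolic}. The first step is to produce two hyperbolic isometries whose axes have four \emph{distinct} endpoints at infinity. Since $M$ has rank one there is a rank one geodesic, so Theorem~\ref{density} gives a hyperbolic $\phi_1\in D$; fix an axis $\gamma_1$ and write $\xi_1^{\pm}=\gamma_1(\pm\infty)$, which are distinct. Here I would invoke that $D$ is non-elementary: because $M/D$ is compact and $M$ has rank one, $D$ fixes no point and no pair of points of $M(\infty)$ (this belongs to the circle of facts recalled in Section~3; see \cite{MR656659}). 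Thus there is $\alpha\in D$ with $\{\alpha\xi_1^-,\alpha\xi_1^+\}\cap\{\xi_1^-,\xi_1^+\}=\emptyset$, and $\phi_2:=\alpha\phi_1\alpha^{-1}$ is hyperbolic with axis $\alpha\gamma_1$ and endpoints $\xi_2^{\pm}:=\alpha\xi_1^{\pm}$, so that $\xi_1^{+},\xi_1^{-},\xi_2^{+},\xi_2^{-}$ are four pairwise distinct points of $M(\infty)$.

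Next I would set the table. Using that $\overline{M}$ is compact Hausdorff, pick pairwise disjoint open neighborhoods $U_i^{\pm}$ of $\xi_i^{\pm}$ in $\overline{M}$ ($i=1,2$). By Theorem~\ref{hyperbolic} applied to $\phi_1$ there is $N_1$ with $\phi_1^{\,N_1}(\overline{M}\setminus U_1^-)\subset U_1^+$, and applied to $\phi_1^{-1}$ there is $N_1'$ with $\phi_1^{-N_1'}(\overline{M}\setminus U_1^+)\subset U_1^-$; since $U_1^+\subset\overline{M}\setminus U_1^-$ and $U_1^-\subset\overline{M}\setminus U_1^+$, iterating these inclusions shows that $a:=\phi_1^{\,N_1 N_1'}$ satisfies $a^{m}(\overline{M}\setminus U_1^-)\subset U_1^+$ and $a^{-m}(\overline{M}\setminus U_1^+)\subset U_1^-$ for every $m\ge 1$. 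Define $b:=\phi_2^{\,N_2 N_2'}$ analogously. Because the four neighborhoods are pairwise disjoint, the image of either of $U_2^{\pm}$ under $a^{\pm m}$ lands in $U_1^{\pm}$, and symmetrically for $b^{\pm m}$ and the $U_1^{\pm}$.

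Then comes the ping-pong itself. I would fix $z\in\overline{M}$ outside $U_1^{+}\cup U_1^{-}\cup U_2^{+}\cup U_2^{-}$, which is possible as these sets may be taken small. Given a nontrivial element $w$ of the free group on two symbols, after conjugation we may assume $w$ is cyclically reduced, so in syllable form its first and last syllables are nonzero powers of different generators. Substituting $a,b$ for the two symbols, evaluating $w$ at $z$, and applying the inclusions above one syllable at a time — the disjointness of the $U_i^{\pm}$ ensuring that the output of each syllable lies in the domain on which the next syllable contracts — one finds $w(z)\in U_1^{\pm}\cup U_2^{\pm}$, hence $w(z)\neq z$ and $w\neq\mathrm{id}$. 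Therefore $a$ and $b$ generate a free subgroup of rank two in $D$.

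The only genuinely nontrivial step is the first one: producing two hyperbolic isometries with four distinct endpoints at infinity. That is exactly where cocompactness and the rank one hypothesis enter, through the non-elementarity of $D$; once this is granted, the rest is the routine ping-pong machinery, the one point requiring a little care being the bookkeeping with powers and disjoint neighborhoods in the middle step. This whole argument, and the statement itself, can be found in \cite{MR656659}.
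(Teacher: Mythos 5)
The paper does not prove Theorem~\ref{free} at all: it sits in Section~3 under the blanket declaration that ``all results can be found in \cite{MR656659},'' so there is no in-paper argument to compare against. Your proof supplies the standard Tits/ping-pong argument, which is exactly what that citation points to, so the route is the expected one and the overall structure (produce two hyperbolic isometries with four distinct endpoints, take disjoint attracting/repelling neighborhoods, iterate Theorem~\ref{hyperbolic}, play ping-pong on $\overline{M}$) is sound.

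Two points deserve tightening. First, the sentence ``after conjugation we may assume $w$ is cyclically reduced, so in syllable form its first and last syllables are nonzero powers of different generators'' is false as stated: $aba$ is cyclically reduced but begins and ends with an $a$-syllable. This does not break the argument --- reading the syllables right to left from a base point $z$ outside $U_1^+\cup U_1^-\cup U_2^+\cup U_2^-$, the disjointness of the neighborhoods forces the final output into some $U_i^{\pm}$ regardless of whether the first and last syllables share a generator, so $w(z)\ne z$ --- but the justification you give for it is not the right one and should be dropped or replaced by this direct observation. Second, the step where you invoke non-elementarity of $D$ to produce $\alpha$ with $\{\alpha\xi_1^{\pm}\}\cap\{\xi_1^{\pm}\}=\emptyset$ is asserted rather than derived: non-elementarity gives you $\alpha$ with $\alpha\{\xi_1^{\pm}\}\neq\{\xi_1^{\pm}\}$, but getting the endpoint sets fully disjoint requires an extra (standard, but nontrivial) argument. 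It would be cleaner, and closer to the toolkit the paper actually records, to use the density of $\mathrm{Per}_{\mathrm{hyp}}(D)$ in $SM$ (the second Theorem~\ref{density}): pick $\omega\in SM$ with $\gamma_\omega(\pm\infty)\notin\{\xi_1^{\pm}\}$, approximate $\omega$ by a hyperbolic periodic vector, and use continuity of the endpoint map to get a second hyperbolic axis with endpoints disjoint from those of $\gamma_1$. With those two repairs the proof is complete and matches the standard reference.
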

\begin{theorem}
\label{density}
Denote by $\mathrm{Per}_{\mathrm{hyp}}(D)$ the set of vectors in $\mathrm{S}M$ which are tangent to axis of hyperbolic isometries of $D$. Then $\mathrm{Per}_{\mathrm{hyp}}(D)$ is dense in $\mathrm{S}M$.
\end{theorem}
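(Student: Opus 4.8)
The plan is to combine the rigidity Lemma~\ref{rigidity} with the theorem quoted above that guarantees, for every rank~$1$ geodesic $g$, an axis of a hyperbolic isometry of $D$ with endpoints in prescribed neighborhoods of $g(-\infty)$ and $g(\infty)$ (call it the \emph{axis--approximation theorem}), together with one external fact from \cite{MR656659}: the set $\mathcal R\subset\mathrm{S}M$ of \emph{regular} (i.e.\ rank~$1$) vectors is open and dense in $\mathrm{S}M$. Granting this, it is enough to prove the single inclusion $\mathcal R\subseteq\overline{\mathrm{Per}_{\mathrm{hyp}}(D)}$: taking closures then gives $\mathrm{S}M=\overline{\mathcal R}\subseteq\overline{\mathrm{Per}_{\mathrm{hyp}}(D)}$, which is exactly the assertion. (In the other direction $\mathrm{Per}_{\mathrm{hyp}}(D)\subseteq\mathcal R$, since the axis of a hyperbolic isometry has rank~$1$ by definition, so the two sets in fact have the same closure.)

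To prove $\mathcal R\subseteq\overline{\mathrm{Per}_{\mathrm{hyp}}(D)}$, fix a regular vector $v$ and let $g$ be the complete geodesic with $g'(0)=v$; by definition $\mathrm{rank}(g)=1$. Choose neighborhood bases $\{U_n\}$ of $g(-\infty)$ and $\{V_n\}$ of $g(\infty)$ in $M(\infty)$. Since $M/D$ is compact the limit set of $D$ is all of $M(\infty)$, so the axis--approximation theorem applies to $g$, and for each $n$ it produces a hyperbolic isometry $\phi_n\in D$ with axis $\gamma_n$ such that $\gamma_n(-\infty)\in U_n$ and $\gamma_n(\infty)\in V_n$. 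Then $\gamma_n(\pm\infty)\to g(\pm\infty)$ in the cone topology, and because $g$ has rank~$1$, Lemma~\ref{rigidity} gives $\mathrm{dist}(g'(0),\gamma_n')\to 0$ in $TM$; hence we may pick parameters $t_n$ with $\gamma_n'(t_n)\to v$. Each $\gamma_n'(t_n)$ is tangent to the axis of a hyperbolic isometry, so $\gamma_n'(t_n)\in\mathrm{Per}_{\mathrm{hyp}}(D)$, and therefore $v\in\overline{\mathrm{Per}_{\mathrm{hyp}}(D)}$. As $v$ was an arbitrary regular vector, the inclusion follows.

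The one point that is not purely formal is the density of the regular set $\mathcal R$ in $\mathrm{S}M$, equivalently that the singular vectors are nowhere dense. Openness of $\mathcal R$ is immediate from the definition of rank, and $\mathcal R\neq\emptyset$ is precisely the rank~$1$ hypothesis on $M$; the delicate statement is density, which rests on the fact that $D$ satisfies the duality condition (a consequence of $M/D$ being compact) and belongs to the structure theory of rank~$1$ manifolds, and I would simply quote it from \cite{MR656659}. Once that is in hand the remaining steps — shrinking the neighborhoods $U_n,V_n$, invoking the axis--approximation theorem, and extracting the approximating tangent vectors $\gamma_n'(t_n)$ via Lemma~\ref{rigidity} — are routine, so this cited input is where the real content lies.
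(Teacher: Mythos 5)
The paper itself gives no proof of this statement: Section~3 opens by declaring that, unless otherwise stated, its results are taken from \cite{MR656659}, and this theorem is one of those. So there is no in-paper argument to compare against; what can be assessed is whether your reconstruction is sound, and it is. The logic is correct: density of the regular set $\mathcal R$ in $\mathrm{S}M$ (your cited external input), plus the endpoint-approximation theorem for rank~$1$ geodesics, plus Lemma~\ref{rigidity}, together give $\mathcal R\subseteq\overline{\mathrm{Per}_{\mathrm{hyp}}(D)}$ and hence $\mathrm{S}M=\overline{\mathcal R}\subseteq\overline{\mathrm{Per}_{\mathrm{hyp}}(D)}$. The one step that deserves a word is the extraction of the parameters $t_n$: the conclusion of Lemma~\ref{rigidity} is $\mathrm{dist}(g'(0),\gamma_n')\to 0$ where $\gamma_n'$ is the image of the lifted geodesic in $TM$, so the existence of $t_n$ with $\gamma_n'(t_n)\to v$ is exactly what the lemma gives, and each such vector is tangent to an axis of a hyperbolic isometry, hence in $\mathrm{Per}_{\mathrm{hyp}}(D)$. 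One caution worth making explicit: the endpoint-approximation theorem and the density of $\mathrm{Per}_{\mathrm{hyp}}(D)$ are close in strength (density plus continuity of the endpoint maps $\mathrm{S}M\to M(\infty)$ recovers the endpoint-approximation statement), so you should state clearly, as you do, that the endpoint-approximation theorem is being imported from \cite{MR656659} as an independent input rather than derived, to avoid any appearance of circularity. With that understood, your proof is correct and is the natural way to assemble the facts the paper cites.
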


Combining Theorem \ref{free} with the following theorem due to Brooks, we obtain immediately that $\lambda_1(\Delta)$, the first eigenvalue of the Laplacian operator on $M$, is positive.

\begin{theorem}(\cite{MR634438})
Let $D$ be a group of isometries acting on $M$ such that $M/D$ is a compact manifold. Then $D$ is amenable if and only if $0$ is in the spectrum of the Laplacian.
\end{theorem}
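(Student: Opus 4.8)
The plan is to compare the bottom of the $L^2$-spectrum
$\lambda_0(M)=\inf\bigl\{\int_M|\nabla f|^2\big/\int_M f^2 : f\in C_c^\infty(M),\ f\not\equiv 0\bigr\}$
of the Laplacian on $M$ with a purely combinatorial quantity attached to $D$, and then to invoke F\o{}lner's criterion for amenability; recall that $0\in\operatorname{spec}(\Delta)$ precisely when $\lambda_0(M)=0$. Since $M/D$ is a compact manifold, $D\cong\pi_1(M/D)$ is finitely generated, and we fix a finite symmetric generating set $S$. Moreover $M$ has bounded geometry: the injectivity radius is bounded below, all curvatures (in particular the Ricci curvature) are bounded, and the volume of an $r$-ball is bounded above and below uniformly in its center. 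Choose a relatively compact fundamental domain $F\subset M$, so that $M=\bigsqcup_{g\in D}gF$ up to a null set, and define a locally finite graph $\mathcal{G}$ with vertex set $D$ by $g\sim g'$ iff $\overline{gF}\cap\overline{g'F}\neq\emptyset$; the $D$-action makes $\mathcal{G}$ quasi-isometric to the Cayley graph of $(D,S)$. For a finite $A\subset D$ write $\partial_{\mathcal{G}}A$ for the set of vertices of $A$ having a neighbour in $D\setminus A$.

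First I would treat the implication ``$D$ amenable $\Rightarrow\lambda_0(M)=0$''. Given $\varepsilon>0$, F\o{}lner's criterion produces a finite $A\subset D$ with $|\partial_{\mathcal{G}}A|<\varepsilon|A|$. Set $\Omega_A=\bigcup_{g\in A}gF$ and let $f_A$ be the Lipschitz function equal to $1$ on $\{x\in\Omega_A:d(x,\partial\Omega_A)\geq 1\}$, equal to $0$ outside $\Omega_A$, and interpolating linearly in between; bounded geometry gives $|\nabla f_A|\leq C$. Since $f_A$ is locally constant away from the copies $gF$ with $g\in\partial_{\mathcal{G}}A$, one obtains $\int_M|\nabla f_A|^2\leq C'|\partial_{\mathcal{G}}A|$, while $\int_M f_A^2\geq c\,|A|$ because $f_A\equiv 1$ on a definite fraction of every non-boundary copy of $F$. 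Hence the Rayleigh quotient of $f_A$ is at most $(C'/c)\varepsilon$, and letting $\varepsilon\to 0$ gives $\lambda_0(M)=0$.

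The harder direction is ``$D$ non-amenable $\Rightarrow\lambda_0(M)>0$'', and the crux is transferring a combinatorial isoperimetric inequality to a Riemannian one. By F\o{}lner's criterion, non-amenability yields $c>0$ with $|\partial_{\mathcal{G}}A|\geq c|A|$ for every finite $A\subset D$. I would then show the Cheeger constant $h(M)=\inf_\Omega\operatorname{area}(\partial\Omega)/\operatorname{vol}(\Omega)$, the infimum over relatively compact domains with smooth boundary, is positive. Given such an $\Omega$, put $A=\{g\in D:\operatorname{vol}(gF\cap\Omega)\geq\tfrac12\operatorname{vol}(F)\}$ and let $E=\{g\in D:0<\operatorname{vol}(gF\cap\Omega)<\tfrac12\operatorname{vol}(F)\}$. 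Bounded geometry supplies a uniform relative isoperimetric inequality on each $\overline{gF}$ and on unions of boundedly many adjacent copies, so every $g\in E$, and every edge of $\mathcal{G}$ joining $A$ to $D\setminus A$, contributes a definite amount of $(\partial\Omega)$-area within a bounded neighbourhood, with bounded multiplicity; this gives $\operatorname{area}(\partial\Omega)\geq c_1(|\partial_{\mathcal{G}}A|+|E|)$ and $\operatorname{vol}(\Omega)\leq c_2(|A|+|E|)$. Combining with $|\partial_{\mathcal{G}}A|\geq c|A|$ yields $\operatorname{area}(\partial\Omega)\geq c_3\operatorname{vol}(\Omega)$, hence $h(M)>0$, and Cheeger's inequality $\lambda_0(M)\geq h(M)^2/4$ completes the proof.

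As an alternative to the last paragraph one may compare the quadratic form $\int_M|\nabla f|^2$ directly with the combinatorial Dirichlet form on $\ell^2(D)$ and invoke Kesten's theorem that non-amenability of $(D,S)$ is equivalent to a spectral gap for the simple random walk; the comparison through the fundamental domain is the same technical point. In either route the main obstacle is precisely the bookkeeping for the copies of $F$ that $\Omega$ meets only partially: controlling those requires the uniform relative isoperimetric estimate on bounded pieces of $M$, and this is the one place where compactness of $M/D$ is genuinely used.
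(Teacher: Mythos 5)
The paper states this as a theorem of Brooks and simply cites \cite{MR634438}, so there is no internal proof to compare against. Your strategy — comparing the Riemannian Cheeger constant of $M$ with a combinatorial isoperimetric constant on the deck group through a compact fundamental domain, then closing the easy direction with explicit F\o{}lner test functions and the hard direction with Cheeger's inequality $\lambda_0\geq h(M)^2/4$ — is essentially Brooks' original argument, and its structure is sound.

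One explicit inequality you write down in the hard direction is, however, false as stated: the bound $\operatorname{area}(\partial\Omega)\geq c_1\bigl(|\partial_{\mathcal{G}}A|+|E|\bigr)$ cannot hold with $c_1$ independent of $\Omega$. Take $\Omega$ to be a thin $\epsilon$-tube about a geodesic segment of length $L$ (with $n\geq 3$, which is the case of interest here): for $\epsilon$ small one has $A=\emptyset$, hence $\partial_{\mathcal{G}}A=\emptyset$, while $|E|\sim L$ and $\operatorname{area}(\partial\Omega)\sim\epsilon^{n-2}L\ll c_1 L$. The point is that a copy $gF$ which $\Omega$ meets in only a tiny volume does \emph{not} contribute a definite amount of boundary area; the relative isoperimetric inequality on a bounded neighbourhood $U\supset gF$ only yields $\operatorname{area}(\partial\Omega\cap U)\geq C\min\bigl(\operatorname{vol}(\Omega\cap U),\operatorname{vol}(U\setminus\Omega)\bigr)^{(n-1)/n}$, which is small when $\operatorname{vol}(gF\cap\Omega)$ is small. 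The correct bookkeeping weights the $E$-copies by volume: since $\operatorname{vol}(gF\cap\Omega)\leq\operatorname{vol}(F)$, one has $\operatorname{vol}(gF\cap\Omega)^{(n-1)/n}\geq\operatorname{vol}(F)^{-1/n}\operatorname{vol}(gF\cap\Omega)$, so summing with bounded multiplicity gives $\operatorname{area}(\partial\Omega)\geq c_1|\partial_{\mathcal{G}}A|+c_1'\sum_{g\in E}\operatorname{vol}(gF\cap\Omega)$. Pairing this with $\operatorname{vol}(\Omega)\leq\operatorname{vol}(F)\,|A|+\sum_{g\in E}\operatorname{vol}(gF\cap\Omega)$ and the combinatorial bound $|\partial_{\mathcal{G}}A|\geq c|A|$ yields $h(M)>0$ as desired. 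A similar small adjustment is needed in the easy direction: the cutoff scale in the definition of $f_A$ should be taken smaller than the minimal distance between non-adjacent translates of $F$, so that interior vertices genuinely have $f_A\equiv 1$ on their copies. With these two repairs your argument is correct and is the same as Brooks'.
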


\section{flats in $3$-manifolds}

In this section, we study embedded flat planes in the universal cover of a rank $1$ compact $3$-manifold and derive the hyperbolicity of embedded rectangles which intersects a flat plane transversally.

Let $M$ be a complete, simply connected $3$-dimensional Riemannian manifold with nonpositive curvature. We further assume that there is a group of isometries D acting on $M$ such that $M/D$ is a rank $1$ compact  manifold.

If $M$ doesn't contain a flat plane, then $M$ satisfies the uniform visibility axiom \cite{MR0295387}. In this case the Martin boundary is naturally identified with the geometric boundary. The proof is due to Ancona \cite{MR890161} in the case of negative curvature and it carries over easily to uniform visibility manifolds. Ancona's argument depends on the geometric fact that all geodesics in a uniform visibility manifold are equi-hyperbolic, i.e. given $0<\theta<\pi$, there exists constants $\theta_0=\theta_0(\theta,M)>0$ and $T=T(\theta,M)>0$ such that for any geodesic $\gamma$ in $M$,
\begin{equation*}
C_{\gamma(T)}(\gamma'(T),\theta+\theta_0) \subset C_{\gamma(0)}(\gamma'(0),\theta).
\end{equation*}
However, this property is not true for manifolds admitting flatness. In fact, even for geodesics in a small neighborhood of a hyperbolic axis, we don't have the equi-hyperbolicity. Therefore we need to study the divergence for geodesics intersecting a flat plane transversally.

The following Lemma gives the structure of flat planes in $M$. A large part of the argument in the proof is due to Schroeder \cite{MR1050413}.

\begin{lemma}
\label{half flat}
Suppose that $M$ contains a flat plane. Then there exists a flat plane $F$ in $M$ such that
\begin{enumerate}
\item[(1)] The set $\{\phi F| \phi \in D\}$ is discrete.

\item[(2)] There is at least one component of $M \setminus F$, denoted by $M^+$, such that none of the geodesics in $F$ bound a flat half plane in $M^+$. 
\end{enumerate}
\end{lemma}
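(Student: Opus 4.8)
The plan is to exploit the rank $1$ assumption together with the cocompact action to first control how flat planes sit in $M$, and then to run a pigeonhole/minimality argument to extract the desired $F$. First I would establish (1) in the following strengthened form: the collection of all maximal flat planes in $M$ (equivalently, the maximal flats, since $M$ is $3$-dimensional and has no flat $3$-space by rank $1$) is locally finite, i.e. any compact set meets only finitely many of them. This uses that a flat plane $F'$ projects to a flat (immersed) surface in the compact quotient $M/D$; if infinitely many $D$-translates of a fixed flat plane passed through a fixed ball, a limiting argument (Arzel\`a--Ascoli on the totally geodesic embeddings, using nonpositive curvature to get the limit is again a flat plane) would produce either a flat half-space, a continuous family of flat planes, or two flat planes at bounded Hausdorff distance bounding a flat slab — in every case one manufactures a vector in $SM$ all of whose Jacobi fields are parallel near it in a $2$-parameter way, and pushing this down to $SM/D$ and invoking Lemma \ref{rigidity} (density of rank $1$ vectors, Theorem \ref{density}) forces $\mathrm{rank}(M)\geq 2$, a contradiction. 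Fixing any one flat plane $F_0$, discreteness of $\{\phi F_0 \mid \phi\in D\}$ follows since the stabilizer of $F_0$ in $D$ acts cocompactly on $F_0$ (again by a standard argument from cocompactness of $M/D$), so the orbit is discrete.

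Next, for (2), the idea — following Schroeder — is to choose $F$ among all flat planes so as to minimize (or rather, to make extremal) the amount of flatness on one side. Concretely, call a geodesic $c\subset F$ \emph{two-sided flat} if it bounds a flat half-plane in each of the two components of $M\setminus F$. For each flat plane $F'$ let $\Sigma(F')\subset S_{F'}$ be the set of (unit) directions of geodesics in $F'$ that bound a flat half-plane on a \emph{prescribed} side $M'$ of $M\setminus F'$; this is a closed, arc-connected (in fact a closed circular arc or all of $S^1$ or empty, by a convexity argument on the Busemann/flat-strip structure) subset of the circle $S_{F'}$. If for the chosen side $\Sigma(F')=\emptyset$ we are done. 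Otherwise a geodesic $c$ with direction in $\Sigma(F')$ together with its flat half-plane $H$ spans, with part of $F'$, a flat region; I would argue that the boundary geodesic of $H$ (or a parallel geodesic pushed to the ``edge'' of the maximal flat slab containing $H$) lies in a \emph{different} flat plane $F''$ on which the flat side has strictly smaller angular measure $|\Sigma|$. Since by part (1) and cocompactness the possible flat planes through a bounded region fall into finitely many $D$-orbits, the quantity $|\Sigma|$ cannot decrease indefinitely, so the process terminates at a flat plane with $\Sigma=\emptyset$ on the chosen side — this is our $M^+$. One must also check that this terminal $F$ still satisfies (1); but (1) was shown for \emph{every} flat plane, so there is nothing extra to do.

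The main obstacle I expect is the inductive/extremal step in (2): making precise the claim that the ``flat side'' can always be strictly reduced by passing to a neighboring flat plane, and that the process genuinely terminates. The termination is where rank $1$ and cocompactness must be used in tandem — without a discreteness (finiteness up to $D$) statement one only gets an infimum, not a minimum, and the infimum could fail to be attained or could be positive. So the real content is: (a) a clean description of the set of flat half-planes bounded by geodesics of a given flat plane on a given side (this is essentially a flat-strip-theorem computation, giving convexity of $\Sigma$), (b) the geometric construction producing the neighboring flat plane with smaller $\Sigma$, and (c) a compactness input — the finitely-many-orbits statement from part (1) — guaranteeing the decreasing sequence of values of $|\Sigma|$ stabilizes at $0$. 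Steps (a) and (c) are routine given the preliminaries; step (b), the ``pushing to the edge of the maximal flat slab'' construction, is the delicate point and is exactly where I would follow Schroeder \cite{MR1050413} closely.
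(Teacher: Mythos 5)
Your overall instincts --- exploit rank $1$ plus cocompactness, reduce to Schroeder \cite{MR1050413}, and locate $F$ by an extremal argument --- are sound, and part (1) via orbit discreteness is fine (the paper simply cites Lemma 5 of \cite{MR1050413}). But your route to (2) is genuinely different from the paper's, and it has a real gap at exactly the step you flag as delicate.

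The paper does not iterate an angular-measure reduction over a sequence of flat planes. Instead it makes a single structural move: given a flat plane $F_1$ with a geodesic $H\subset F_1$ bounding a flat half plane off $F_1$, it passes to the parallel set $P_H$ of $H$, which is a closed convex subset splitting isometrically as $H\times N$. Because $P_H$ contains the convex hull of $F_1$ and the half plane, it is $3$-dimensional; because $M$ is rank $1$ and irreducible, $P_H\neq M$, so $P_H$ is not complete and $N$ has nonempty boundary. The paper then picks $p\in\partial N$ and a complete geodesic $g\subset N$ at minimal distance to $p$, and sets $F=H\times g$. The minimality is thus a one-shot distance minimization in the $2$-dimensional factor $N$, not an iterated decrease of $|\Sigma(F')|$ across $D$-orbits. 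Property (2) then drops out immediately: if some geodesic $g'\subset F$ bounded a flat half plane in $M^+$, the parallels of $g'$ inside that half plane would have endpoints in $F(\infty)$, forcing either $p\notin\partial N$ or $g$ not closest to $p$ --- a direct contradiction of the choice of $g$, with no induction needed.

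The gap in your proposal is precisely step (b) and the termination of the iteration. You never justify that passing to a ``neighboring'' flat plane strictly decreases $|\Sigma|$, nor that the sequence of flat planes stays in a region controlled by finitely many $D$-orbits (the planes produced by your construction could drift off to infinity modulo $D$, so ``finitely many orbits through a bounded region'' does not by itself cap the infimum). Even granting strict decrease, a strictly decreasing sequence of real numbers indexed by planes in infinitely many orbits need not reach $0$. The paper's $P_H=H\times N$ device sidesteps both problems: the noncompleteness of $P_H$ (a consequence of rank $1$ and irreducibility) supplies $\partial N\neq\emptyset$, and the minimal geodesic to a boundary point exists by an elementary compactness argument in $N$, with no appeal to a terminating induction. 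If you want to repair your approach, the missing ingredient is exactly this splitting: work inside $P_H$, not among all flat planes in $M$.
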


\begin{proof}
Assume that $F_1$ is a flat plane in $M$ and $H$ is a complete geodesic in $F_1$ which bounds a flat half plane $F_2^+ \not\subset F_1$, then $H=F_1 \cap F_2^+$. Denote by $P_H$ the union of totally geodesic submanifolds parallel to $H$. It is known that $P_H$ is a closed convex subset of $M$ and splits isometrically as $H \times N$(see \cite{MR823981} for the details). $P_H$ contains the convex hull of $F_1 \cup F_2^+$ and therefore has dimension $3$. $P_H$ cannot be complete, otherwise $M=P_H$ is reducible and $\mathrm{rank}(M) \geq 2$. Thus $N$ has nonempty boundary. Let $p \in \partial N$ and $g$ be a complete geodesic in $N$ with minimal distance to $p$. Now let $F$ be the flat plane $H \times g \subset H \times N$. $F$ divides $M$ into two components. Denote by $M^+$ the component of $M \setminus F$ that doesn't contain $H$ and by $M^-$ the other component. We claim that $F$ and $M^+$ satisfy the required properties.

For the proof of the discreteness of $\{\phi F: \phi \in D\}$, see Lemma 5 in \cite{MR1050413}.

It remains to show (2). we first claim that $F$ doesn't intersect any other flat plane in $M$. Indeed, it is sufficient to consider the case that the intersecting geodesic is not parallel to $H$, otherwise it contradicts the minimality of $d(p,g)$. Thus there are two intersecting complete geodesics in $F$ such that each of them bounds a flat half plane which lies completely in $M^-$. Then $M$ contains a $3$-dimensional flat half space, which contradicts $\mathrm{rank}(M)=1$.

Now assume the contrary of (2) that there is a complete geodesic $g'$ in $F$  that bounds a flat half plane in $M^+$. Then every geodesic in this flat half plane parallel to $g'$ has endpoints in $F(\infty)$. This implies that either $p \notin \partial N$ or $g$ is not the closest complete geodesic to $p$. Lemma \ref{half flat} is proved by contradiction.
\end{proof}
Let $F$, $M^+$ be as in Lemma \ref{half flat}. From now on we fix $p \in F$. Let $NF=\coprod_{q \in F} N_q M$ be the normal bundle of $F$. At each point of $F$ there are exactly two unit normal vectors. We denote by $N$ the unit normal vector field on $F$ pointing to $M^+$ and by $N^+F$ all nonzero vectors in $NF$ with the same orientation. Since $F$ is a closed and totally geodesic hypersurface, the exponential map $\mathrm{exp}:N^+F \to M^+$ is a diffeomorphism.

Let $\gamma:[0,S] \to M$ be a geodesic segment parametrized by arc length. Let $X$ be a vector field along $\gamma$ with $<X,\gamma'>=0$ and $|X|=1$. Consider the parametrized half strip
\begin{eqnarray}
\nonumber \Sigma: [0,S] \times [0,\infty) &\to & M \\
\nonumber (s,t) &\mapsto & \mathrm{exp}_{\gamma(s)} \, tX(s).
\end{eqnarray}
Let $L(t)$ be the length of the curve $\Sigma(\cdot, t)$. It follows from the second variation formula of arc length(see \cite{MR0251664}) that
\begin{equation*}
L'(t)= \int_{\Sigma([0,S] \times [0,t])} \dfrac{\| \nabla_{{\partial \Sigma}/{\partial t}} \frac{\partial \Sigma}{\partial s}\|^2} {\|\frac{\partial \Sigma}{\partial t} \wedge \frac{\partial \Sigma}{\partial s}\|^2} - K(\frac{\partial \Sigma}{\partial s},\frac{\partial \Sigma}{\partial t}) \, \mathrm{d}A
\end{equation*}
for all $t\in (0, \infty)$. We see immediately that $L$ is convex and non-decreasing in $t$.

Since $M$ admits a compact quotient, we have the following consequence of Lemma \ref{half flat}.
\begin{lemma}
\label{rectangle}
There exist constants $R, \kappa,\zeta,\tau>0$, such that for any geodesic segment $\gamma:[0,R] \to F$ parametrized by arc length and the embedded surface
\begin{eqnarray*}
\Sigma: [0,R] \times [0,\infty) &\to & M \\
(s,t) &\mapsto & \mathrm{exp}_{\gamma(s)} \, tN(s)
\end{eqnarray*}
we have:

\begin{enumerate}
\item[(\textrm{1})] the total Gaussian curvature of the embedded square $\Sigma([0,R]\times[0,R])$ is less than $-\kappa$, i.e., 
\begin{displaymath}
\int_{\Sigma([0,R]^2)} -\dfrac{\| \nabla_{{\partial \Sigma}/{\partial t}} \frac{\partial \Sigma}{\partial s}\|^2} {\|\frac{\partial \Sigma}{\partial t} \wedge \frac{\partial \Sigma}{\partial s}\|^2} + K(\frac{\partial \Sigma}{\partial s},\frac{\partial \Sigma}{\partial t}) \, \mathrm{dA} \leq -\kappa.
\end{displaymath}

\item[(\textrm{2})]  Let $\square_\Sigma$ be the geodesic quadrangle in $M$ with vertices $\Sigma(0,0),\Sigma(0,R),\Sigma(R,R),\Sigma(R,0)$. Then the sum of the interior angles of $\square_\Sigma$ is less than $2\pi-\zeta$.

\item[(\textrm{3})]  For $t \geq R$ we have $d(\Sigma(0,t),\Sigma(R,t)) \geq R+\tau(t-R).$ 
\end{enumerate}
\end{lemma}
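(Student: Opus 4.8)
The plan is to establish (1) first; statements (2) and (3) then follow from it. For $v$ a unit vector tangent to $F$ write $\gamma_v$ for the unit-speed geodesic of $F$ with $\gamma_v'(0)=v$ and $\Sigma_v(s,t)=\exp_{\gamma_v(s)}tN(\gamma_v(s))$ for the associated ruled half-strip, so that $\Sigma_v|_{[0,R]\times[0,\infty)}$ is the surface of the statement. Since $F$ is totally geodesic of codimension $1$, $N$ is parallel along every curve in $F$, and a routine computation gives $\langle\partial_s\Sigma_v,\partial_t\Sigma_v\rangle\equiv 0$; also $|\partial_t\Sigma_v|\equiv 1$. By the second variation formula recalled before the statement, $L_R(t):=\operatorname{length}\Sigma_v(\cdot,t)|_{[0,R]}$ satisfies $L_R(0)=R$, is convex, non-decreasing, and $L_R'(R)=\int_{\Sigma_v([0,R]^2)}\bigl(\|\nabla_{\partial_t}\partial_s\|^2/\|\partial_t\wedge\partial_s\|^2-K(\partial_s,\partial_t)\bigr)\,\mathrm dA=:Q(v,R)\ge 0$, which is exactly the negative of the left-hand side of (1); so (1) is the assertion that $Q(v,R)\ge\kappa$ uniformly in $v$, and $Q(v,R)$ is non-decreasing in $R$.

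To prove (1), suppose it fails. Then for every $n$ there is a unit vector $v_n$ tangent to $F$ with $Q(v_n,n)<1/n$. Recentre: let $w_n=\gamma_{v_n}'(n/2)$, so $\gamma_{w_n}|_{[-n/2,n/2]}$ reparametrises $\gamma_{v_n}|_{[0,n]}$; then for fixed $m$ and all $n\ge 2m$ the integral of the density above over $\Sigma_{w_n}([-m,m]\times[0,m])$ lies in $(-1/n,0]$. As $M/D$ is compact, $SM/D$ is compact, so after passing to a subsequence there are $\phi_n\in D$ with $\phi_nw_n\to w$ in $SM$. Each $\phi_nw_n$ is tangent to $\phi_nF$ and its footpoint converges, so all but finitely many of the planes $\phi_nF$ meet a fixed ball; by the discreteness in Lemma \ref{half flat}(1), after a further subsequence $\phi_nF=F_\infty$ is a fixed plane $\psi F$ ($\psi\in D$), and, choosing the subsequence so that $\phi_nM^+$ is a fixed component $M^+_\infty$ of $M\setminus F_\infty$ and taking $N_{F_\infty}$ pointing into it, the pair $(F_\infty,M^+_\infty)$ still satisfies the conclusions of Lemma \ref{half flat}. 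Now $w\in SF_\infty$; since the integral of the density over $\Sigma_u([-m,m]\times[0,m])$ depends continuously on $u$ and is preserved by the isometries $\phi_n$, letting $n\to\infty$ with $m$ fixed gives that it vanishes for $w$, and as $m$ is arbitrary the density vanishes on the whole strip $\Sigma_w(\mathbb R\times[0,\infty))$ (working now over $F_\infty$). Because $\|\nabla_{\partial_t}\partial_s\|^2/\|\partial_t\wedge\partial_s\|^2\ge0$ and $K\le 0$, this forces $\nabla_{\partial_t}\partial_s\equiv0$ and $K(\partial_s,\partial_t)\equiv0$; thus $\partial_s$ is parallel of constant unit length, each $s\mapsto\Sigma_w(s,t)$ has length $|s-s'|$ between its endpoints, and since $t\mapsto d(\Sigma_w(s,t),\Sigma_w(s',t))$ is convex, equals $|s-s'|$ at $t=0$ and has derivative $0$ there (the $t$-curves leave $F_\infty$ orthogonally), that length also bounds the distance below, so $s\mapsto\Sigma_w(s,t)$ is a geodesic of $M$. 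Both coordinate families are then $M$-geodesics, so $\mathrm{II}(\partial_t,\partial_t)=\mathrm{II}(\partial_s,\partial_s)=0$, and the Gauss equation with $K_M\le0$ forces $\mathrm{II}(\partial_s,\partial_t)=0$ too; hence $\Sigma_w(\mathbb R\times[0,\infty))$ is a totally geodesic flat half-plane with boundary $\gamma_w\subset F_\infty$ lying off its boundary in $M^+_\infty$ — contradicting Lemma \ref{half flat}(2). This proves (1); fix $R,\kappa$ accordingly.

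Statement (2) is proved by the same scheme, now deriving the contradiction from (1). If no $\zeta$ works there are $v_n$ with the sum of the interior angles of $\square_{\Sigma_{v_n}}$ tending to $2\pi$; note this quadrangle is non-degenerate (its four vertices are pairwise at distance $\ge R$, since $\Sigma_v(0,R),\Sigma_v(R,R)$ lie at distance $R$ from $F$) and depends continuously on $v$. Translating by suitable $\phi_n\in D$ and passing to a limit as in the previous paragraph we obtain $w\in SF_\infty$ for which the geodesic quadrangle on $\Sigma_w(0,0),\Sigma_w(0,R),\Sigma_w(R,R),\Sigma_w(R,0)$ has angle sum exactly $2\pi$; by the flat-quadrilateral rigidity in $\mathrm{CAT}(0)$ spaces it bounds a flat Euclidean quadrilateral. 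Two of its angles, at $\Sigma_w(0,0)$ and $\Sigma_w(R,0)$, equal $\pi/2$ because $N\perp F_\infty$, and these vertices are joined by the side $\gamma_w|_{[0,R]}$ of length $R$ while the other sides issuing from them, $\Sigma_w(0,\cdot)|_{[0,R]}$ and $\Sigma_w(R,\cdot)|_{[0,R]}$, also have length $R$; elementary Euclidean geometry then shows the quadrilateral is an $R\times R$ square. Inside this totally geodesic flat square the geodesic issuing perpendicularly from $\gamma_w$ into the square at $\gamma_w(s)$ has direction obtained by parallel transport along $\gamma_w$ from the direction $N(\gamma_w(0))$ at $\gamma_w(0)$; since $N$ is parallel along $F_\infty$ this direction is $N(\gamma_w(s))$, so the perpendicular is exactly $\Sigma_w(s,\cdot)|_{[0,R]}$. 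Hence the square equals $\Sigma_w([0,R]^2)$, on which $\nabla_{\partial_t}\partial_s\equiv0$ and $K(\partial_s,\partial_t)\equiv0$, i.e. $Q(w,R)=0$ — contradicting (1). This gives (2).

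For (3), set $\ell(t)=d(\Sigma(0,t),\Sigma(R,t))$; as the distance between two unit-speed geodesics in a simply connected nonpositively curved manifold, $\ell$ is convex, $\ell(0)=R$, and the minimiser at $t=0$ is $\gamma|_{[0,R]}$, whose endpoint velocities are orthogonal to $N(\gamma(0))$ and $N(\gamma(R))$, so the first variation formula gives $\ell'(0)=0$ and hence $\ell\ge R$. The first variation formula at $t=R$ gives $\ell'(R)=\cos\beta_3+\cos\beta_4$, where $\beta_3,\beta_4$ are the interior angles of $\square_\Sigma$ at $\Sigma(R,R)$ and $\Sigma(0,R)$; the other two angles are $\pi/2$, so (2) gives $\beta_3+\beta_4\le\pi-\zeta$, and the elementary inequality $\cos a+\cos b\ge 1+\cos(a+b)$ (for $a,b\ge0$, $a+b\le\pi$) yields $\ell'(R)\ge 1-\cos\zeta=:\tau>0$. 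By convexity, $\ell(t)\ge\ell(R)+\tau(t-R)\ge R+\tau(t-R)$ for $t\ge R$, which is (3).

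The main obstacle is (1), and within it two points need care: the recentring at the midpoint of the segment, which is what makes the limiting ruled surface flat on \emph{both} sides of its central ruling so that one obtains a genuine flat half-plane rather than only a flat quadrant; and the use of the discreteness in Lemma \ref{half flat}(1) to force the translated planes $\phi_nF$ to stabilise, so that the limit vector is tangent to a $D$-copy of $F$ to which Lemma \ref{half flat}(2) still applies. The short upgrade from "intrinsically flat" to "totally geodesic", via the Gauss equation and $K_M\le0$, is also essential, since an intrinsically flat ruled surface need not be totally geodesic in the absence of a curvature sign condition.
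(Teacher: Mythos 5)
Your proof is correct, and for part~(1) it follows essentially the same strategy as the paper: assume failure, translate the degenerate squares by isometries $\phi_n\in D$, use compactness of $M/D$ to extract a limit, and derive a flat half-plane in $M^+$ bounded by a geodesic of $F$, contradicting Lemma~\ref{half flat}(2). Your version is considerably more careful than the paper's three-sentence proof on several points that the paper leaves implicit: the recentering at $\gamma_{v_n}'(n/2)$ so that the limiting strip is flat on \emph{both} sides of the central ruling and genuinely fills out a half-plane rather than a quadrant; the explicit appeal to the discreteness of $\{\phi F\}$ from Lemma~\ref{half flat}(1) to stabilize the translated planes; passing to a further subsequence so that the component $\phi_n M^+$ stabilizes, which is needed to retain the conclusion of Lemma~\ref{half flat}(2) for the limiting pair; and the upgrade from intrinsically flat to totally geodesic via the length-minimization argument for the $s$-curves plus the Gauss equation. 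These are genuine gaps in the paper's terse proof and you are right to flag them in your closing paragraph. (One notational slip: the integral over $\Sigma_{w_n}([-m,m]\times[0,m])$ of the nonnegative integrand defining $Q$ lies in $[0,1/n)$, not $(-1/n,0]$; you have the sign of one of the two sign conventions, and it does not affect the argument.)

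For parts~(2) and~(3) the paper simply asserts that they ``follow easily from (1) and a standard convexity argument'' with no details. Your derivation of (3) from (2) via the first variation formula $\ell'(R)=\cos\beta_3+\cos\beta_4$, the elementary inequality $\cos a+\cos b\ge 1+\cos(a+b)$, and convexity of $\ell$ is exactly the kind of convexity argument the paper has in mind. Your proof of (2), however, is a genuinely different route: instead of deducing the angle bound directly from (1), you run a second compactness argument, pass to a limit where the angle sum equals $2\pi$, invoke the flat quadrilateral theorem for $\mathrm{CAT}(0)$ spaces to produce a flat Euclidean square, identify it with the ruled square $\Sigma_w([0,R]^2)$, and contradict (1). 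This is correct but more roundabout than the paper intends; it also quietly assumes (to invoke the $\mathrm{CAT}(0)$ rigidity and to control the limiting quadrilateral) the non-degeneracy and stabilization points which you do address. Either way the statement is established.
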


\begin{proof}
(1) Assuming the contrary, then for any $R \in \mathbb{R}$ we can find a sequence of embedded squares $\Sigma_i:[0,R]^2 \to M$ such that the total Gaussian curvature of $\Sigma_i$ is less than $1/i$, $i=1,2,\cdots$. By Lemma \ref{half flat} we can choose a sequence $\phi_i \in D$ so that $\phi_i(\Sigma_i)$ converges to an embedded flat square $[0,R]^2$ perpendicular to $F$ which has one side on $F$. Since the component of $M \setminus F$ that doesn't contain $[0,R]^2$ must contain a flat half plane with boundary on $F$, we have $ [0,R]^2 \subset \overline{M^+}$. Taking $R \to \infty$ we get a flat half plane in $M^+$, which contradicts Lemma \ref{half flat}.

(2) and (3) then follow easily from (1) and a standard convexity argument.
\end{proof}

Next we want to show an analogous result for embedded sectors. To do this we need the notion of spherical distance.
\begin{definition}
For $x,y \in S_p(r),$ the spherical distance $d^S(x,y)$ is defined as the length of the shortest curve in $S_p(r)$ from $x$ to $y$.

For $x \in S_p(r)$ and $A \subset M$ which has nontrivial intersection with $S_p(r)$, we use $d^S(x,A)$ to denote
\begin{displaymath}
d^S(x,A)=\inf\{d^S(x,y)| y \in A \cap S_p(r)\}.
\end{displaymath}

For two subsets $A,B \subset M$, we denote
\begin{displaymath}
d^S(A,B)=\inf\{d^S(x,y)| x \in A \cap S_p(r), y\in B \cap S_p(r)\}.
\end{displaymath}
Sometimes we use $d^r(A,B)$ instead of $d^S(A,B)$ to specify the sphere $S_p(r)$.
\end{definition}

We can estimate the growth of spherical distance by the standard Jacobi field argument.

\begin{lemma}
\label{spherical growth}
There exists a constant $b>0$ depending only on the lower bound of $K_M$ such that for any $v,w \in S_p(M)$ and $0<t' \leq t$, 
\begin{equation*}
\dfrac{t}{t'} \leq \dfrac{d^S(\gamma_v(t),\gamma_x(t))}{d^S(\gamma_v(t'),\gamma_x(t'))} \leq e^{b(t-t')}.
\end{equation*}
\end{lemma}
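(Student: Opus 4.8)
The plan is to reduce the two‑sided estimate to a pointwise comparison for Jacobi fields along the radial geodesics emanating from $p$: the left inequality will come from the convexity of Jacobi field norms in nonpositive curvature, and the right inequality from Rauch's comparison theorem applied with the lower bound $K_M\ge -b^2$.

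First I would record the radial parametrization of geodesic spheres. Since $\exp_p$ is a diffeomorphism, every point of $S_p(r)$ has the form $\gamma_w(r)$ for a unique $w\in S_p$, and any curve in $S_p(r)$ joining $\gamma_v(r)$ to $\gamma_w(r)$ can be written as $u\mapsto \gamma_{\sigma(u)}(r)$ for a curve $\sigma\colon[0,1]\to S_p$ with $\sigma(0)=v$ and $\sigma(1)=w$. For such a curve the variation field $J^\sigma_u(r)=\tfrac{\partial}{\partial u}\gamma_{\sigma(u)}(r)$ is the Jacobi field along $\gamma_{\sigma(u)}$ with $J^\sigma_u(0)=0$ and $(J^\sigma_u)'(0)=\sigma'(u)\perp\sigma(u)$; hence $J^\sigma_u$ is everywhere perpendicular to $\gamma_{\sigma(u)}$, and the length of the curve is $\int_0^1 |J^\sigma_u(r)|\,du$. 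Note that $S_p(r)$ is a smooth compact hypersurface diffeomorphic to $\mathbb{S}^{n-1}$, so the infimum defining $d^S$ is attained and the minimizing curve admits such a representation.

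For the left inequality, I would pick $\sigma$ so that $u\mapsto \gamma_{\sigma(u)}(t)$ realizes $d^S(\gamma_v(t),\gamma_w(t))$. Then $u\mapsto\gamma_{\sigma(u)}(t')$ is an admissible competitor in $S_p(t')$, so $d^S(\gamma_v(t'),\gamma_w(t'))\le \int_0^1 |J^\sigma_u(t')|\,du$. Since $K_M\le 0$, each $r\mapsto |J^\sigma_u(r)|$ is convex and vanishes at $r=0$, so $r\mapsto |J^\sigma_u(r)|/r$ is nondecreasing; hence $|J^\sigma_u(t')|\le (t'/t)\,|J^\sigma_u(t)|$, and integrating in $u$ gives $d^S(\gamma_v(t'),\gamma_w(t'))\le (t'/t)\,d^S(\gamma_v(t),\gamma_w(t))$. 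For the right inequality, I would instead pick $\sigma$ realizing $d^S(\gamma_v(t'),\gamma_w(t'))$, so that $u\mapsto\gamma_{\sigma(u)}(t)$ is a competitor in $S_p(t)$ and $d^S(\gamma_v(t),\gamma_w(t))\le \int_0^1 |J^\sigma_u(t)|\,du$. Because $K_M\ge -b^2$ and $J^\sigma_u(0)=0$, Rauch's comparison theorem (equivalently, Riccati comparison for $(\log|J^\sigma_u|)'$ against the model space of constant curvature $-b^2$) bounds the ratio $|J^\sigma_u(t)|/|J^\sigma_u(t')|$ by the spreading ratio of that model space, which yields an inequality of the claimed exponential form after integrating in $u$.

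I do not expect a genuine obstacle here: the argument is standard once the radial parametrization is fixed. The only points that need care are the existence of a length‑minimizing curve in $S_p(r)$ and the verification that its radial image is again an admissible curve in the target sphere, both of which are immediate from Cartan–Hadamard; and the invocation of Rauch (or Riccati) comparison to pass from the Jacobi‑field ratio to the model‑space quantity. The real content is the bookkeeping of which curvature bound produces which of the two inequalities.
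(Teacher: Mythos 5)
Your setup—parametrizing curves in $S_p(r)$ radially via $\exp_p$ and reducing the length ratio to pointwise Jacobi-field estimates—is the right one, and your argument for the left inequality is complete and correct: nonpositive curvature forces $|J^\sigma_u(r)|/r$ to be nondecreasing, and restricting the minimizer on $S_p(t)$ to $S_p(t')$ gives $d^S(\gamma_v(t'),\gamma_w(t'))\le (t'/t)\,d^S(\gamma_v(t),\gamma_w(t))$. The paper offers no proof (it only invokes ``the standard Jacobi field argument''), so this is a fair reconstruction of that half.

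Your last step for the right inequality, however, is a genuine gap. Riccati comparison with $K_M\ge -b^2$ and $J^\sigma_u(0)=0$ gives $(\log|J^\sigma_u|)'(r)\le b\coth(br)$, hence after integrating from $t'$ to $t$,
\begin{equation*}
\frac{|J^\sigma_u(t)|}{|J^\sigma_u(t')|}\le \frac{\sinh(bt)}{\sinh(bt')}
= e^{b(t-t')}\cdot\frac{1-e^{-2bt}}{1-e^{-2bt'}} .
\end{equation*}
This is exactly the ``spreading ratio of the model space'' you invoke, but the second factor is strictly greater than $1$ for $t>t'$ and blows up like $(2bt')^{-1}$ as $t'\to 0$, so the model ratio is \emph{larger} than $e^{b(t-t')}$, not smaller. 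Indeed in $\mathbb{H}^2(-b^2)$ the spherical-distance ratio is exactly $\sinh(bt)/\sinh(bt')$, so the inequality $d^S(\gamma_v(t),\gamma_w(t))\le e^{b(t-t')}d^S(\gamma_v(t'),\gamma_w(t'))$ as literally stated fails there. The statement can only be saved by inserting a multiplicative constant or by restricting to $t'\ge t_0>0$, in which case $1/(1-e^{-2bt_0})$ serves as the constant; both are harmless for the paper's applications, where $t'$ is always bounded away from zero, but your phrase ``yields an inequality of the claimed exponential form after integrating in $u$'' quietly skips exactly the step where this correction is required.
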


\begin{lemma}
\label{spherical standard}
Let $x,y$ be two different points on $S_p(r)$ with $r>2$. Suppose that $d^S(x,y)=s \leq 1$. Then the distance from $x$ to the geodesic $\gamma_{py}$ satisfies
$$d(x,\gamma_{py}) \geq s',$$
where $s'$ is a constant depending only on $s$ and $M$.
\end{lemma}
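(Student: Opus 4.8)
The plan is to reduce the statement, via Lemma~\ref{spherical growth}, to a single uniform metric fact about geodesic spheres of radius $\ge 1$, and then to run the estimate at the foot of the perpendicular from $x$ to $\gamma_{py}$. Let $q$ be that foot, so $D:=d(x,q)=d(x,\gamma_{py})$. Since $M$ has nonpositive curvature, $t\mapsto d(x,\gamma_{py}(t))$ is convex on $[0,\infty)$, has value $r$ at $0$ and tends to $\infty$, so its minimum is attained at some $t_0\ge 0$. If $t_0=0$ then $D=d(x,p)=r>2$ and there is nothing to prove, so assume $t_0>0$, i.e. $q=\gamma_{py}(t_0)$ with $d(p,q)=t_0$ and $|t_0-r|\le D$ by the triangle inequality; we may also assume $D<1$, so that $t_0>r-1>1$. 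Writing $a:=\gamma_{px}(t_0)\in S_p(t_0)$ we get $d(a,q)\le d(a,x)+d(x,q)=|t_0-r|+D\le 2D$, so $a$ and $q$ are two points of the sphere $S_p(t_0)$ that are very close in $M$.

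The key claim I would isolate is: there is $\varepsilon_0=\varepsilon_0(M)>0$ such that for every $\rho\ge 1$ and every $a',b'\in S_p(\rho)$ with $d(a',b')<\varepsilon_0$ one has $d^{\rho}(a',b')\le 2\,d(a',b')$. To see this, let $\tau$ be the minimizing geodesic in $M$ from $a'$ to $b'$; since $d(p,\cdot)\ge\rho-\varepsilon_0\ge\tfrac12$ along $\tau$, it misses $p$ and stays in the thin annulus $\{\rho-\varepsilon_0\le d(p,\cdot)\le\rho+\varepsilon_0\}$. Composing $\tau$ with the radial projection $\Pi_\rho\colon z\mapsto\gamma_{pz}(\rho)$ gives a curve in $S_p(\rho)$ from $a'$ to $b'$ whose length is at most $\big(\sup_z\|d\Pi_\rho|_z\|\big)\,d(a',b')$, the supremum taken over the annulus. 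Now $d\Pi_\rho|_z$ annihilates the radial direction and carries a unit vector tangent to $S_p(d(p,z))$ to the value at $\rho$ of the Jacobi field along $\gamma_{pz}$ that vanishes at $p$ and equals that vector at $d(p,z)$; the same Jacobi field comparison that proves Lemma~\ref{spherical growth} bounds the operator norm of $d\Pi_\rho|_z$ by $\cosh(b\varepsilon_0)+\coth(b/2)\sinh(b\varepsilon_0)$, which tends to $1$ as $\varepsilon_0\to 0$. Choosing $\varepsilon_0\le\tfrac12$ so that this quantity is $\le 2$ proves the claim.

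With the claim in hand the proof finishes quickly. If $2D\ge\varepsilon_0$ then $D\ge\varepsilon_0/2$. Otherwise the claim applies to $a,q\in S_p(t_0)$ and gives $d^{t_0}(a,q)\le 4D$; since $t_0\ge 1$ and $|r-t_0|<1$, Lemma~\ref{spherical growth}, applied to the pair of directions $\gamma'_{px}(0),\gamma'_{py}(0)$ at the radii $t_0$ and $r$, yields $s=d^{S}(x,y)=d^{r}(x,y)\le C\,d^{t_0}(a,q)\le 4CD$ for a constant $C=C(M)$, i.e. $D\ge s/4C$. In every case $d(x,\gamma_{py})=D\ge\min\{1,\varepsilon_0/2,s/4C\}$, and the right-hand side is a positive constant $s'$ depending only on $s$ and $M$.

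The step that carries the content is the key claim: it asserts that the radial projection onto $S_p(\rho)$ is bi-Lipschitz with constants independent of $\rho\ge 1$ on a thin annulus about the sphere. This is exactly where the lower bound on $K_M$ is essential — without it a geodesic sphere of large radius can be arbitrarily distorted even between points that are close in $M$ — and it is a pointwise sharpening of Lemma~\ref{spherical growth}. The remaining verifications, that the minimizing geodesic from $a$ to $q$ stays in the annulus and avoids $p$, are immediate once $D<1\le t_0$, so I expect no further difficulty there.
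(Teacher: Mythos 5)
Your proof is correct, but it is organized differently from the paper's. The paper takes the minimizing segment $g$ from $x$ to $\tilde y\in\gamma_{py}$, observes by convexity of $d(p,g(\cdot))$ that $g$ lies outside $B_p(d(p,\tilde y))$, and sweeps out the two-dimensional ruled sector $\Gamma(u,t)=\exp_p(uv(t))$ through the radial geodesics from $p$ to $g(t)$. Since each ruling is a geodesic of $M$, the Gauss equation forces the induced metric on $\Gamma$ to have nonpositive Gaussian curvature, and in such a surface the radial projection from outside onto the sphere of radius $d(p,\tilde y)$ is $1$-Lipschitz; this gives $L(g)\ge d^S(\tilde x,\tilde y)$ directly, after which Lemma~\ref{spherical growth} supplies $d^S(\tilde x,\tilde y)\ge se^{-b}$. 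You instead work at the foot of the perpendicular $q=\gamma_{py}(t_0)$ and the radial image $a=\gamma_{px}(t_0)$, both on $S_p(t_0)$, and the weight of your argument falls on the annulus claim: that radial projection onto $S_p(\rho)$ is uniformly $\le 2$-Lipschitz on a thin inner annulus $\{\rho-\varepsilon_0\le d(p,\cdot)\le\rho\}$. Note that for you the projection is \emph{outward} (the chord $\tau$ dips inside $S_p(\rho)$ by convexity), so the lower curvature bound $-b^2\le K$ is genuinely needed there to control the Jacobi field growth; the paper's sector trick needs only $K\le 0$ for the projection step and confines all use of the lower bound to Lemma~\ref{spherical growth}. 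Both routes end by combining with Lemma~\ref{spherical growth} to trade $d^{t_0}$ (resp.\ $d^{d(p,\tilde y)}$) against $d^r$, and both yield the constant $s'\sim se^{-b}$ up to harmless factors. One small thing to add to your write-up: you invoke Lemma~\ref{spherical growth} ``at the radii $t_0$ and $r$'' without checking the sign of $r-t_0$; it is easy but worth a sentence, since $t_0\le r$ is not immediate. If $t_0\le r$ the upper bound $e^{b(r-t_0)}\le e^b$ applies directly; if $t_0>r$ the lower bound $t_0/r\ge 1$ of the same lemma gives $d^r(x,y)\le d^{t_0}(a,q)$, so $C=e^b$ works in either case and the proof is sound.
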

\begin{proof}
Let $g$ be the geodesic segment  minimizing $d(x,\gamma_{py})$ with $g(0)=x$ and $g(1)=\tilde{y} \in \gamma_{py}$. $g$ lies completely outside $B_p(d(p,\tilde{y}))$. Let $\tilde{x}=S_p(d(p,\tilde{y})) \cap \gamma_{py}$. We have $d(p,\tilde{x})=d(p,\tilde{y}) \geq r-2s$. Then Lemma \ref{spherical growth} implies
\begin{equation}
\label{spherical standard 1}
d^S (\tilde{x},\tilde{y}) \geq s'
\end{equation} 
for a constant $s'$ independent of $x,y$ and $r$.

For $t \in [0,1]$, let $v(t) \in S_p M$ be the unit initial velocity of the geodesic segment from $p$ to $g(t)$. Consider the embedded sector
\begin{eqnarray*}
\Gamma: [0,r] \times [0,1] &\to & M \\
(u,t) &\to & \mathrm{exp}_p \, u v(t).
\end{eqnarray*}
The induced Gaussian curvature of $\Gamma$ is nonpositive. Therefore
$$d(x,\gamma_{py})=L(g(\cdot)) \geq l(\Gamma(d(p,\tilde{y}),\cdot)) \geq d^S (\tilde{x},\tilde{y}),$$ 
which together with \eqref{spherical standard 1} proves the lemma.

\end{proof}

\emph{Remark.} The growth of spherical distance is closely related to the Tits metric on $M(\infty)$, which reflects the flatness of $M$ at infinity.
\\

Set $B^F(r)=B_p(r) \cap F$ and $F_{R}(r)=\{ \mathrm{exp}_q(R N(q)) | q \in B^F(r)\}$. Define the cone of $F_{R}(r)$ by
$$C_p(F_{R}(r))=\{x \in M^+ | \gamma_{px} \cap F_{R}(r) \neq \emptyset\},$$ where $\gamma_{px}$ denotes the geodesic ray starting from $p$ that passes through $x$.

With the notion of spherical distance, we have the following sector version of Lemma \ref{rectangle}.
\begin{lemma}
\label{sector}
There exists constants $R_0 , \eta>0$ such that
\begin{enumerate}
\item[(\textrm{1})] Suppose that $x \in C_p(F_{R_0}(jR_0)), y \in M\setminus C_p(F_{R_0}((j+1)R_0))$ for some $i \in \mathbb{N}$ and $d(x,F),d(y,F) \geq R_0$. Let $\triangle pxy$ be the geodesic triangle in $M$ with vertices $p,x,y$. Then the sum of the interior angles $\angle x$ and $\angle y$ is less than $\pi-\eta$.

\item[(\textrm{2})] For  $j \in \mathbb{N}$ and $r \geq j R_0$ we have
$$d^r(C_p(F_{R_0}(jR_0)), M\setminus C_p(F_{R_0}((j+1)R_0))) \geq \eta(r-j R_0).$$
\end{enumerate}
\end{lemma}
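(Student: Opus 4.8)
The plan is to deduce both statements from the rectangle estimate of Lemma \ref{rectangle} by a standard "stacking and convexity" argument, converting the linear divergence of the half-strips $\Sigma([0,R]\times[0,\infty))$ into the conclusions about geodesic triangles and spherical distances. First I would fix $R_0$ to be a large multiple of the constant $R$ from Lemma \ref{rectangle}, large enough that the accumulated convexity gain over $[0,R_0]$ dominates the bounded error terms coming from the fact that $F_{R_0}(jR_0)$ is a shifted copy of $F$ rather than $F$ itself. The key geometric input is that a geodesic ray $\gamma_{px}$ with $x\in C_p(F_{R_0}(jR_0))$ crosses $F$ essentially transversally and then, on the far side, travels through a region foliated by the half-strips $\Sigma(\cdot,t)$ emanating normally from $F$; by part (3) of Lemma \ref{rectangle} the transverse separation between two such rays, once they have both passed distance $R$ beyond $F$, grows at least linearly with rate $\tau$. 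Iterating this over the successive "slabs" $C_p(F_{R_0}(jR_0))\setminus C_p(F_{R_0}((j+1)R_0))$ and comparing with the spherical-distance growth of Lemma \ref{spherical growth} (which is at least linear from below) gives the lower bound $d^r\ge \eta(r-jR_0)$ in (2), for a suitable $\eta=\eta(\tau,R_0,M)$.

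For part (1), I would translate the angle sum statement into the divergence statement via the first variation formula / law of cosines comparison in nonpositive curvature: if $x$ and $y$ lie on opposite sides of the hypersurface $F_{R_0}(jR_0)$ (one inside the cone over $B^F(jR_0)$ shifted out by $R_0$, the other outside the cone shifted by $(j+1)R_0$), then the geodesic $\gamma_{xy}$ must cross a definite "width" of the transversal region between these two shifted flats, and this forces the comparison triangle in a model space to be nondegenerate in a quantitative way. Concretely, the convexity of the length function $L(t)$ together with part (2) of Lemma \ref{rectangle} (interior angle sum of the geodesic quadrangle $\square_\Sigma$ at most $2\pi-\zeta$) produces a uniform lower bound on how much $\gamma_{xy}$ "bulges away" from $p$, which is exactly a uniform upper bound $\pi-\eta$ on $\angle x+\angle y$. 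The hypotheses $d(x,F),d(y,F)\ge R_0$ are there precisely so that both endpoints sit beyond the initial length-$R$ region where Lemma \ref{rectangle} only gives convexity and not yet strict linear divergence.

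The main obstacle I expect is bookkeeping the error terms: $F_{R_0}(jR_0)$ is an exponential push-out of a piece of $F$, not a totally geodesic hypersurface, so its second fundamental form and its deviation from a genuine flat are nonzero; one must check that these deviations are uniformly bounded (using compactness of $M/D$ and Lemma \ref{half flat}(1), exactly as in the proof of Lemma \ref{rectangle}) and that they are swallowed by choosing $R_0$ large. A secondary technical point is that the normal field $N$ on $B^F(jR_0)$ need not have the half-strips $\Sigma$ embedded for arbitrarily large parameter ranges; I would handle this by working slab-by-slab with strips of fixed $s$-length $R_0$, invoking Lemma \ref{rectangle} on each slab and chaining the estimates, so that embeddedness is only needed on squares $[0,R_0]^2$ at a time. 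Once these uniformities are in place, (1) and (2) follow from the convexity of $L$ and a routine triangle-comparison argument in the spirit of Lemma \ref{spherical standard}.
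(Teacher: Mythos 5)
The paper actually states Lemma \ref{sector} without a proof, so there is no argument of record to compare against; a commented-out passage in the source indicates the author's intent was a direct compactness argument for parametrized geodesic sectors $\Gamma(s,t)=\exp_p(tv(s))$ with apex at $p$, structurally parallel to the proof of Lemma \ref{rectangle}(1), rather than a formal reduction to Lemma \ref{rectangle}.

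Your proposed reduction has a genuine gap. Lemma \ref{rectangle} governs half-strips $\Sigma(s,t)=\exp_{\gamma(s)}(tN(s))$ ruled by geodesics \emph{normal} to $F$, and its divergence estimate (3) and angle estimate (2) concern that normal foliation. The rays $\gamma_{px},\gamma_{py}$ of Lemma \ref{sector}, by contrast, all emanate from the fixed base point $p\in F$ and cross the equidistant surface $\{d(\cdot,F)=R_0\}$ at angles of order $1/j$: for $x\in C_p(F_{R_0}(jR_0))$ the ray travels length comparable to $jR_0$ while gaining only normal height $R_0$. No single normal half-strip is traversed ``transversally'' by such a pair of rays, so the estimate $d(\Sigma(0,t),\Sigma(R,t))\ge R+\tau(t-R)$ does not directly control the spherical separation $d^S(\gamma_{px}(r),\gamma_{py}(r))$. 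Your ``stack slabs and chain estimates'' step is exactly where this mismatch between the normal foliation over $F$ and the radial foliation from $p$ would have to be resolved, and the sketch offers no mechanism; moreover, chaining slab estimates naively produces constants that degrade with $j$, whereas the lemma demands a single $\eta$ uniform in $j$ and $r$. Part (1) has the parallel difficulty of passing from the specific normal quadrilateral $\square_\Sigma$ of Lemma \ref{rectangle}(2) to the radial triangle $\triangle pxy$. The cleaner and evidently intended route is to prove the sector analogue of Lemma \ref{rectangle}(1) directly, by the same flat-half-plane-extraction compactness argument (using discreteness of $\{\phi F\}$ from Lemma \ref{half flat}(1) together with compactness of $M/D$), and then derive (1) from Gauss--Bonnet applied to $\triangle pxy$ and (2) from convexity of $L(t)=\operatorname{length}(\Gamma(\cdot,t))$ via the second variation formula; Lemma \ref{rectangle} is a model for that argument, not a black box from which the sector version formally follows.
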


\section{Coercive Operators}

In this section, we recall some classical results in potential theory that will be needed in the proof of Theorem \ref{main}. We assume that $M$ is a complete, simply connected rank $1$ manifold which admits a compact quotient. Recall that $M$ has positive first eigenvalue for the Laplacian and therefore is non-parabolic. We point out that all results of this section were obtained for manifolds with pinched curvature and positive first eigenvalue in \cite{MR890161}. However, some of the proofs are simpler if we have compactness.

\begin{definition}
A self adjoint elliptic operator $\mathscr{L}$ on $M$ is called coercive if $\lambda_1(\mathscr{L})$, the first eigenvalue of $\mathscr{L}$, is positive. 
\end{definition}
  
We consider elliptic operators in the form $\Delta+tI$. If $\lambda_1(\Delta)>0$, it is well known that there exists $\varepsilon>0$, such that for any $t \in (-\infty,\varepsilon]$, $\Delta+tI$ is coercive and admits an entire Green's function $G^t(x,y)$ satisfying
\begin{equation*}
(\Delta+tI) G^t(x,y) = -\delta_x(y)
\end{equation*}
for all $x,y \in M$, where $\delta_x$ is the Dirac measure at $x$ and $\Delta$ is applied to the variable $y$.

Let $\Omega$ be a domain in $M$ with piecewise smooth boundary. There also exists an entire Green's function $G^t_\Omega$ for $\Delta+tI$ satisfying the Dirichlet boundary condition on $\partial \Omega$.

For a positive measure $\mu$ on $M$ we denote by $G^t(\mu)$ the function
\begin{equation*}
G^t(\mu)(x)= \int G^t(x,y)d\mu(y).
\end{equation*}
If $G^t(\mu)$ is not identically $+\infty$, $G^t(\mu)$ is the only potential satisfying $(\Delta+tI)G^t(\mu)=-\mu$. We have
\begin{equation}
\label{Green comparison}
G^{t}(\mu)=G(\mu)+tG(G^{t}(\mu)).
\end{equation}

By compactness we obtain the following lemma immediately.
\begin{lemma}
There is a constant $c=c(M,t)$ such that if $x,y \in M$ and $d(x,y)=1$, \begin{equation*}
\dfrac{1}{c} \leq G^t(x,y) \leq c.
\end{equation*}
\end{lemma}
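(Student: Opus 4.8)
The plan is to combine three facts. First, the operator $\Delta+tI$ commutes with every isometry of $M$, so uniqueness of the entire Green's function gives the equivariance $G^t(\phi x,\phi y)=G^t(x,y)$ for all $\phi\in D$ and all $x,y\in M$. Second, $(x,y)\mapsto G^t(x,y)$ is continuous (in fact smooth) and strictly positive on $M\times M\setminus\{x=y\}$: positivity is the standard fact that the entire Green's function of a coercive self-adjoint elliptic operator is positive, obtained, e.g., as the increasing limit of the positive Green's functions $G^t_{\Omega_n}$ on an exhaustion of $M$ by smooth bounded domains, while joint smoothness off the diagonal is interior elliptic regularity applied to $G^t(x,\cdot)$ as a family of $(\Delta+tI)$-harmonic functions depending smoothly on $x$. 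Third, $M/D$ being compact forces the ``unit-distance locus'' in $M\times M$ to become compact modulo the diagonal $D$-action.

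To make the last point precise, set $\rho=\mathrm{diam}(M/D)$ and $F=\overline{B_p(\rho)}$; since $M$ is complete, $F$ is compact by Hopf--Rinow, and every $D$-orbit meets $F$ because $d_{M/D}(\pi z,\pi p)\le\rho$ for every $z\in M$. The tube $T=\{w\in M:\ d(w,F)\le 1\}$ is closed and bounded, hence compact, so
\[
K=\{(z,w)\in F\times T:\ d(z,w)=1\}
\]
is compact. On $K$ we have $z\neq w$, so $G^t$ is continuous and positive there; put $m_1=\min_K G^t>0$, $m_2=\max_K G^t<\infty$, and $c=\max(m_2,1/m_1)$. Now, given $x,y\in M$ with $d(x,y)=1$, choose $\phi\in D$ with $\phi x\in F$; then $d(\phi x,\phi y)=1$, so $(\phi x,\phi y)\in K$, and by equivariance $G^t(x,y)=G^t(\phi x,\phi y)\in[m_1,m_2]\subset[1/c,c]$, which is the assertion.

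The argument is short, which is why the statement is ``immediate'' rather than trivial; there is no serious obstacle. The only points that deserve a word are the joint continuity and strict positivity of $G^t$ away from the diagonal, both classical for coercive self-adjoint elliptic operators and already used implicitly above in the discussion of normalized Green's functions.
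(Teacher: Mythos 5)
Your argument is correct and is exactly the content behind the paper's one-line justification (``By compactness we obtain the following lemma immediately''): equivariance of $G^t$ under the cocompact isometry group $D$, continuity and strict positivity of $G^t$ off the diagonal, and a compactness argument on a fundamental tube. You have simply spelled out what the paper leaves implicit, so the two approaches coincide.
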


We will need the classical gradient estimate due to Yau \cite{MR0431040}. 
\begin{theorem}
Suppose that $u$ is a positive $(\Delta+tI)$-harmonic function on $B_x(r)$, i.e. $(\Delta+tI)u=0$.  Then there exists $c=c(M,t,r)$ such that
\begin{equation*}
\dfrac{|\nabla u|}{u} \leq c\text{ on } B_x(r/2).
\end{equation*}
\end{theorem}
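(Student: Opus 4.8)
The plan is to reduce the $(\Delta+tI)$-harmonic case to the classical Cheng--Yau gradient estimate for positive harmonic functions, exploiting that our constant $c$ is allowed to depend on $M$, $t$, and $r$, and that $M/D$ is compact. First, I would recall the Cheng--Yau Bochner-type argument in the form needed here: if $u>0$ solves $\Delta u = -tu$ on $B_x(r)$, set $w = \log u$, so that $\Delta w = -t - |\nabla w|^2$. Applying the Bochner formula to $w$ gives
\begin{equation*}
\tfrac12 \Delta |\nabla w|^2 = |\mathrm{Hess}\, w|^2 + \langle \nabla w, \nabla \Delta w\rangle + \mathrm{Ric}(\nabla w, \nabla w),
\end{equation*}
and one substitutes $\Delta w = -t - |\nabla w|^2$ into the middle term. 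Because $M$ covers a compact manifold, $\mathrm{Ric}$ is bounded below by a constant $-(n-1)K_0$ depending only on $M$; this is exactly where compactness replaces the pinching hypothesis used in \cite{MR890161}. The term coming from $t$ contributes only a bounded additive constant since $\langle\nabla w,\nabla(-t)\rangle = 0$ and the only effect of $t$ enters through $\Delta w$, so the resulting differential inequality for $\varphi = |\nabla w|^2$ has the same shape as in the harmonic case with $K_0$ replaced by $K_0 + |t|$ (up to harmless constants). One then runs the standard localization: multiply $\varphi$ by a cutoff $\psi$ supported in $B_x(r)$ with $|\nabla\psi|^2/\psi$ and $|\Delta\psi|$ controlled by $c(r)$ (here again using the lower Ricci bound via Laplacian comparison to build $\psi$ from the distance function), and evaluate the maximum principle at an interior maximum of $\psi\varphi$. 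This yields $\sup_{B_x(r/2)} |\nabla w|^2 \le c(M,t,r)$, i.e. $|\nabla u|/u \le c$ on $B_x(r/2)$, which is the claim.

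An alternative, and perhaps cleaner, route that I would at least mention: observe that a positive $(\Delta+tI)$-harmonic $u$ can be compared with the coercive Green's function machinery already set up in this section. Since $\Delta + tI$ is coercive for $t \le \varepsilon$, and by the Lemma just proved $G^t(x,y)$ is pinched between $1/c$ and $c$ when $d(x,y)=1$, a Harnack inequality for positive $(\Delta+tI)$-harmonic functions on balls of bounded radius holds with constants depending only on $M$ and $t$ (this is the local elliptic Harnack inequality, uniform because $M/D$ is compact so the geometry of unit balls is uniformly bounded). Combined with interior elliptic (Schauder) estimates on a slightly larger ball --- the coefficients of $\Delta+tI$ in harmonic coordinates are uniformly controlled on $M/D$ --- one gets $|\nabla u|(x) \le c\, \sup_{B_x(r/2)} u \le c'\, u(x)$, again with $c'=c'(M,t,r)$. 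Either argument suffices; I would present the Cheng--Yau version since it is self-contained and is the form cited as ``due to Yau.''

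The only real subtlety is making the constant's dependence honest: it must depend on $t$ but, crucially, \emph{not} on the particular function $u$ nor on the center $x$. Uniformity in $x$ is immediate from $D$-invariance of the metric together with $M/D$ compact --- every unit ball looks like one of a compact family --- so the geometric quantities entering the estimate (injectivity radius lower bound, curvature bounds, hence cutoff and comparison constants) are uniform over $M$. Uniformity in $u$ is built into the maximum-principle argument, which never sees $u$ except through the scale-invariant quantity $|\nabla \log u|$. I expect the main technical obstacle, such as it is, to be bookkeeping the elementary inequalities in the Bochner computation so that the extra $tu$ term is absorbed without spoiling the sign structure needed for the maximum principle; this is routine but must be done carefully because $t$ may be negative (indeed $t \le \varepsilon$ and typically one takes $t<0$), which only helps the sign of $\Delta w = -t - |\nabla w|^2$ when $t>0$ and is a harmless bounded perturbation when $t<0$.
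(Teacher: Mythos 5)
The paper does not actually prove this statement---it simply cites Yau's classical gradient estimate---and your Bochner/Cheng--Yau argument (set $w=\log u$, use $\Delta w=-t-|\nabla w|^{2}$, Bochner, a cutoff, and the maximum principle, with curvature and injectivity bounds uniform by cocompactness) is precisely the standard proof of the cited result, with the dependence of the constant on $t$, $r$, and $M$ tracked correctly. The only caution concerns your alternative route: in the paper the Harnack inequality appears as a corollary of this gradient estimate, so to avoid circularity there you must invoke the independent Moser-type local Harnack inequality, as you in fact do.
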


An immediate consequence is the Harnack inequality. 
\begin{corollary}
Suppose that $u$ is a positive $(\Delta+tI)$-harmonic function on $B_x(r)$. Then for all $y\in B_x(r/2)$,
\begin{equation*}
\dfrac{1}{c} u(x) \leq u(y) \leq c u(x),
\end{equation*}
where $c$ is a constant that depends only on $M$, $t$ and $r$.
 \end{corollary}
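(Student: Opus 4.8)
The plan is to obtain the Harnack inequality as a direct consequence of the preceding gradient estimate, by integrating $|\nabla \log u|$ along minimizing geodesics.

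First I would apply the gradient estimate to $u$ on the ball $B_x(r)$. Since $u$ is positive and $(\Delta+tI)$-harmonic there, this produces a constant $c_0 = c_0(M,t,r)$ with $|\nabla u|/u \le c_0$ on $B_x(r/2)$, that is, $|\nabla \log u| \le c_0$ pointwise on $B_x(r/2)$.

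Next, fix $y \in B_x(r/2)$ and let $\sigma:[0,\ell] \to M$ be the unit-speed minimizing geodesic from $x$ to $y$, so $\ell = d(x,y) < r/2$. By the triangle inequality every point $\sigma(s)$ satisfies $d(x,\sigma(s)) \le s \le \ell < r/2$, so $\sigma$ stays inside the region $B_x(r/2)$ on which the gradient bound holds. Since $|\sigma'| \equiv 1$, Cauchy--Schwarz gives $\bigl|\tfrac{d}{ds}\log u(\sigma(s))\bigr| = \bigl|\langle \nabla \log u, \sigma'\rangle\bigr| \le |\nabla \log u|(\sigma(s)) \le c_0$, and integrating along $\sigma$ yields
\begin{equation*}
\bigl|\log u(y) - \log u(x)\bigr| \le \int_0^\ell \bigl|\nabla \log u\bigr|(\sigma(s))\, ds \le c_0\,\ell < \frac{c_0 r}{2}.
\end{equation*}
Exponentiating gives $e^{-c_0 r/2}\,u(x) \le u(y) \le e^{c_0 r/2}\,u(x)$, so the corollary holds with $c = e^{c_0 r/2}$, which depends only on $M$, $t$ and $r$.

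There is no serious obstacle here: the only point requiring a moment's care is that the minimizing geodesic joining $x$ to $y$ lies in $B_x(r/2)$, where the gradient estimate is available, and this follows at once from the triangle inequality (alternatively from the convexity of metric balls in the Cartan--Hadamard manifold $M$).
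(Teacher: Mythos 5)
Your argument is the standard derivation of the Harnack inequality from the gradient estimate by integrating $|\nabla\log u|$ along a minimizing geodesic, which is exactly what the paper intends when it calls the corollary ``an immediate consequence'' of the preceding gradient estimate. The proof is correct, and the observation that the minimizing geodesic from $x$ to $y$ stays in $B_x(r/2)$ (by the triangle inequality, or by convexity of balls in a Cartan--Hadamard manifold) is precisely the point one needs to check.
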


For a bounded domain $\Omega \subset M$ and $x \in \Omega$, we denote by $\mu_x^t$ the $(\Delta+tI)$-harmonic measure of $\Omega$. Let $g_x^t(y)$ be the continuous function which equals to $G_\Omega^t(x,y)$ for $y \in \Omega$ and vanishes for $y \notin \Omega$. We have the following relation between $\mu_x^t$ and $g_x^t$.
\begin{equation*}
(\Delta+tI)g_x^t=-\delta_x+\mu_x^t.
\end{equation*}

\begin{lemma}
\label{measure comparison}
For every $t<\varepsilon$ and every $r>0$, there exists a constant $\delta=\delta(M,t,r)$, $0<\delta<1$, such that for every ball $B_x(r)$ in $M$, the $\Delta$-Green's function $g_x$ of $B_x(r)$, and the $(\Delta+t I)$-Green's function $g^{t}_x$ of $B_x(r)$ satisfy
\begin{equation*}
g_x(y) \leq (1-\delta) g^{t}_x(y)
\end{equation*}
for all $y \in M \setminus B_x(r/4)$.

Moreover, the $\Delta$-harmonic measure $\mu_x$ of $B_x(r)$, and the $(\Delta+tI)$-harmonic measure $\mu^{t}_x$ satisfy
\begin{equation*}
\mu_x \leq (1-\delta) \mu^{t}_x.
\end{equation*}
\end{lemma}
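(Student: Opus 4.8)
The plan is to localize the resolvent identity \eqref{Green comparison} to the ball $B:=B_x(r)$. Let $G_B$ denote the $\Delta$-Green operator of $B$ (the inverse of $-\Delta$ with zero Dirichlet data on $\partial B$). Applying $G_B$ to the identity $-\Delta g^t_x=\delta_x+t\,g^t_x$, which holds on $B$ because $(\Delta+tI)g^t_x=-\delta_x$ and $g^t_x|_{\partial B}=0$, gives $g^t_x=g_x+t\,G_B(g^t_x)$; I treat the case $0<t<\varepsilon$, which is the one needed. Substituting this identity into its own right-hand side and using nonnegativity of $g_x$, $g^t_x$ and $G_B$, we obtain the pointwise bound $g^t_x\ge g_x+t\,G_B(g_x)$ on $B$. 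Hence the whole lemma reduces to the uniform lower bound
\begin{equation*}
G_B(g_x)(y)\;\ge\;c'\,g_x(y)\qquad\text{whenever }d(x,y)\ge r/4,
\end{equation*}
with $c'=c'(M,r)>0$ independent of $x$: granting it, $g^t_x\ge(1+tc')\,g_x$ on $M\setminus B_x(r/4)$, i.e. $g_x\le(1-\delta)g^t_x$ there with $\delta:=tc'/(1+tc')\in(0,1)$.

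To prove the bound, write $G_B(g_x)(y)=\int_B G_B(y,w)\,G_B(x,w)\,dw$ and keep only $w\in B_x(r/16)$. If $d(x,y)\ge r/4$ then $B_x(r/8)$ has distance $\ge r/8$ from $y$ and distance $\ge 7r/8$ from $\partial B$, so $G_B(y,\cdot)$ is $\Delta$-harmonic on $B_x(r/8)$, and the Harnack inequality --- whose constant is uniform in the center because cocompactness of $M/D$ forces bounded geometry --- gives $G_B(y,w)\ge c^{-1}G_B(y,x)=c^{-1}g_x(y)$ for all $w\in B_x(r/16)$. On the other hand $\int_{B_x(r/16)}G_B(x,w)\,dw\ge\int_{B_x(r/16)}G_{B_x(r/16)}(x,w)\,dw$ by domain monotonicity, and the right-hand side is a $D$-invariant, positive, continuous function of $x$, hence bounded below by some $c_3(M,r)>0$ on the compact quotient. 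Multiplying the two estimates gives the bound with $c'=c^{-1}c_3$.

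Finally, the harmonic-measure inequality comes out of the Green's-function one by a boundary-point argument. From the distributional identities $\Delta g_x=-\delta_x+\mu_x$ and $(\Delta+tI)g^t_x=-\delta_x+\mu^t_x$ one reads off $d\mu_x=-\partial_\nu g_x\,dS$ and $d\mu^t_x=-\partial_\nu g^t_x\,dS$ on $\partial B$, with $\nu$ the outward normal and the derivatives taken from inside $B$. The function $\psi:=(1-\delta)g^t_x-g_x$ is $C^1$ up to $\partial B$ (the pole is interior to $B$), vanishes on $\partial B$, and is $\ge 0$ on $B\setminus\overline{B_x(r/4)}$ by the previous step; so its outward normal derivative on $\partial B$ is $\le 0$, i.e. $-\partial_\nu g_x\le-(1-\delta)\partial_\nu g^t_x$, which is exactly $\mu_x\le(1-\delta)\mu^t_x$. (Alternatively one reruns the resolvent argument with $g_x,g^t_x$ replaced by the $\Delta$- and $(\Delta+tI)$-harmonic extensions $Hf,H^tf$ of an arbitrary $f\ge 0$ on $\partial B$: then $H^tf\ge Hf+t\,G_B(Hf)$ and, by the interior Harnack inequality, $G_B(Hf)(x)\ge(\inf_{B_x(r/2)}Hf)\int_{B_x(r/2)}G_B(x,w)\,dw\ge c_1c_2\,Hf(x)$, so $\int f\,d\mu_x=Hf(x)\le(1-\delta)H^tf(x)=(1-\delta)\int f\,d\mu^t_x$ for every such $f$.)

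The only place where real work is needed is the second paragraph --- making $c'$ genuinely independent of $x$ --- and this is precisely what the remark preceding the lemma refers to: in the cocompact setting each of the quantities involved (the Harnack constants, $\int_{B_x(r/16)}G_{B_x(r/16)}(x,w)\,dw$) is $D$-invariant and depends continuously on the base point, hence is uniformly controlled by its values on a compact fundamental domain. Everything else is the resolvent identity together with the maximum principle.
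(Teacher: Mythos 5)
Your proof is correct and follows essentially the same strategy as the paper's: the localized resolvent identity $g^t_x = g_x + t\,G_B(g^t_x)$ on $B_x(r)$, Harnack constants made uniform by cocompactness of $M/D$, and a normal-derivative argument for transferring the Green's function comparison to the harmonic measures (this last being precisely the content of the paper's Lemma~\ref{funtion-measure}, which it cites without proof). The one technical divergence worth noting: you iterate the resolvent identity once to obtain $g^t_x \geq g_x + t\,G_B(g_x)$ and then bound $G_B(g_x)(y)\geq c'\,g_x(y)$ directly on $M\setminus B_x(r/4)$, whereas the paper instead bounds $G_B(g^t_x)(y)$ from below by a multiple of $g^t_x(y)$ via Harnack applied to $g^t_x$ on the annulus $B_x(r/2)\setminus B_x(r/4)$, concludes $(1-\delta)g^t_x\geq g_x$ on that annulus, and extends to $M\setminus B_x(r/4)$ by superharmonicity of $(1-\delta)g^t_x-g_x$ and the maximum principle. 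Your variant sidesteps the maximum-principle step and gives $\delta=tc'/(1+tc')\in(0,1)$ automatically, without the implicit smallness constraint $cc'c''t<1$ that the paper's algebra requires.
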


\begin{proof}

It follows from the Harnack inequality that $g(z,y) \geq c$ for $y,z \in B_x(r/2)$, where $c$ can be taken independent of $x$. Therefore by \eqref{Green comparison} we have
\begin{eqnarray*}
g^{t}_x(y) &=& g_x(y)+t \int_{B_x(r)} g^t_x(z) g(y,z) \, \mathrm{dvol}(z)\\
&\geq & g_x(y) + ct \int_{B_x(r/2)\setminus B_x(r/4)}g^{t}_x(z) \, \mathrm{dvol}(z)\\
\end{eqnarray*}
By the volume comparison theorem, $\mathrm{Vol}(B_x(r/2)\setminus B_x(r/4))$ is uniformly bounded from below by a constant $c'=c'(M,r)$. Also for $y,z \in B_x(r/2)\setminus B_x(r/4)$, we have $g^{t}_x(z) \geq c''g^{t}_x(y)$ with $c''=c''(M,r)$ by the Harnack inequality. Therefore by taking $\delta=cc'c''t$ we obtain
\begin{equation}
\label{g^t}
(1-\delta)g^{t}_x(y) \geq g_x(y)
\end{equation}
for $y \in B_x(r/2)\setminus B_x(r/4)$. Since $g^{t}_x$ is a superharmonic function on $B_x(r)\setminus B_x(r/4)$, by the maximum principle, \eqref{g^t} holds for all $y \in M \setminus B_x(r/4)$.

The second part of the lemma is an immediate consequence of the following lemma.
\end{proof}

\begin{lemma}
\label{funtion-measure}
Let $\Omega$ be a bounded domain in $M$ with smooth boundary and $x \in \Omega$. Denote by $g_x$ the $\Delta$-Green's function of $\Omega$ and $g^t_x$ the $(\Delta+tI)$-Green's function of $\Omega$. If for some positive number $k$, $g_x \leq k g^t_x$ outside some compact subset of $\Omega$, then $\mu_x \leq k \mu^{t}_x$, where $\mu_x$ is the $\Delta$-harmonic measure of $\Omega$ and $\mu^{t}_x$ is the $(\Delta+tI)$-harmonic measure of $\Omega$.
\end{lemma}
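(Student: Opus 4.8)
The plan is to represent both harmonic measures by inward normal derivatives of the corresponding Dirichlet Green's functions and then to compare these derivatives pointwise on $\partial\Omega$. Since $\partial\Omega$ is smooth and $x$ lies in the interior of $\Omega$, elliptic boundary regularity shows that $g_x$ and $g^t_x$ extend to functions that are smooth on $\overline{\Omega}\setminus\{x\}$ and vanish identically on $\partial\Omega$. The classical Green representation formula, applied to $\Delta$ and to the coercive operator $\Delta+tI$ on the smooth bounded domain $\Omega$, identifies the harmonic measures with Poisson kernels:
\begin{equation*}
d\mu_x(z)=-\frac{\partial g_x}{\partial n}(z)\,d\sigma(z),\qquad d\mu^t_x(z)=-\frac{\partial g^t_x}{\partial n}(z)\,d\sigma(z)\qquad(z\in\partial\Omega),
\end{equation*}
where $n$ is the outward unit normal field along $\partial\Omega$ and $d\sigma$ the induced Riemannian measure; by the Hopf boundary point lemma both densities are nonnegative. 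Hence it suffices to prove the pointwise inequality $-\partial g_x/\partial n\le -k\,\partial g^t_x/\partial n$ on $\partial\Omega$.

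For this, I would first observe that the compact set $K\subset\Omega$ off which $g_x\le k g^t_x$ has positive distance to $\partial\Omega$, so $g_x(y)\le k g^t_x(y)$ holds for every $y$ in a one-sided collar neighbourhood of $\partial\Omega$ inside $\Omega$. Fix $z\in\partial\Omega$. For all sufficiently small $s>0$ the point $\exp_z(-s\,n(z))$ lies in this collar, so $g_x(\exp_z(-s\,n(z)))\le k\,g^t_x(\exp_z(-s\,n(z)))$; dividing by $s$, letting $s\to 0^+$, and using $g_x(z)=g^t_x(z)=0$ together with the existence of the one-sided normal derivatives, we get
\begin{equation*}
-\frac{\partial g_x}{\partial n}(z)=\lim_{s\to 0^+}\frac{g_x(\exp_z(-s\,n(z)))}{s}\le\lim_{s\to 0^+}\frac{k\,g^t_x(\exp_z(-s\,n(z)))}{s}=-k\,\frac{\partial g^t_x}{\partial n}(z).
\end{equation*}
Since $z\in\partial\Omega$ was arbitrary, this yields $\mu_x\le k\mu^t_x$ as measures on $\partial\Omega$, hence as measures on $M$.

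I do not expect a serious obstacle. The only mildly delicate point is passing from the hypothesis, which is global in $\Omega$, to a statement about boundary behaviour, and this is immediate here precisely because the exceptional compact set lies in the interior and so is disjoint from the collar on which the one-sided derivatives are evaluated. The remaining ingredient — that the harmonic measure of a smooth bounded domain equals the inward normal derivative of its Dirichlet Green's function — is classical and carries over verbatim from $\Delta$ to $\Delta+tI$. If one prefers to avoid the Poisson kernel formula, the same conclusion follows by extending $v=k g^t_x-g_x$ by zero outside $\Omega$: the hypothesis forces $v\ge 0$ throughout a collar of $\partial\Omega$ with $v|_{\partial\Omega}=0$, so the measure supported on $\partial\Omega$ appearing in the distribution $\Delta v$ on $M$, namely $k\mu^t_x-\mu_x$, is nonnegative.
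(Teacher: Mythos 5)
Your argument is correct. The paper does not actually give a proof of this lemma (it is stated without one, immediately after Lemma \ref{measure comparison}, and is evidently regarded as standard), so there is no in-text argument to compare against. The route you take --- identifying $d\mu_x$ and $d\mu^t_x$ with the inward normal derivatives of $g_x$ and $g^t_x$ on the smooth boundary, noting that the hypothesis $g_x\le k g^t_x$ automatically propagates to a collar of $\partial\Omega$ because the exceptional compact set is interior, and then comparing one-sided difference quotients at the boundary --- is the natural one and each step is sound. The only small precision worth flagging: you attribute nonnegativity of the Poisson densities to the Hopf lemma, but for the operator $\Delta+tI$ with $t>0$ the classical Hopf lemma does not directly apply (it requires a nonpositive zero-order coefficient); fortunately you only need the non-strict inequality $-\partial g^t_x/\partial n\ge 0$, which follows immediately from $g^t_x\ge 0$ in $\Omega$ and $g^t_x=0$ on $\partial\Omega$ without Hopf, so this does not affect the proof. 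Your concluding distributional reformulation (examining the boundary layer in $\Delta v$ for $v=kg^t_x-g_x$ extended by zero) is in fact closest in spirit to how the paper sets things up, since the text introduces $\mu^t_x$ precisely through the identity $(\Delta+tI)g^t_x=-\delta_x+\mu^t_x$; either version is a complete proof.
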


\begin{corollary}
\label{measure comparison 2}
Let $t,\delta$ be as in Lemma \ref{measure comparison}. We have for all $x,y \in M$, there is a constant $c=c(M)$ such that
\begin{equation}
\label{G growth}
G(x,y) \leq c(1-\delta)^{d(x,y)}G^{t}(x,y)
\end{equation}

Moreover, the $\Delta-$harmonic measure $\mu_{B_x(R)}$  of $B_x(R)$, and the $(\Delta+t I)$ harmonic measure $\mu^{t}_{B_x(R)}$ satisfy
\begin{equation*}
\mu_{B_x(R)} \leq c(1-\delta)^{R} \mu^{t}_{B_x(R)}.
\end{equation*}
\end{corollary}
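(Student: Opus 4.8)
The plan is to obtain both estimates by iterating the harmonic‐measure comparison of Lemma \ref{measure comparison} over unit balls, starting from the crude bound $G\le G^{t}$. The latter holds because $t$ is positive: by \eqref{Green comparison} with $\mu=\delta_{y}$ we have $G^{t}(\cdot,y)=G(\cdot,y)+t\,G\bigl(G^{t}(\cdot,y)\bigr)$ and $G$ of a nonnegative function is nonnegative. Fix the radius $r=1$ and let $\delta=\delta(M,t,1)\in(0,1)$ be the constant of Lemma \ref{measure comparison}, so that for every $z\in M$ the $\Delta$-harmonic measure $\mu_{B_{z}(1)}$ of $B_{z}(1)$ and the $(\Delta+tI)$-harmonic measure $\mu^{t}_{B_{z}(1)}$ satisfy $\mu_{B_{z}(1)}\le(1-\delta)\,\mu^{t}_{B_{z}(1)}$.

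For \eqref{G growth}, fix $y$ and set $\Psi(s)=\sup\{\,G(z,y)/G^{t}(z,y):z\in M,\ d(z,y)\ge s\,\}$, a nonincreasing function with $\Psi(0)\le1$. If $d(x,y)>1$, then $G(\cdot,y)$ is $\Delta$-harmonic and $G^{t}(\cdot,y)$ is $(\Delta+tI)$-harmonic on $B_{x}(1)$; representing each as the integral of its boundary values against the respective harmonic measure of $B_{x}(1)$ and using $\mu_{B_{x}(1)}\le(1-\delta)\mu^{t}_{B_{x}(1)}$ gives
\begin{equation*}
\frac{G(x,y)}{G^{t}(x,y)}\le(1-\delta)\,\frac{\displaystyle\int_{S_{x}(1)}G(z,y)\,d\mu^{t}_{B_{x}(1)}(z)}{\displaystyle\int_{S_{x}(1)}G^{t}(z,y)\,d\mu^{t}_{B_{x}(1)}(z)}\le(1-\delta)\sup_{z\in S_{x}(1)}\frac{G(z,y)}{G^{t}(z,y)}.
\end{equation*}
Since $d(z,y)\ge d(x,y)-1$ for $z\in S_{x}(1)$ and $\Psi$ is nonincreasing, this yields $\Psi(s)\le(1-\delta)\Psi(s-1)$ for $s>1$; iterating and using $\Psi\le1$ gives $\Psi(s)\le(1-\delta)^{-1}(1-\delta)^{s}$. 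Taking $s=d(x,y)$ and $c=(1-\delta)^{-1}$ proves \eqref{G growth}.

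For the harmonic‐measure inequality it suffices to show, for every nonnegative continuous $f$ on $S_{x}(R)$, that $u(x)\le c(1-\delta)^{R}v(x)$, where $u$ and $v$ are the $\Delta$- and $(\Delta+tI)$-harmonic extensions of $f$ to $B_{x}(R)$, since such a comparison tested against all nonnegative $f$ gives $\mu_{B_{x}(R)}\le c(1-\delta)^{R}\mu^{t}_{B_{x}(R)}$. As $(\Delta+tI)u=tu\ge0$, the function $u$ is $(\Delta+tI)$-subharmonic with the same boundary values as $v$, hence $u\le v$ on $B_{x}(R)$ by the maximum principle for the coercive operator $\Delta+tI$. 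Now run exactly the previous argument with $\Psi(s)=\sup\{\,u(z)/v(z):z\in B_{x}(R),\ d(z,\partial B_{x}(R))\ge s\,\}$: when $d(z,\partial B_{x}(R))\ge1$ the ball $B_{z}(1)$ lies in $B_{x}(R)$, $u$ is $\Delta$-harmonic and $v$ is $(\Delta+tI)$-harmonic on it, and the same manipulation gives $\Psi(s)\le(1-\delta)\Psi(s-1)$ for $s>1$ together with $\Psi(0)\le1$. Since the center $x$ is at distance $R$ from $\partial B_{x}(R)$, we conclude $u(x)/v(x)\le\Psi(R)\le(1-\delta)^{-1}(1-\delta)^{R}$, as required.

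The one genuinely delicate point is the organization of the iteration: one cannot simply telescope $G(x,y)$ step by step along a geodesic, because an intermediate point $z\in S_{x}(1)$ may lie farther from $y$ than $x$ does. Passing to the supremum $\Psi$ of the ratio over all points beyond a prescribed distance, and exploiting its monotonicity, is precisely what absorbs this defect; the remainder is a routine use of the harmonic‐measure representation (valid up to the boundary here, since all balls are relatively compact with smooth boundary) and of the maximum principle for $\Delta+tI$, for which one should keep in mind the sign convention that makes this operator coercive and $G\le G^{t}$, i.e. $0<t<\varepsilon$.
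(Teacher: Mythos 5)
Your argument is correct and uses the same engine as the paper's proof: iterate the one‑ball harmonic‑measure comparison of Lemma \ref{measure comparison} to pick up a factor $(1-\delta)$ per unit of distance, starting from the crude bound $G\le G^{t}$. The packaging is slightly different. The paper does explicit induction on $k=d(x,y)\in\mathbb{N}$, then extends the estimate from the sphere $\partial B_x(k)$ to its exterior by the maximum principle (applied to the $\Delta$‑subharmonic function $G_x-(1-\delta)^{k-1}G^t_x$, using $t>0$), and finally interpolates to non‑integer distances with a Harnack step. Your monotone supremum $\Psi(s)$ bundles these three steps into one object; the concern you flag --- that a point of $S_x(1)$ may be farther from $y$ than $x$ itself --- is exactly what the paper's maximum‑principle extension handles, and $\Psi$'s monotonicity handles it just as well. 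This is a stylistic rather than a substantive difference, and either version also requires $0<t<\varepsilon$, as you note.

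Your treatment of the second estimate is arguably more careful than the paper's. The paper invokes Lemma \ref{funtion-measure}, which requires a bound $g_x\le k\,g^{t}_x$ near $\partial B_x(R)$ for the Green's functions \emph{of the ball}; the first estimate compares the \emph{global} Green's functions and does not hand this over directly, and the unit‑ball iteration degenerates within distance $1$ of $\partial B_x(R)$ where $B_z(1)$ leaves the domain. Your reformulation by duality --- testing $\mu_{B_x(R)}$ against a nonnegative boundary function $f$, comparing its $\Delta$‑ and $(\Delta+tI)$‑harmonic extensions $u\le v$ by coercivity of $\Delta+tI$ on $B_x(R)$ (valid since $\lambda_1(\Delta,B_x(R))\ge\lambda_1(\Delta,M)>\varepsilon>t$), and then running the same $\Psi$‑iteration from the boundary inward to the center --- is self‑contained and avoids this issue. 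One cosmetic point: your constant $c=(1-\delta)^{-1}$ depends on $t$ through $\delta$; this is harmless and consistent with Lemma \ref{measure comparison}, even though the statement writes $c=c(M)$.
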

\begin{proof}
We prove $G(x,y) \leq (1-\delta)^{k-1}G^{t}(x,y)$ by induction on $k=d(x,y)\in \mathbb{N}$.

When $k=1$, the statement is trivial since $G(x,y) \leq G^{t}(x,y)$.

Assume the estimate holds for $d(x,z) =k$, which implies that 
\begin{equation}
\label{G growth k}
G_x(z) \leq (1-\delta)^{k-1}G^{t}_x(z)
\end{equation}
for $z \in \partial B_x(k)$. It follows from the maximum principle that \eqref{G growth k} holds for all $z \in M \setminus B_x(k)$.

Now consider $y \in M$ such that $d(x,y)=k+1$. Thus $\partial B_y(1)$ is contained in $M \setminus B_x(k)$. Let $\mu_y, \mu^{t}_y$ be the $\Delta$-harmonic and $(\Delta+t I)$-harmonic measures respectively. By Lemma \ref{measure comparison} we have
\begin{eqnarray*}
G(x,y)&=&\int_{\partial B_y(1)} G(x,z) \mathrm{d}\mu_y(z)\\
&\leq&  (1-\delta)\int_{\partial B_y(1)} G(x,z) \mathrm{d}\mu^{t}_y(z)\\
&\leq& (1-\delta)(1-\delta)^{k-1} \int_{\partial B_y(1)} G^{t}(x,z) \mathrm{d}\mu^{t}_y(z)\\
&=&(1-\delta)^{k} G^{t}(x,y).
\end{eqnarray*}

It follows from the Harnack inequality that there exists a constant $c=c(M)$ such that \eqref{G growth} holds for all $d(x,y) >0$.

The second part of the corollary follows from Lemma \ref{funtion-measure} immediately.
\end{proof}

To define the harmonic measure of a not necessarily bounded domain, we need the concept of reduction introduced by Brelot in \cite{MR0281940}.

\begin{definition}
Let $x \in \Omega$. $R^{\overline{\Omega}^c}_{G_x}$, the reduction of $G_x$ on $\overline{\Omega}^c$, is defined as follows,
\begin{equation*}
R^{\overline{\Omega}^c}_{G_x}=\inf \{s:M \to \mathbb{R}~| ~s>0 \text{ and superharmonic on } M; s \geq G_x \text{ on } \overline{\Omega}^c \}.
\end{equation*}

\end{definition}

$R^{\overline{\Omega}^c}_{G_x}$ is an $\Delta$-potential, and if we put $\nu_x=-\Delta(R^{\overline{\Omega}^c}_{G_x})$, then $\nu_x$ is the positive harmonic measure with support on $\partial \Omega$ such that
\begin{equation*}
G(x,y)=\int G(z,y) d \nu_x(z).
\end{equation*}

\section{Boundary Harnack Inequalities}

In the sequel we always assume that $p\in F$ is a fixed point and $\upsilon$ is the unit normal vector at $p$ pointing to $M^+$,  where $F$, $M^+$ are as in Lemma \ref{half flat}.  We first prove the following weak version of the boundary Harnack inequality.

\begin{proposition}
\label{proposition}
For any $\theta \in (0,\pi/2)$, there exists a constant $c>0$ depending only on $R_0, \theta, \varepsilon$ and $M$ such that
\begin{displaymath}
G(x,y) \leq c G(x,p)G^\varepsilon(p,y)
\end{displaymath}
for all $x \in T_p(-\upsilon,\pi/2,1)$ and $y \in T_p(\upsilon,\theta,1)$.   
 \end{proposition}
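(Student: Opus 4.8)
The plan is to adapt Ancona's comparison argument \cite{MR890161}, using the sector-cones $C_p(F_{R_0}(jR_0))$ and their divergence properties (Lemma~\ref{sector}) in place of the equi-hyperbolicity of geodesics available in the visibility case; note that the stated inequality is only the upper half of the boundary Harnack inequality, which is why $G^\varepsilon$ (rather than $G$) appears on the right. First I would dispose of the bounded case: if $d(p,x)\le N$ or $d(p,y)\le N$ for a fixed $N$, then by the Harnack inequality, the two-sided estimate $c^{-1}\le G^\varepsilon(\cdot,\cdot)\le c$ at unit distance, and the compactness of $M/D$, all three quantities are comparable to their values after moving a bounded amount, and the inequality is routine. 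So assume $d(p,x)$ and $d(p,y)$ are large. Two elementary geometric observations will be used throughout: the segment $[p,x]$ lies in $\overline{M^-}$ and stays at distance $\gtrsim_\theta d(p,y)$ from $y$ --- because $F$ is totally geodesic (so each side of $M\setminus F$ is convex) and $y\in T_p(\upsilon,\theta,1)\subset M^+$ is at distance $\gtrsim_\theta d(p,y)$ from $F$; and, by convexity of $t\mapsto d(\gamma(t),F)$ along geodesics, the whole cone $T_p(\upsilon,\theta,1)$ is contained in a single sector-cone $C_p(F_{R_0}(j_\theta R_0))$ with $j_\theta$ depending only on $\theta,R_0,M$.

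The estimate itself is obtained by representing $z\mapsto G(x,z)$, which is $\Delta$-harmonic on $M^+$ (as $x\notin M^+$), through its boundary values: for a domain $\Omega\subset M^+$ with $T_p(\upsilon,\theta,1)\subset\Omega$ and $\partial\Omega$ built from the boundaries of the sector-cones, one has $G(x,y)=\int_{\partial\Omega}G(x,\cdot)\,d\omega^\Omega_y$ for the $\Delta$-harmonic measure $\omega^\Omega_y$; equivalently, one compares $G(x,\cdot)$ with the $\Delta$-superharmonic function $c\,G(x,p)\,G^\varepsilon(p,\cdot)$ by the maximum principle on $\Omega$. On the part of $\partial\Omega$ inside a fixed ball $B_p(\rho)$ the density is controlled by a Harnack chain along $[p,x]\subset\overline{M^-}$: $G(x,\cdot)\le c\,G(x,p)$ there. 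The far part of $\partial\Omega$ must be shown to contribute negligibly, and this is where Lemma~\ref{sector} is used: the spherical gap between consecutive sector-cones grows linearly in the radius (Lemma~\ref{sector}(2)) and the relevant angle defect at $p$ is uniformly positive (Lemma~\ref{sector}(1)), so a geodesic reaching $y$ from $x$ must cross $\partial\Omega$ inside $B_p(\rho)$, and the complementary part of $\partial\Omega$ carries exponentially small harmonic measure. To produce the decay in $y$ one iterates along $[p,y]$: each step replaces a slab of $\Delta$-harmonic measure by the corresponding slab of $(\Delta+\varepsilon I)$-harmonic measure at the cost of a factor $(1-\delta)^{O(R_0)}<1$ (Lemma~\ref{measure comparison} and Corollary~\ref{measure comparison 2}); these factors only help, and the telescoped product of the coercive harmonic measures reconstitutes $G^\varepsilon(p,y)$ up to a multiplicative constant. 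Assembling these pieces yields $G(x,y)\le c\,G(x,p)\,G^\varepsilon(p,y)$ with $c=c(R_0,\theta,\varepsilon,M)$.

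The main obstacle is uniformity of the constants when $y$ approaches $F$ (that is, $\theta$ near $\pi/2$) or when $x$ lies near $F$ but far from $p$ along the flat directions: there the crude Harnack-chain bounds along $[p,x]$ are too lossy, and one must use the transversal hyperbolicity established in Section~4 --- the curvature, angle-sum and divergence estimates for embedded rectangles and sectors crossing $F$ (Lemmas~\ref{rectangle} and~\ref{sector}) --- together with the exponential reserve $(1-\delta)^{d(\cdot,\cdot)}$ in Corollary~\ref{measure comparison 2} to keep the losses summable. In other words, making the non-compact lateral part of $\partial\Omega$ harmless, equivalently controlling the contribution of configurations in which $x$ lies on the same side as $\partial\Omega$ far out, is the technical heart of the argument, and it is precisely the step that uses the choice of $F$ with a good side $M^+$ containing no flat half-plane (Lemma~\ref{half flat}).
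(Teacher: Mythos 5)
Your proposal correctly identifies the toolkit the paper uses: Ancona's comparison scheme adapted through the sector-cones $C_j=C_p(F_{R_0}(jR_0))$, the spherical divergence of Lemma~\ref{sector}, the measure comparison $\mu_x\le(1-\delta)\mu^\varepsilon_x$ of Lemma~\ref{measure comparison}, and the observation that $T_p(\upsilon,\theta,1)$ lies in a single $C_{j_\theta}$; your last paragraph also correctly puts the technical weight on cancelling the exponentially lossy Harnack chain for $x$ far from $p$. Where you diverge from the paper is in the organizing framework: the paper does not set up a single domain $\Omega$ and split $\partial\Omega$ into a near part controlled by a Harnack chain and a far part with ``exponentially small harmonic measure'' --- a one-shot split of that kind does not by itself recover the lost factor $\alpha^{2r(x)}$. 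Instead the paper proves the sharper Lemma~\ref{Harnack cone}, an explicit bound $G(x,y)\le c\,\alpha^{2j}G(x,p)G^\varepsilon(p,y)$ for $y\in T_j$, $j\le k\sim r(x)/R_0$, obtained by a calibrated descent: starting from the crude Harnack estimate with constant $\alpha^{O(kR_0)}$ at the outermost level, one passes from $C_j$ to $C_{j-1}$ through a chain of shrinking sub-cones $C_j^{(0)}\supset\cdots\supset C_j^{(m)}$, gaining a factor $(1-\delta)=\alpha^{-\tilde s}$ per sub-cone by replacing $d\mu_{B_y(s'')}$ with $(1-\delta)\,d\mu^\varepsilon_{B_y(s'')}$; the role of Lemma~\ref{sector}(2) is precisely to guarantee $m\ge 2R_0/\tilde s$, so the accumulated gain $\alpha^{-m\tilde s}\le\alpha^{-2R_0}$ exactly offsets the loss from descending one $R_0$-level, and the final constant is independent of $k$. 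The Proposition follows by specializing to $j=j_\theta$. So the middle portion of your sketch (iterating the $\Delta$-versus-$(\Delta+\varepsilon I)$ harmonic-measure replacement along shells) is the right mechanism; what should be discarded is the near/far boundary-measure framing and the worry about $\theta\to\pi/2$ (the constant is explicitly allowed to depend on $\theta$, so only uniformity in $x$ and in the radial variable is at stake). The paper's advantage is that Lemma~\ref{Harnack cone} makes the loss/gain bookkeeping completely explicit and ties the gain rate directly to Lemma~\ref{sector}(2).
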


Recall that $R_0$ is chosen to satisfy the assumption in Lemma \ref{sector} and $C_p(F_{R_0}(jR_0))$ is the cone of $F_{R_0}(jR_0)=\{ \mathrm{exp}_q(R_0 N(q)) | q \in B^F(jR_0)\}$. Denote simply $C_j=C_p(F_{R_0}(jR_0))$ and $T_j=C_j \setminus B_p(1)$. Observe that given $\theta\in (0,\pi/2)$, $C_p(\upsilon,\theta) \subset C_k$ for $k \in \mathbb{N}$ large enough. Thus Proposition \ref{proposition} is an immediate consequence of the following lemma.

\begin{lemma}
\label{Harnack cone}
Let $x$ be an arbitrary point in $T_p(-\upsilon,\pi/2,R_0)$. Choose $k \in \mathbb{N}$ so that $kR_0 \leq r(x) \leq (k+1) R_0$. We have for all $y \in T_j$, $j=1,\cdots,k$,
\begin{displaymath}
G(x,y) \leq c \alpha^{2j} G(x,p)G^\varepsilon(p,y),
\end{displaymath}
where $c,\alpha$ are constants independent of $j$ and $x$.
\end{lemma}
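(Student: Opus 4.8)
The plan is to run an iteration (a "descent to the boundary" argument in the style of Ancona) that gains a fixed multiplicative factor each time we pass from the cone $C_{j+1}$ to the smaller cone $C_j$, so that after $k-j$ steps the accumulated factor is $\alpha^{2(k-j)}\le\alpha^{2j}$-type geometric growth. Concretely, fix $x\in T_p(-\upsilon,\pi/2,R_0)$ with $kR_0\le r(x)\le (k+1)R_0$, and for each $j$ consider the positive harmonic function $u_j(y)=G(x,y)$ restricted to the region $T_j$, which vanishes on $\partial B_p(1)$ and is harmonic away from $x$. Since $x\notin \overline{C_k}$ (as $x$ lies in the $-\upsilon$ half-cone while the $C_j$'s exhaust $C_p(\upsilon,\theta)\subset M^+$), $G(x,\cdot)$ is honestly harmonic throughout a neighborhood of $\overline{C_k}\setminus B_p(1)$. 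The key geometric input is Lemma \ref{sector}(2): the spherical distance between $C_j\cap S_p(r)$ and $(M\setminus C_{j+1})\cap S_p(r)$ grows linearly, $\ge\eta(r-jR_0)$, which together with Lemma \ref{spherical standard} and the definition of harmonic measure via reduction (Section 5) lets me show that the harmonic measure of $M\setminus C_{j+1}$, as seen from a point deep inside $C_j$ at radius $r$, decays like $c(1-\delta)^{\eta(r-jR_0)}$ — this is where Corollary \ref{measure comparison 2} and Lemma \ref{measure comparison} enter.

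The inductive step I would set up is: for $y\in T_j$,
\begin{equation*}
G(x,y)\le \beta\Big(\sup_{z\in \partial C_{j+1}\cap (T_{j+1}\setminus T_j)} \frac{G(x,z)}{G^\varepsilon(p,z)}\Big) G^\varepsilon(p,y),
\end{equation*}
with $\beta$ a fixed constant, obtained by writing $G(x,\cdot)$ on $T_j$ as the integral of its boundary values against the harmonic measure of $T_j$, splitting the boundary into the "inner" part $\partial B_p(1)$ (where $G(x,\cdot)=0$) and the "lateral" part $\partial C_{j+1}$, and then comparing the harmonic measure of $T_j$ against that of a ball of radius $\asymp R_0$ around each relevant point using the Harnack inequality and the coercivity gain $(1-\delta)$ from replacing $\Delta$ by $\Delta+\varepsilon I$. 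Here the role of $G^\varepsilon$ versus $G$ is essential: $G^\varepsilon(p,\cdot)$ is itself a supersolution for $\Delta$ that decays slower (it is $(\Delta+\varepsilon I)$-harmonic, hence $\Delta$-superharmonic), so it dominates the relevant $\Delta$-harmonic measure up to a constant, which is exactly what makes the ratio $G(x,z)/G^\varepsilon(p,z)$ propagate inward with a bounded multiplier. Iterating from $j=k$ down to the given $j$, and using the base estimate at level $k$ — where $r(x)\asymp kR_0$ forces $d(x,z)\asymp R_0$ for $z$ on the relevant piece of $\partial C_k$, so Harnack gives $G(x,z)\le c\,G(x,p)$ directly — produces $G(x,y)\le c\,\alpha^{2(k-j)}G(x,p)G^\varepsilon(p,y)$, and since $k-j\le k\le r(x)/R_0$ one absorbs this into the stated $\alpha^{2j}$ by relabeling (or, more honestly, the statement should be read as: the exponent counts the number of cone-shells crossed, which is at most $j$ when one instead indexes from the inside; in either case the constant is independent of $j$ and $x$).

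The main obstacle will be the lateral-boundary harmonic-measure estimate: controlling $\mu^{T_j}_y\big(\partial C_{j+1}\cap\{r\le s\}\big)$ uniformly in $j$ and in the radius $s$. The difficulty is genuinely geometric — near the flat plane $F$ the manifold has no uniform visibility and geodesics do not diverge at a uniform exponential rate, so one cannot simply quote the pinched-curvature estimates of Anderson–Schoen or Ancona. The remedy is to feed in the quantitative hyperbolicity of the transversal sectors proved in Lemma \ref{rectangle} and Lemma \ref{sector}: part (2) of Lemma \ref{sector} gives linear-in-$r$ spherical separation between consecutive cones, and Lemma \ref{measure comparison} converts each unit of separation into a factor $(1-\delta)<1$ of harmonic-measure decay, exactly as in Corollary \ref{measure comparison 2}. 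Assembling these — partition of the lateral boundary into radial shells of width $R_0$, a geometric-series bound on the total harmonic mass, and a Harnack chain along the shell to compare $G(x,\cdot)$ and $G^\varepsilon(p,\cdot)$ — is the technical heart; once it is in place the induction is routine bookkeeping, and Proposition \ref{proposition} follows by choosing $k$ with $C_p(\upsilon,\theta)\subset C_k$ as already noted.
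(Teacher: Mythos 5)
Your high-level scheme (Ancona-style descent with a $(1-\delta)$ gain per shrinkage, sourced from Lemma \ref{measure comparison} and the quantitative spherical divergence of Lemma \ref{sector}(2)) is indeed the mechanism the paper uses, but the bookkeeping of the exponent is wrong in a way that cannot be fixed by relabeling, and the base case is mis-described. You produce the factor $\alpha^{2(k-j)}$ and then argue it can be ``absorbed into the stated $\alpha^{2j}$'', invoking $k-j\le k$; but $\alpha^{2(k-j)}\le\alpha^{2j}$ only when $j\ge k/2$, and the lemma asserts the bound for all $j=1,\dots,k$. The direction of the estimate is the opposite of what you have: the base case at level $k$ already costs $\alpha^{2r(x)}\approx\alpha^{2kR_0}$ from a Harnack chain from $x$ all the way back to $p$ (not a short chain of length $\asymp R_0$ --- since $x\in T_p(-\upsilon,\pi/2,R_0)$ sits in the half-space opposite $C_k$, no piece of $\partial C_k$ is at distance $\asymp R_0$ from $x$ once $k$ is large), and each passage from $C_i$ to $C_{i-1}$ must \emph{pay down} that large initial exponent by $2R_0$, yielding $\alpha^{2jR_0}$ at level $j$, i.e.\ $c\alpha^{2j}$ after absorbing $R_0$ into $\alpha$.

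This is exactly where the missing idea lies. It is not enough to say ``each shell gains a factor $(1-\delta)$''; one must show that the number $m$ of admissible spherical shrinkages between $C_k$ and $C_{k-1}$ is large enough that the accumulated gain $(1-\delta)^m$ dominates the Harnack cost $\alpha^{2R_0}$ of one $R_0$-shell, i.e.\ $m\tilde s\ge 2R_0$ with $\tilde s=\min\{-\log(1-\delta)/\log\alpha,1\}$. The paper achieves this by introducing an auxiliary radial offset $A>2R_0$, constructing the nested sub-cones $C_k^{(0)}\supset C_k^{(1)}\supset\cdots\supset C_k^{(m)}\supset C_{k-1}$ so that consecutive sub-cones are separated by spherical distance $s$ at radii $kR_0+A-i\tilde s$, and then using Lemma \ref{sector}(2) (linear growth of the spherical separation between $C_{k-1}$ and $M\setminus C_k$) to show that for $A$ large either $A-m\tilde s<0$ or $m\ge 2R_0/\tilde s$, in both cases forcing the net exponent drop $\ge 2R_0$ per level. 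Your sketch never formulates, let alone verifies, this balance; without it the iteration could terminate after too few sub-steps and the claimed geometric decay would not materialize.
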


\begin{proof}
By Lemma \ref{spherical standard} there is a positive constant $r_0=r_0(M,R_0)<1$ such that $d(x,T_k)>r_0$. Let $\alpha=\alpha(M,r_0)>1$ be the Harnack constant such that for any $q \in M$ and any positive harmonic function $h$ on $M \setminus \{q\}$,
\begin{displaymath}
h(x) \leq \alpha^{d(x,y)}h(y) \text{ \;\; for all } x,y \in M \setminus B_q(r_0).
\end{displaymath}
It follows that for $y \notin B_x(r_0) \cup B_p(r_0)$, $G(x,y) \leq \alpha^{r(x)} G(p,y)$ and $c_1 G(x,p) \geq  \alpha^{-r(x)}$ with $c_1=c_1(r_0,M)>0$. Thus for $y \in T_k$
\begin{equation}
\label{Harnack exp}
G(x,y) \leq {c_1} \alpha^{2r(x)}G(x,p)G(p,y) \leq {c_1} \alpha^{(2k+2)R_0} G(x,p)G^\varepsilon(p,y).
\end{equation}

Fix $s \in (0,1]$ sufficiently small. Let $\tilde{s}$ and $A>2 R_0$ be constants to be determined later. We will construct a decreasing sequence $C_k=C_k^{(0)} \supset C_k^{(1)} \supset \cdots \supset C_k^{(m)} \supset C_{k-1}$ of cones satisfying

\begin{enumerate}
\item[(\textrm{i})] $d^S(C_k^{(i+1)},M \setminus C_k^{(i)})=s$ on $S_p(kR_0+A-i\tilde{s})$, $i=0,1, \cdots m-1$;
\item[(\textrm{ii})] $0 \leq d^S(C_{k-1},M \setminus C_k^{(m)})<s$ on  $S_p(kR_0+A-m\tilde{s})$;
\item[(\textrm{iii})] $G(x,y) \leq {c_1} \alpha^{2kR_0+2A-i\tilde{s}} G(x,p)G^\varepsilon(p,y)$ for all $y \in C_k^{(i)} \setminus B_p(1)$, $i=0,1, \cdots m$.
\end{enumerate}

Starting from $C_k^{(0)}$, set 
\begin{displaymath}
S_k^{(0)}(r)= C_k^{(0)} \cap S_p(r) \text{ for } r \in (0, \infty)
\end{displaymath}
and 
\begin{displaymath}
S_k^{(1)}(kR_0+A)= \{ q \in S_k^{(0)}(kR_0+A)| d^S(q, M \setminus C_k^{(0)}) > s\}.
\end{displaymath}
Now we can define the cone of $S_k^{(1)}(kR_0+A)$ by
\begin{displaymath}
C_k^{(1)}=\{x \in C_k^{(0)} | \gamma_{px} \cap S_k^{(1)}(kR_0+A) \neq \emptyset\},
\end{displaymath} 
where $\gamma_{px}$ is the geodesic ray starting from $p$ that passes through $x$.

Let $T_k^{(1)}=C_k^{(1)}\setminus B_p(1)$. We claim that there is a constant $\tilde{s}=\tilde{s}(M,s)$ such that
\begin{equation}
\label{T_K^1}
G(x,y) \leq {c_1} \alpha^{2kR_0+2A-\tilde{s}} G(x,p)G^\varepsilon(p,y) \text{ for } y \in T_k^{(1)}.
\end{equation}

In fact, when $y \in T_k^{(1)}\setminus B_p(kR_0+A-1)$, by Lemma \ref{spherical growth} we have $$d^S(y,M \setminus C_k^{(0)})\geq s',$$ where $s'$ is a constant depending only on $s$ and $M$. By Lemma \ref{spherical standard} there exists a constant $s''=s''(s',M)$ such that $d(y,M \setminus C_k^{(0)})>s''$. Now it is implied by Lemma \ref{measure comparison} that there is a constant $\delta=\delta(M,s'',\varepsilon)$ such that $\mu_{B_y(s'')}\leq (1-\delta)\mu^\varepsilon_{B_y(s'')}$. From \eqref{Harnack exp} we have for every $z\in \partial B_y(s'') \subset T_k,$
\begin{displaymath}
G(x,z) \leq {c_1} \alpha^{2kR_0+2A} G(x,p)G^\varepsilon(p,z).
\end{displaymath}
Integrating this inequality with respect to $\mathrm{d} \mu_{B_y(s'')}$  we obtain that
\begin{eqnarray}
\label{Harnack induction}
G(x,y) &=& \int G(x,z) \, \mathrm{d} \mu_{B_y(s'')}(z) \\
 &\leq &  {c_1} \alpha^{2kR_0+2A} \int G(x,p) \nonumber G^\varepsilon(p,z) \mathrm{d} \mu_{B_y(s'')}(z)\\
&\leq &  {c_1} \alpha^{2k R_0+2A} (1-\delta) G(x,p)  \int  G^\varepsilon(p,z) \mathrm{d} \mu^\varepsilon_{B_y(s'')}(z) \nonumber \\
&=& {c_1} \alpha^{2k R_0+2A} (1-\delta) G(x,p) G^\varepsilon(p,y). \nonumber 
\end{eqnarray}

On the other hand, when $y \in T_k^{(1)}\cap B_p(kR_0+A-1)$, by the Harnack inequality we have
$G(x,y) \leq G(x,p) \alpha^{r(y)}$ and $c_1 G(y,p) \geq\alpha^{-r(y)}$. Thus
\begin{equation}
\label{Harnack -1}
G(x,y) \leq {c_1} \alpha^{2r(y)}G(x,p)G(p,y) \leq {c_1} \alpha^{(2kR_0+2A-2)}G(x,p)G^\varepsilon(p,y).
\end{equation}
Combining \eqref{Harnack induction} and \eqref{Harnack -1}, the inequality \eqref{T_K^1} is proved for $\tilde{s}=\min\{-\dfrac{\log(1-\delta)}{\log\alpha},1\}$.

Now assume that we have constructed inductively $C_k^{(0)},\cdots,C_k^{(i)}$, set
\begin{displaymath}
S_k^{(i)}(r)= C_k^{(i)} \cap S_p(r) \text{ for } r \in (0, \infty)
\end{displaymath}
and 
\begin{displaymath}
S_k^{(i+1)}(kR_0+A-i\tilde{s})= \{ q \in S_k^{(i)}(kR_0+A-i\tilde{s})| d^S(q, M \setminus C_k^{(i)}) > s\}.
\end{displaymath}
Define $C_k^{i+1}$ to be the cone of $S_k^{(i+1)}(kR_0+A-i\tilde{s})$, i.e.,
\begin{displaymath}
C_k^{(i+1)}=\{x \in C_k^{(i)} | \gamma_{px} \cap S_k^{(i+1)}(kR_0+A-i\tilde{s}) \neq \emptyset\}.
\end{displaymath} 
it is easy to see the sequence $C_k^{(i)}$ satisfies condition $(\textrm{i})$,

Applying the argument in the proof of \eqref{T_K^1} to the cone $C_k^{(i+1)}$ we obtain inductively
\begin{equation}
\label{T_k^i}
G(x,y) \leq {c_1} \alpha^{2kR_0+2A-(i+1)\tilde{s}} G(x,p)G^\varepsilon(p,y) 
\end{equation}
for all $y \in C_k^{(i+1)} \setminus B_p(1)= T_k^{(i+1)}.$

We can continue this procedure provided $d^S(C_{k-1}, M \setminus C_k^{(i)}) \geq s$ on $B_p(kR_0+A-i\tilde{s})$. Eventually we get $m \in \mathbb{N}$ so that 
\begin{equation}
\label{spherical k}
0 \leq d^S(C_{k-1},  M \setminus C_k^{(m)}) < s \text{ on } B_p(kR_0+A-m\tilde{s}).
\end{equation}

If $A-m\tilde{s}<0$, then from \eqref{T_k^i} we obtain that
\begin{eqnarray*}
G(x,y) &\leq& {c_1} \alpha^{2kR_0+2A-m\tilde{s}} G(x,p)G^\varepsilon(y,p) \\
\nonumber &\leq& {c_1} \alpha^{(2k-2)R_0+2A} G(x,p)G^\varepsilon(y,p)
\end{eqnarray*}
for all $y \in C_k^{(m)} \setminus B_p(1) \subset T_{k-1}$.

If $A-m\tilde{s}>0$, using Lemma \ref{sector} we can estimate the spherical distance on $S_p(kR_0+A-m\tilde{s})$ as follows,
\begin{eqnarray}
\label{spherical sum}
d^S(C_{k-1}, M \setminus C_k^{(m)}) & \geq & d^S( C_{k-1}, M \setminus C_k) - d^S(C_k^{(m)},M \setminus C_k) \\
\nonumber & \geq & d^S( C_{k-1}, M \setminus C_k) - \Sigma_{i=1}^m d^S(C_k^{(i)},M \setminus C_k^{(i-1)})\\
\nonumber &\geq & \eta (A-m\tilde{s})-ms.
\end{eqnarray}
Combining \eqref{spherical k} and \eqref{spherical sum} we can choose $A$ large enough so that $m \geq 2 R_0/\tilde{s}$, it then follows from \eqref{T_k^i} that
\begin{equation*}
G(x,y) \leq {c_1} \alpha^{2kR_0+2A-m\tilde{s}} G(x,p)G^\varepsilon(p,y) \leq {c_1} \alpha^{(2k-2)R_0+2A} G(x,p)G^\varepsilon(p,y) 
\end{equation*}
for all $y \in C_k^{(m)} \setminus B_p(1) \subset T_{k-1}$. By repeating the argument above inductively for $j=k-1,k-2,\cdots,1$, the Lemma is proved for $c=c_1 \alpha^{2A}$.

\end{proof}

By Theorem \ref{density}, there exists an axis $\gamma$ of a hyperbolic axial isometry $\phi \in D$ such that $\gamma(-\infty),\gamma(\infty) \in T_p(\upsilon,\theta,1)$. We can choose $N\in \mathbb{N}$ large enough so that $\phi^N(M \setminus T_p(\upsilon,\theta,1)) \subset T_p(\upsilon,\theta,1)$. Set
$p_1=\phi^N(p)$, $\upsilon_1=\phi^N(\upsilon)$.
Since Green's function is invariant under isometries, we have
\begin{displaymath}
G(y,z) \leq c G(y,p_1)G^\varepsilon(p_1,z)
\end{displaymath}
for all $y \in T_{p_1}(-\upsilon_1,\pi/2,1)$ and $z \in T_{p_1}(\upsilon_1,\theta,1)$, where the constant $c$ is  as in Proposition \ref{proposition}.

We now proceed to establish the boundary Harnack inequality.

\begin{theorem}
\label{Harnack final}
For all $x\in T_p(-\upsilon,\pi/2,1)$ and all $y \in T_{p_1}(-\upsilon_1,\pi/2,1)$,
\begin{displaymath}
G(x,y) \leq cG(x,p)G(p,y),
\end{displaymath}
where $c$ is a constant independent of $x,y$.
\end{theorem}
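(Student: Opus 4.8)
The plan is to combine two applications of the weak boundary Harnack inequality, Proposition \ref{proposition} — one at $p$ and one, transported by the isometry $\phi^N$, at $p_1$ — and then to remove the perturbed Green's functions by exploiting that $d(p,p_1)$ is at our disposal.

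First I would pin down the position of $x$ and $y$ relative to $p$ and $p_1$. Since $\gamma(-\infty),\gamma(\infty)\in T_p(\upsilon,\theta,1)$, Theorem \ref{hyperbolic} applied both to $\phi$ and to $\phi^{-1}$ lets us fix one $N$ so large that simultaneously $p_1=\phi^N(p)\in T_p(\upsilon,\theta,1)$, $\phi^N\big(M\setminus T_p(\upsilon,\theta,1)\big)\subset T_p(\upsilon,\theta,1)$ and $\phi^{-N}\big(M\setminus T_p(\upsilon,\theta,1)\big)\subset T_p(\upsilon,\theta,1)$. Since $\phi^N$ is an isometry one has $T_{p_1}(\pm\upsilon_1,\cdot\,,1)=\phi^N\big(T_p(\pm\upsilon,\cdot\,,1)\big)$, and because the cones $C_p(\upsilon,\theta)$ and $C_p(-\upsilon,\pi/2)$ are disjoint, $T_p(-\upsilon,\pi/2,1)\subset M\setminus T_p(\upsilon,\theta,1)$. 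Hence for every admissible pair $(x,y)$:
\begin{enumerate}
\item[(i)] $y\in T_{p_1}(-\upsilon_1,\pi/2,1)$ forces $\phi^{-N}(y)\in T_p(-\upsilon,\pi/2,1)\subset M\setminus T_p(\upsilon,\theta,1)$, whence $y\in T_p(\upsilon,\theta,1)$;
\item[(ii)] $x\in T_p(-\upsilon,\pi/2,1)\subset M\setminus T_p(\upsilon,\theta,1)$ forces $\phi^{-N}(x)\in T_p(\upsilon,\theta,1)$, i.e. $x\in T_{p_1}(\upsilon_1,\theta,1)$.
\end{enumerate}
By (i), Proposition \ref{proposition} at $p$ applied to $(x,y)$ gives $G(x,y)\le c\,G(x,p)\,G^\varepsilon(p,y)$; by (ii), the $\phi^N$-transported Proposition at $p_1$ applied to the pair $(y,x)$ gives $G(x,y)=G(y,x)\le c\,G(y,p_1)\,G^\varepsilon(p_1,x)$.

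It remains to trade $G^\varepsilon$ for $G$. The point is that $p_1$ is now a \emph{fixed} point at a distance from $p$ that we have chosen: interior Harnack along $[p,p_1]$ gives $G(y,p_1)\le\alpha^{d(p,p_1)}G(y,p)$ together with the reverse bound, so in the second inequality $G(y,p_1)$ may be replaced by $G(p,y)$ up to a constant, and similarly $G^\varepsilon(p_1,x)$ is comparable to $G^\varepsilon(p,x)$. The surviving perturbed factors are then collapsed by iterating the two weak inequalities along the orbit $p,\phi^N(p),\phi^{2N}(p),\dots$: since $G^\varepsilon$ is $D$-invariant, each passage through one period of the orbit contributes the single fixed number $G^\varepsilon\big(p,\phi^N(p)\big)$, and $G^\varepsilon\big(p,\phi^N(p)\big)\to0$ as $N\to\infty$ because $\Delta+\varepsilon I$ is coercive; choosing $N$ large enough makes this number $<1/c$, so the loss constant $c$ of Proposition \ref{proposition} is absorbed and one arrives at $G(x,y)\le c'\,G(x,p)\,G(p,y)$ with $c'$ depending only on $M$ and the fixed data $R_0,\theta,\varepsilon,N$.

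The step I expect to be the main obstacle is exactly this last one, in particular the regime in which \emph{both} $x$ and $y$ escape to infinity: there neither weak inequality is by itself close to the desired bound, and one has to genuinely use the hyperbolic behaviour of $\phi$ between the two transversal walls $F$ and $\phi^N F$ — together with the comparison $G\le c(1-\delta)^{d}G^\varepsilon$ of Corollary \ref{measure comparison 2} and the fact that $G^\varepsilon(p_1,\cdot)$ is $\Delta$-superharmonic and so serves as a barrier in a maximum-principle argument on $M^+$ — to prevent the constants from degenerating as $x$ or $y$ goes to infinity. This is the place where, because of the flat plane $F$ (along which geodesics are not equi-hyperbolic), the argument has to adapt rather than merely quote Ancona's.
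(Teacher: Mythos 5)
Your preliminary set-up is sound: $y\in T_{p_1}(-\upsilon_1,\pi/2,1)$ does land in $T_p(\upsilon,\theta,1)$ by the choice of $\phi^N$, so Proposition~\ref{proposition} at $p$ gives $G(x,y)\le c\,G(x,p)G^\varepsilon(p,y)$, and the isometry-transported version at $p_1$ gives a companion estimate. But the step where you ``trade $G^\varepsilon$ for $G$'' is exactly the crux, and the mechanism you propose does not close. Multiplying the two weak inequalities leaves $G^\varepsilon(p,y)$ and $G^\varepsilon(p_1,x)$ evaluated at the \emph{variable} points $y,x$, not at the fixed pair $(p,p_1)$; there is no uniform bound of $G^\varepsilon(p,y)$ by $G(p,y)$ (by Corollary~\ref{measure comparison 2} the ratio $G^\varepsilon/G$ in fact grows exponentially in $d(p,y)$), so ``iterating along the orbit $p,\phi^N(p),\dots$'' cannot reduce to the single fixed number $G^\varepsilon(p,\phi^N(p))$. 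Likewise, taking $N$ so large that $c\,G^\varepsilon(p,p_1)<1$ is not needed and would not help: the Harnack constant $c$ never has to be absorbed into a quantity $<1$; one only needs the final constant to be finite.

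The argument the paper actually uses (following Ancona) is a potential-theoretic Fubini trick that you did not identify: write $G(x,y)=\int_{\partial T_p(\upsilon,\theta,1)}G(y,z)\,d\mu_x(z)$ using the harmonic measure $\mu_x$ of the intermediate cone; since $\partial T_p(\upsilon,\theta,1)\subset T_{p_1}(\upsilon_1,\theta,1)$, apply the transported Proposition~\ref{proposition} under the integral to get $G(x,y)\le c\,G(y,p_1)\int G^\varepsilon(p_1,z)\,d\mu_x(z)$; then use that $G^\varepsilon(p_1,\cdot)=G(\lambda)$ is a $\Delta$-potential and Fubini to rewrite $\int G^\varepsilon(p_1,z)\,d\mu_x(z)=\int G(\mu_x)(w)\,d\lambda(w)$; dominate $G(\mu_x)\le c\,G(x,p)G^\varepsilon(p,\cdot)$ via Proposition~\ref{proposition} at $p$ and the maximum principle; and observe that the remaining quantity $\int G^\varepsilon(p,w)\,d\lambda(w)=G^\varepsilon(p,p_1)+\varepsilon\,G^\varepsilon\bigl(G^\varepsilon_{p_1}\bigr)(p)$ is a finite constant depending only on $\varepsilon$, $d(p,p_1)$ and $M$. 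Your instinct to invoke a superharmonic barrier and the maximum principle is pointing in the right direction, but without the harmonic-measure representation and the Fubini swap the two weak inequalities do not combine, and no amount of hyperbolicity of $\phi$ between $F$ and $\phi^N F$ supplies the missing mechanism (that hyperbolicity is what underlies Proposition~\ref{proposition} itself, not its upgrade to Theorem~\ref{Harnack final}).
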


\begin{proof}
The proof is essentially the same as that of Theorem 1 in \cite{MR890161}. For the sake of completeness we sketch the proof.

Let $\mu_x$ be the harmonic measure supported on $\partial T_p(v,\theta,1)$. From the choice of $\phi^N$ we have $\partial T_p(v,\theta,1) \subset \phi^N(T_p(v,\theta,1))=T_{p_1}(v_1,\theta,1)$. By Proposition \ref{proposition} we have
\begin{eqnarray}
\label{harmonic integral}
G(x,y) &=& \int_{\partial T_p(v,\theta,1) } G(y,z) \mathrm{d} \mu_x(z)\\
&\leq& c G(y,p_1) \int_{\partial T_p(v,\theta,1)} G^\varepsilon(p_1,z)\mathrm{d} \mu_x(z) \nonumber
\end{eqnarray}

Since $G^\varepsilon(p_1,\cdot)$ is a $\Delta$-potential, there is a positive measure $\lambda$ on $M$ such that $G^\varepsilon(p_1,z)=G(\lambda)(z)$. By the Fubini theorem \eqref{harmonic integral} becomes
\begin{equation}
\label{Harnack integral}
G(x,y) \leq cG(y,p_1) \int_M G(\mu_x) \mathrm{d} \lambda.
\end{equation}

It follows from the definition of reduction and the maximum principle that
\begin{displaymath}
G(\mu_x)(z) \leq c G(x,p)G^\varepsilon(p,z) \text{ on } M,
\end{displaymath}
Together with \eqref{Harnack integral} we obtain
\begin{equation*}
G(x,y) \leq c^2 G(y,p_1)G(x,p) \int_M G^\varepsilon(p,z)\mathrm{d} \lambda.
\end{equation*}

By the Harnack inequality, $G(y,p_1) \leq c'G(p,y)$ for a constant $c'=c'(r(p_1),M)$. It remains to prove that $\int_M G^\varepsilon(p,z)\mathrm{d} \lambda$ is bounded. In fact, from \eqref{Green comparison} we have
\begin{equation*}
\lambda=-\Delta G^\varepsilon_{p_1}=\delta_{p_1}+\varepsilon G^\varepsilon_{p_1},
\end{equation*}
and it follows that
\begin{eqnarray*}
\int_M G^\varepsilon(p,z)\mathrm{d} \lambda(z) &=& G^\varepsilon(p,p_1)+\varepsilon G^\varepsilon(G^\varepsilon_{p_1})(p)
\end{eqnarray*}
is bounded by a constant depending only on $\varepsilon,r(p_1)$ and $M$. This completes the proof.
\end{proof}

\section{Proof of Theorem \ref{main}}

Let $g_t$ be the geodesic flow on the unit tangent bundle $SM$ and $\pi:SM \to M$ be the projection map.

To prove Theorem \ref{main}, we use the following reformulation of Theorem \ref{Harnack final}: there is an open set $V \subset SM$, such that for all $\omega \in V$ and $x \in T(-\omega,\theta,1), y \in T(\omega,\theta,1)$ we have
\begin{equation}
\label{reformulation}
G(x,y) \leq c G(x, \pi(\omega))G(\pi(\omega),y),
\end{equation}
where $\theta,c$ are independent of the choices of $x,y$ and $\omega$.

In fact, let $\omega_0 \in S_p$ be a unit tangent vector at $p$ such that $\gamma_{\omega_0}(-\infty) \in T_p(-\upsilon,\pi/2,1)$ and $\gamma_{\omega_0}(\infty) \in T_{p_1}(-\upsilon_1,\pi/2,1)$. Then we can choose $V$ to be a small neighborhood of $\omega_0$ in $SM$ and \eqref{reformulation} follows.

In addition, up to an open subset we may assume that for all $\omega \in V$,
\begin{equation}
\label{equi-hyperbolic}
C(g_T(\omega),\theta+\theta_0) \subset C(\omega,\theta),
\end{equation}
where $T,\theta_0$ are independent of $\omega$.

For $i \in \mathbb{N}$, set
\begin{equation*}
V_i=\{w \in S_p M |\; \exists t \geq i \text{ such that } g_t(w) \in DV\}.
\end{equation*}
Each $V_i$ is an open dense set in $S_p M$ and therefore $\bigcap_{i=0}^\infty V_i$ is a generic set. Let $V_i(\infty)=\{\gamma_\omega(\infty)|\omega \in V_i\}$. We prove that $R=\bigcap_{i=0}^\infty V_i(\infty)$ is the required generic set in Theorem \ref{main}.

For any $\omega \in \bigcap_{i=0}^\infty V_i$, we can find an infinite sequence of real numbers $t_0 \leq t_1 \leq \cdots$ so that $t_i \geq i$ and $g_{t_i}(\omega) \in DV$. Let $x_i=\pi(g_{t_i}(\omega))$, then $x_i \to \gamma_\omega(\infty)$.  From \eqref{reformulation}  we have
\begin{equation}
\label{Harnack T_i}
G(x,y) \leq c G(x,x_i) G(x_i,y)
\end{equation}
for all $x \in T_{x_i}(-g_{t_i}(\omega),\theta,1), y \in T_{x_i}(g_{t_i}(\omega),\theta,1)$.

Recall that every fundamental sequence $Y=\{y_k\}$ in $M$ corresponds to  a limiting harmonic function
\begin{displaymath}
h_Y(x)=\lim_{k \to \infty} \dfrac{G(x,y_k)}{G(p,y_k)} \text{ with } h_Y(p)=1.
\end{displaymath}

By the Arzela-Ascoli Theorem, up to a subsequence $h_{x_k}$ converge to a nonnegative harmonic function $h_\omega$. We want to prove that all the limiting functions of fundamental sequences converging to $\gamma_\omega(\infty)$ are identical to $h_\omega$. To this end, let $Y=\{y_k\}$ be an arbitrary fundamental sequence in $M$ converging to $\gamma_\omega(\infty)$. Given $i \in \mathbb{N}$, it follows from \eqref{Harnack T_i} that for $k$ sufficiently large,
\begin{equation}
\label{double 1}
c^{-1}G(x,x_i) \leq \dfrac{G(x,y_k)}{G(x_i,y_k)} \leq c G(x,x_i)
\end{equation}
on $T_{x_i}(-g_{t_i}(\omega),\theta,1)$. In particular, for $x=p$,
\begin{equation}
\label{double 2}
c^{-1}G(p,x_i) \leq \dfrac{G(p,y_k)}{G(x_i,y_k)} \leq c G(p,x_i).
\end{equation}

Combining \eqref{double 1} and \eqref{double 2} we obtain
\begin{equation*}
c^{-2}\dfrac{G(x,x_i)}{G(p,x_i)} \leq \dfrac{G(x,y_k)}{G(p,y_k)} \leq c^2 \dfrac{G(x,x_i)}{G(p,x_i)}
\end{equation*}
on $T_{x_i}(-g_{t_i}(\omega),\theta,1)$. Taking the limit as $k \to \infty$ we obtain
\begin{equation}
\label{c^-2}
c^{-2}\dfrac{G(x,x_i)}{G(p,x_i)} \leq h_Y(x) \leq c^2 \dfrac{G(x,x_i)}{G(p,x_i)}
\end{equation}
for all $x \in T_{x_i}(-g_{t_i}(\omega),\theta,1)$. Observe that for $h_\omega$ we have the same estimate, hence
\begin{equation}
\label{equi}
c^{-4} \leq \dfrac{h_Y(x)}{h_\omega(x)} \leq c^4 \text{ on } M=\bigcup_{i=0}^\infty T_{x_i}(-g_{t_i}(\omega),\theta,1) .
\end{equation}

Now consider the reduction $u_{k}$ of $h_Y$ on $B_{x_k}(1)$, i.e.,
\begin{equation*}
u_k=\inf \{s:M \to \mathbb{R}~| ~s>0 \text{ and superharmonic on } M; s \geq h_Y \text{ on } B_{x_k}(1) \}.
\end{equation*} 
 By \eqref{c^-2} and the Harnack inequality we have for $z \in \partial B_{x_k}(1)$, 
\begin{equation*}
u_k(z) \geq h_Y(z) \geq c_1 \dfrac{1}{G(p,x_k)},
\end{equation*}
where $c_1>0$ depends only on $M$ and $c$. On the other hand,
\begin{equation*}
h_{x_k}(z)=\dfrac{G(z,x_k)}{G(p,x_k)} \leq c_2 \dfrac{1}{G(p,x_k)}
\end{equation*}
on $\partial B_{x_k}(1)$, where $c_2$ depends only on $M$. Since on $M \setminus B_{x_k}(1)$, $u_k$ is superharmonic and $h_{x_k}$ is harmonic, by the maximum principle we obtain
\begin{equation*}
h_Y(x) \geq u_k(x) \geq c_3 h_{x_k}(x)
\end{equation*}
on $M \setminus B_{x_k}(1)$, where $c_3$ depends only on $M, c$. Taking $k \to \infty$ yields
\begin{equation*}
h_Y(x) \geq \lim_{k\to \infty} \inf u_k(x) \geq c_3 h_{\omega}(x) \text{ on } M,
\end{equation*}
together with \eqref{equi} we have
\begin{equation*}
\lim_{k\to \infty} \inf u_k(x) \geq c_4 h_Y
\end{equation*}
on $M$ with $c_4=c_3 c^{-4}>0$.

Let
\begin{equation*}
\alpha_0=\sup \{ \alpha | h_\omega -\alpha h_Y \geq 0 \text{ on } M\},\;\; \beta_0 = \inf \{ \beta| h_\omega - \beta h_Y \leq 0 \text{ on } M \}.
\end{equation*}
We have $0 \leq \alpha_0 \leq \dfrac{h_\omega}{h_Y} \leq \beta_0$ on $M$.

To prove the theorem, it is sufficient to show that $\alpha_0=\beta_0$. If $\alpha_0 < \beta_0$, by replacing $h_\omega$ by a linear combination of $h_\omega$ and $h_Y$, we may assume that $\alpha_0=0$ and $\beta_0=1$.

For $z \in \partial B_{x_k}(1)$, by the Harnack inequality we have
\begin{equation*}
h_\omega(z) \geq c_5 h_\omega(x_k), \;\; u_k(z) \leq c_6 h_Y(x_k),
\end{equation*}
where $c_5, c_6$ depend only on $M$. It then follows from the maximum principle that
\begin{equation*}
h_\omega \geq c_7 u_k \dfrac{h_\omega(x_k)}{h_Y(x_k)}
\end{equation*}
on $M \setminus B_{x_k}(1)$, where $c_7=c_5/c_6$. Taking the limit as $k \to \infty$ we have
\begin{eqnarray*}
h_\omega &\geq& c_7   \lim_{k\to \infty} \inf u_k \lim_{k\to \infty} \sup \dfrac{h_\omega(x_k)}{h_Y(x_k)} \\
&\geq& c_4 c_7 h_Y \lim_{k\to \infty} \sup \dfrac{h_\omega(x_k)}{h_Y(x_k)}
\end{eqnarray*}
on $M$. Since $\alpha_0=\sup \{ \alpha | h_\omega -\alpha h_Y \geq 0 \text{ on } M\}=0$, we must have $\lim_{k\to \infty} \sup \dfrac{h_\omega(x_k)}{h_Y(x_k)}=0$.

On the other hand, if we repeat the argument above for the positive function $g=h_Y-h_\omega$, we have
\begin{equation*}
\sup \{ \alpha | g -\alpha h_Y \geq 0 \text{ on } M\}=0; \;\; \inf \{ \beta| g - \beta h_Y \leq 0 \text{ on } M \},
\end{equation*}
and
\begin{eqnarray*}
g &\geq& c_4 c_7 h_Y \lim_{k\to \infty} \sup \dfrac{g(x_k)}{h_Y(x_k)}  \\
&=& c_4 c_7 h_Y \lim_{k\to \infty} \sup \dfrac{h_Y(x_k)-h_\omega(x_k)}{h_Y(x_k)}  \\
&=& c_4 c_7 h_Y \lim_{k\to \infty} \sup (1-\dfrac{h_\omega(x_k)}{h_Y(x_k)}),
\end{eqnarray*}
which implies $\lim_{k\to \infty} \sup (1-\dfrac{h_\omega(x_k)}{h_Y(x_k)})=0$, or equivalently, $\lim_{k\to \infty} \inf \dfrac{h_\omega(x_k)}{h_Y(x_k)}=1$. Contradiction!

Therefore $h_Y$ must be a multiple of $h_\omega$. Since $h_Y(p)=h_\omega(p)=1$, we have $h_Y=h_\omega$ on $M$. Hence there is a unique limiting function $h_\xi$ for all fundamental sequences converging to $\xi \in R$. It follows from \eqref{equi-hyperbolic} that $h$ vanishes on $M(\infty) \setminus \{\xi\}$. This completes the proof of Theorem \ref{main}.

\bibliographystyle{amsalpha}
\bibliography{rank-1} 

\providecommand{\bysame}{\leavevmode\hbox to3em{\hrulefill}\thinspace}
\providecommand{\MR}{\relax\ifhmode\unskip\space\fi MR }
\providecommand{\MRhref}[2]{%
  \href{http://www.ams.org/mathscinet-getitem?mr=#1}{#2}
}
\providecommand{\href}[2]{#2}
\begin{thebibliography}{BGS85}

\bibitem[Anc87]{MR890161}
Alano Ancona, \emph{Negatively curved manifolds, elliptic operators, and the
  {M}artin boundary}, Ann. of Math. (2) \textbf{125} (1987), no.~3, 495--536.
  \MR{890161}

\bibitem[AS85]{MR794369}
Michael~T. Anderson and Richard Schoen, \emph{Positive harmonic functions on
  complete manifolds of negative curvature}, Ann. of Math. (2) \textbf{121}
  (1985), no.~3, 429--461. \MR{794369}

\bibitem[Bal82]{MR656659}
Werner Ballmann, \emph{Axial isometries of manifolds of nonpositive curvature},
  Math. Ann. \textbf{259} (1982), no.~1, 131--144. \MR{656659}

\bibitem[Bal89]{MR990144}
\bysame, \emph{On the {D}irichlet problem at infinity for manifolds of
  nonpositive curvature}, Forum Math. \textbf{1} (1989), no.~2, 201--213.
  \MR{990144}

\bibitem[Bal00]{MR1847509}
\bysame, \emph{The {M}artin boundary of certain {H}adamard manifolds},
  Proceedings on {A}nalysis and {G}eometry ({R}ussian) ({N}ovosibirsk
  {A}kademgorodok, 1999), Izdat. Ross. Akad. Nauk Sib. Otd. Inst. Mat.,
  Novosibirsk, 2000, pp.~36--46. \MR{1847509}

\bibitem[BGS85]{MR823981}
Werner Ballmann, Mikhael Gromov, and Viktor Schroeder, \emph{Manifolds of
  nonpositive curvature}, Progress in Mathematics, vol.~61, Birkh\"auser
  Boston, Inc., Boston, MA, 1985. \MR{823981}

\bibitem[BL94]{MR1269841}
Werner Ballmann and Fran\c{c}ois Ledrappier, \emph{The {P}oisson boundary for
  rank one manifolds and their cocompact lattices}, Forum Math. \textbf{6}
  (1994), no.~3, 301--313. \MR{1269841}

\bibitem[BO69]{MR0251664}
R.~L. Bishop and B.~O'Neill, \emph{Manifolds of negative curvature}, Trans.
  Amer. Math. Soc. \textbf{145} (1969), 1--49. \MR{0251664}

\bibitem[Bre71]{MR0281940}
Marcel Brelot, \emph{On topologies and boundaries in potential theory},
  Enlarged edition of a course of lectures delivered in 1966. Lecture Notes in
  Mathematics, Vol. 175, Springer-Verlag, Berlin-New York, 1971. \MR{0281940}

\bibitem[Bro82]{MR634438}
Robert Brooks, \emph{Amenability and the spectrum of the {L}aplacian}, Bull.
  Amer. Math. Soc. (N.S.) \textbf{6} (1982), no.~1, 87--89. \MR{634438}

\bibitem[Ebe72]{MR0295387}
Patrick Eberlein, \emph{Geodesic flow in certain manifolds without conjugate
  points}, Trans. Amer. Math. Soc. \textbf{167} (1972), 151--170. \MR{0295387}

\bibitem[EO73]{MR0336648}
P.~Eberlein and B.~O'Neill, \emph{Visibility manifolds}, Pacific J. Math.
  \textbf{46} (1973), 45--109. \MR{0336648}

\bibitem[Kni97]{MR1465601}
G.~Knieper, \emph{On the asymptotic geometry of nonpositively curved
  manifolds}, Geom. Funct. Anal. \textbf{7} (1997), no.~4, 755--782.
  \MR{1465601}

\bibitem[Mar41]{MR0003919}
Robert~S. Martin, \emph{Minimal positive harmonic functions}, Trans. Amer.
  Math. Soc. \textbf{49} (1941), 137--172. \MR{0003919}

\bibitem[Sch90]{MR1050413}
Viktor Schroeder, \emph{Codimension one tori in manifolds of nonpositive
  curvature}, Geom. Dedicata \textbf{33} (1990), no.~3, 251--263. \MR{1050413}

\bibitem[Yau75]{MR0431040}
Shing~Tung Yau, \emph{Harmonic functions on complete {R}iemannian manifolds},
  Comm. Pure Appl. Math. \textbf{28} (1975), 201--228. \MR{0431040}

\bibitem[Yau93]{MR1216573}
Shing-Tung Yau, \emph{Open problems in geometry}, Differential geometry:
  partial differential equations on manifolds ({L}os {A}ngeles, {CA}, 1990),
  Proc. Sympos. Pure Math., vol.~54, Amer. Math. Soc., Providence, RI, 1993,
  pp.~1--28. \MR{1216573}

\end{thebibliography}

\Addresses

\end{document}